\titleformat{\section}[block]{\needspace{8\baselineskip}\bigskip\centering\large\bfseries}{\thesection}{1em}{\vspace{3ex}}
\titleformat{\subsection}[block]{\needspace{4\baselineskip}\centering\normalsize \bfseries}{\thesubsection}{1em}{\vspace{2ex}}
\titleformat{\subsubsection}[block]{\normalfont\em}{\thesubsubsection.}{2em}{\hspace{2em}}
\newcommand{\co}{{\mathbb C}}
\newcommand{\re}{{\mathbb R}}
\newcommand{\n}{{\mathbb N}}
\newcommand{\cA}{{\mathcal{A}}}
\newcommand{\cT}{{\mathcal{T}}}
\newcommand{\cD}{{\mathcal{D}}}
\newcommand{\bx}{{\boldsymbol{x}}}
\newcommand{\by}{{\boldsymbol{y}}}
\newcommand{\be}{{\boldsymbol{e}}}
\newcommand{\bh}{{\boldsymbol{h}}}
\newcommand{\bp}{{\boldsymbol{p}}}
\newcommand{\ba}{{\boldsymbol{a}}}
\newcommand{\bb}{{\boldsymbol{b}}}
\newcommand{\bc}{{\boldsymbol{c}}}
\newcommand{\bv}{{\boldsymbol{v}}}
\newcommand{\bu}{{\boldsymbol{u}}}
\newcommand{\bw}{{\boldsymbol{w}}}
\newcommand{\norm}[1]{\left\lVert#1\right\rVert}
\newcommand{\vardot}{\mathord{\,\cdot\,}} %\let\vardot\vardot
\newtheorem{thm}{Theorem}[section]
\numberwithin{thm}{section}
\newtheorem{prop}[thm]{Proposition}
\newtheorem{lma}[thm]{Lemma}
\newtheorem{cor}[thm]{Corollary}
\newtheorem{rmk}[thm]{Remark}
\newtheorem{ex}[thm]{Example}
\newtheorem{defi}[thm]{Definition}
\newtheorem{conj}[thm]{Conjecture}
\providecommand{\customgenericname}{}
\newcommand{\newcustomtheorem}[2]{%
  \newenvironment{#1}[1]
  {%
   \renewcommand\customgenericname{#2}%
   \renewcommand\theinnercustomgeneric{##1}%
   \innercustomgeneric
  }
  {\endinnercustomgeneric}
}
\date{}
\author{Thomas~Mejstrik%
\thanks{University of Vienna, Austria {e-mail: \tt\small  thomas.mejstrik@gmx.at}}
\ and
Vladimir~Yu.~Protasov%
\thanks{DISIM, University of L'Aquila,  {e-mail: \tt\small v-protassov@yandex.ru}}}
\title{Elliptic polytopes and invariant norms \\ of linear operators%
\thanks{The first author 
is sponsored by the Austrian Science Foundation (FWF) grant P~33352.
The second author is supported by the RFBR grants 19-04-01227
and 20-01-00469}}
\begin{document}

%\authorrunning{Short form of author list} % if too long for running head

%\date{Received: date / Accepted: date}

\maketitle 

\begin{abstract}

We address the problem of constructing elliptic polytopes in~$\re^d$, which are  
convex hulls of finitely many two-dimensional ellipses with a common center. 
Such sets  arise in the study of 
spectral properties of matrices, 
asymptotics of   long matrix products, 
in the Lyapunov stability, etc. 
The main issue in the construction is to decide 
whether a given ellipse is in the convex hull of others.
The computational complexity of this 
problem is analysed by considering an equivalent optimisation problem. 
We show that the number of local extrema of that problem may grow exponentially in~$d$. 
For $d=2, 3$, it admits an explicit solution for an arbitrary number of ellipses; 
for higher dimensions,  several geometric methods for approximate solutions are derived. 
Those methods are analysed  numerically and 
their efficiency is demonstrated in applications.
\medskip 

\noindent \textbf{Keywords}: {\em Lyapunov function,  norm, convex hull, ellipse, 
discrete time linear system, 
Schur stability, joint spectral radius, projection, corner cutting, complexity}

\begin{flushright}
\noindent \textbf{AMS 2020} {\em Mathematical Subject
classification: 
52A21, 
39A30, 
15A60, 
90C90
}
\end{flushright}

\end{abstract}

\section{Introduction}\label{sec_introduction}  % Section 1

Convex hulls of two-dimensional ellipses in~$\re^d$ 
are applied in the evaluation  of Lyapunov functions and  of extremal norms of linear 
operators, in the study of stability of discrete-time linear systems and in the 
computation of the joint spectral radius. The construction of such convex hulls is computationally hard, especially in high dimensions.  
It is reduced to the following question: to decide whether a given ellipse 
is contained in the convex hull of other given ellipses.  
We study the complexity and suggest several methods of its approximate solution. 

Note that a solution merely by approximating each ellipse with a polygon is extremely 
inefficient and is hardly realisable if we want a good precision. 
That is why the problem requires other approaches based on various geometric ideas.  
The paper is concluded with  numerical results and  applications.

\begin{defi}\label{d.10}
An elliptic polytope in~$\re^d$ is a convex hull of several two-dimensional 
ellipses centred at the origin. 
Those ellipses which are not in the convex hull of the others 
are called vertices of the elliptic polytope.  
\end{defi}
An ellipse can be degenerate, in which case it is 
a segment centred at the origin. So, every (usual) polytope symmetric about the origin is 
also an elliptic polytope. We usually define an  ellipse either by a pair of vectors 
$\ba, \bb \in \re^d$ as the set of points 
$\ba \cos t + \bb \sin t\, , \, t \in \re$ and denote it as~$E(\ba, \bb)$,
or by a complex vector~$\bv = \ba \,+\, i\, \bb \in \co^d$, 
and denote it as~$E(\bv)\, = \, E(\ba, \bb)$,
where $\ba = {\rm Re}\, \bv, \, \bb = {\rm Im}\, \bv$ are real and imaginary parts of~$\bv$, respectively.

Two complex vectors~$\bv_1, \bv_2\in\co^d$ 
define the same ellipse if either~$\bv_2 = z\, \bv_1$ or $\bv_2 = z\, \bar \bv_1$
for some~$\, z\in \co, \, |z| = 1$.

\subsection{Motivation}
\label{sec_motivation}  % Section 1.1

Construction of elliptic polytopes arise naturally in the study of 
spectral properties of matrices, 
of asymptotics of long matrix products, 
the stability of linear dynamical systems, and in  related problems. 
Below we consider some of these applications. 

\subsubsection*{Application 1.~Norms in $\co^d$ restricted to~$\re^d$}

It is well-known that every convex body in~$\re^d$ symmetric about the 
origin generates a norm in~$\re^d$,
called its Minkowski norm. 
In contrast, not every convex body in~$\co^d$ (identified with~$\re^{2d}$) 
defines a norm in~$\co^d$. Such bodies have a  particular structure:  
if~$S$ is a unit sphere of a norm~$\norm{\vardot}$ in~$\co^d$, 
then for every $\bv \in S$ the curve $\{e^{-i s } \bv \, | \, s \in \re\}$
lies on~$S$. Indeed, $\norm{e^{-i s } \bv} = \norm{\bv} = 1$.  
Note that the  real  part of the point $\bv(s)$
runs over the ellipse~$E(\bv)$ as $s\in \re$. 
Thus, a unit ball of a norm in~$\re^d$ induced by an arbitrary complex norm is a convex hull 
of a (possibly infinite) set of ellipses. 
In particular, a piecewise linear approximation of a norm in the complex space 
is a {\em balanced complex polytope} (see Definition~\ref{d.20})
with real and imaginary parts being elliptic polytopes. Therefore, elliptic polytopes
are the real and imaginary part of a polyhedral approximation for unit balls of norms in~$\co^d$.

\subsubsection*{Application 2.~Lyapunov functions for linear dynamical systems}

For a discrete time linear system of the form $\bx(k+1) = A\bx(k), \, k \ge 0$, 
where $A$ is a constant $d\times d$ matrix, an important issue is a construction of a 
Lyapunov function~$f(\bx)$, for which $f(A\bx) \le \, \lambda\, f(\bx), \, \bx \in \re^d$.
  If such a function exists for $\lambda = 1$, then the system is stable, if it exists for 
  $\lambda < 1$, then it is asymptotically stable. 
  A Lyapunov function provides  
  a detailed information on the asymptotic behaviour of the trajectories $\bx(k)$
  as $k\to \infty$. A standard approach is to find a quadratic Lyapunov function 
  $f(\bx) = \sqrt{\bx^T M \bx}$, where $M$ is a positive definite matrix. 
  By the Lyapunov theorem such a matrix~$M$ exists whenever $\rho(A) < 1$, where $\rho$
  is the spectral radius (maximal modulus of eigenvalues).  
  The quadratic Lyapunov function  can be found either by solving 
  a semidefinite programming problem $A^TMA \prec M$
  or by finding all eigenvectors of~$A$ (for the sake of simplicity we assume that 
  $A$ does not have multiple eigenvalues). In high dimensions, however,  
  both those methods become hard. In this case one should 
  consider  Lyapunov functions from other classes, for example, 
  from the class of polyhedral functions. To construct a polyhedral Lyapunov function
  one needs to to find a polytope~$P$ such that $AP \subset P$. 
  Such a polytope can be constructed iteratively starting with an arbitrary 
  polytope $P_0$ and running the process $P_{k+1} = {\rm co}\, \{AP_k,  P_k\}$,
  where ${\rm co}$ denotes the convex hull.
  When $P_{n+1} = P_n$ the algorithm halts and we set $P = P_n$.  
   However, if $\rho(A)$  is close to one,
   then the number of vertices of~$P_n$ may become very large. 
   This can be avoided by  including the leading eigenvector~$\bv$ of~$A$ 
   in the set of vertices of~$P_0$. 
   If  $\bv$ is not real, then $P_0$ is replaced by an elliptic polytope: 
   a convex hull of~$E(\bv)$ with several other vertices. 
   In this case, all~$P_k$ and the final polytope~$P$ will be elliptic. 
   Thus, the iterative algorithm 
   with elliptic polytopes constructs the desired Lyapunov function.    

\subsubsection*{Application 3.~Computation of the joint spectral radius}
 
  This is one of the most important applications of elliptic 
  polytopes. 
  The joint spectral radius of matrices is the maximal rate of 
  asymptotic growth of norms of their long products.
  For an arbitrary family  $\cA = \{A_1, \ldots , A_m\}$ of $d\times d$ matrices,
  the joint spectral radius (JSR) is the limit
 \begin{equation}
 \rho(\cA) \ = \ \lim_{k \to \infty}\ \max_{A(j)\in \cA} \ \big\|A(k)\ldots A(1)\big\|^{1/k}\, .
 \end{equation}
 Originated with J.\,K.~Rota and G.~Strang in 1960 
 the joint spectral radius found numerous applications, 
 see~\cite{CCS2005}\cite{GP13}\cite{Jung2009} for surveys.  The computation of the joint spectral radius, 
 even approximate, is a hard problem. 
 The \emph{Invariant polytope algorithm} 
 introduced in~\cite{GP13} makes it possible to find a precise value 
 of $\rho(\cA)$ for a vast majority of matrix families. 
 Its idea traces back to~\cite{GWZ}\cite{P96}. Recent works~\cite{GP16}\cite{P17}\cite{Mej2020}
 develop updated versions of that algorithm which efficiently perform computations 
 in dimensions up to $d=25$ for arbitrary matrices and up to several thousands 
 for nonnegative matrices. 
 The main idea of the Invariant polytope algorithm is the following: 
 First, we find  a candidate for the spectrum maximizing product $\Pi$ 
 of matrices from~$\cA$, for which the value 
 $\lambda = \rho(\Pi)^{1/|\Pi|}$  is maximal, where $|\Pi|$ denotes the length of
 the product~$\Pi$. 
 We make an assumption that the leading eigenvalue of~$\Pi$ is unique and simple.  
 Then we construct an 
 {\em extremal norm} $\norm{\vardot}$ in $\re^d$ such that  
 $\norm{A\bx} \le \lambda \norm{\bx}$ for all~$A\in \cA$, $\bx \in \re^d$. 
 Once such a norm is found, we have proved that~$\rho(\cA) = \lambda$. 
 If $\lambda \in \re$, then the  extremal norm is  constructed iteratively, 
 starting with the leading 
 eigenvector~$\bv$ 
 of~$\Pi$, considering its~$m$ images~$\lambda^{-1}A\bv , \ A\in \cA$, 
 then constructing their~$m^2$ images, etc..
 To avoid the exponential growth of the number of points we  
 remove all redundant points (those in the convex hull of others) in each iteration. 
 If this process halts after several iterations (no new points appear), then 
 the convex hull of the collected points forms an {\em invariant polytope}~$P$, 
 for which $AP \subset \lambda P$ for all $A\in \cA$. 
 The Minkowski norm of this polytope is extremal. 
 If the leading eigenvalue of~$\Pi$
 is nonreal, then $P$ can be found as a balanced complex polytope (Definition~\ref{d.20})
 by the same iterative procedure
 starting with complex leading eigenvector~$\bv$. 
 This approach was elaborated in~\cite{GWZ}\cite{GZ07}\cite{GZ09} 
 and showed its efficiency for the JSR computation.
 There were, however, some disadvantages. First of all, 
 this method was mostly applied in low dimensions. 
 Second, for matrix families with complex leading eigenvalue
 %(all matrices are supposed to be real)  
 this algorithm suffers, since the uniqueness of the leading eigenvalue assumption 
 is violated. 
 Indeed, in the latter case the complex conjugate number 
 $\bar \lambda $ is also a leading eigenvalue. 
 
 We modify this method by using elliptic polytopes instead 
 of balanced complex polytopes.  
 The process starts with the ellipse
 $E(\bv)$, then in each iteration we 
 compute  the images of the previous ellipses, and remove the 
 redundant ones. 
 Thus, in the case of a complex leading eigenvalue, 
 the joint spectral radius can be found by the 
 iterative construction of an invariant elliptic polytope. 
 Usually the elliptic polytope has much less number of vertices (ellipses) 
 than the balanced complex polytope, 
 which allows to speed up its convergence and to apply it in higher dimensions. 
 See Section~\ref{sec_applications} for details. 
 
\subsubsection*{Application 4. Stability of linear switching systems}
 
The extremal norm~$\norm{\vardot}$ constructed above 
by an elliptic polytope plays not only an auxiliary role 
 for computing the joint spectral radius.  
 It is also an independent interest as a Lyapunov function 
 for the discrete time {\em linear switching system} 
$\bx(k+1) = A(k)\bx(k), \,  A(k) \in \cA, \, k \ge 0$, 
see~\cite{B88}\cite{GL15}\cite{G95}\cite{K07}\cite{PW08}\cite{W02}.  
 For this system, $\rho(\cA)$ has the meaning of the Lyapunov exponent, and the extremal 
 norm~$\norm{\vardot}$ is a Lyapunov function. 
 Thus, the Lyapunov function of a discrete time linear switching system 
 is constructed as the Minkowski functional of an elliptic polytope.

\subsection{The statement of the problem}\label{sec_problemstatement}  % Section 1.2
  
The following problem, which will be referred to as Problem~\ref{problem_ee}
(\emph{ellipse in ellipses}) is crucial in constructing and studying  elliptic polytopes. 
\begin{customproblem}{EE}  % Problem 1
\label{problem_ee}
For given ellipses~$E_0, \ldots , E_N$ in~$\re^d$,
decide whether $\, E_0 \subset {\rm co}\, \{E_1, \ldots , E_N\}$.  
\end{customproblem}

An efficient solution to Problem~\ref{problem_ee} makes it possible 
to ``clean'' every set of ellipses 
removing  redundant ones and  leaving only the vertices of an 
elliptic polytope containing all others. 
All the  aforementioned  applications in Section~\ref{sec_introduction} 
are based on the use of Problem~\ref{problem_ee}. 

Concerning Application~1, an arbitrary norm in~$\co^d$ can be approximated  by a polyhedral 
norm~$\norm{\bx}= \max_{j}|(\bv_j, \bx)|$. Consider the restriction of this norm to~$\re^d$. 
Solving Problem~\ref{problem_ee} 
one removes redundant vectors~$\bv_k$.
A term~$|(\bv_k, \bx)|$ is redundant if and only if the ellipse~$E(\bv_k)$ 
is contained in the convex hull of the others~$E(\bv_j)$, $j\ne k$.

In the other applications,
solving Problem~\ref{problem_ee} also plays a major role. 
In the iterative construction of the Lyapunov functions 
and in the Invariant polytope algorithms, the 
removal  of redundant ellipses in each iteration prevents 
the exponential growth of the number of ellipses and actually 
makes those algorithms applicable. Moreover, reducing the number of ellipses makes 
the Lyapunov function simpler and more convenient for applications.  

Our second topic is the algorithmically implementation of the solution of Problem~\ref{problem_ee}.
In particular, we aim to modify the algorithm of the JSR computation (Application~3) 
by using elliptic polytopes instead of  complex polytopes. 
The same construction will be applied for finding invariant Lyapunov  
functions for switching systems (Application~4).

\subsection{Possible approaches}\label{sec_approacher}  % Section 1.3

An analogue to Problem~\ref{problem_ee} for usual
polytopes is solved by the standard linear programming technique.  
For elliptic polytopes, we are not aware of any known method. 
To the best of our knowledge, the only problem considered in the literature, 
which is related to Problem~\ref{problem_ee}, 
is the construction of a  balanced complex polytope. 
This technique was developed in~\cite{GP13}\cite{GWZ}\cite{GZ07}\cite{GZ09}\cite{GZ15} 
for finding extremal Lyapunov functions in~$\co^d$ 
and for computing the joint spectral radius.
It is based on the following fact: 
 An ellipse~$E(\bv_0)$ 
is contained 
in the convex hull~ ${\rm co}\, \{E(\bv_1), \ldots , E(\bv_N)\}$
if there exist complex numbers $z_k$ such that $\bv_0 = \sum_{k=1}^m z_k \bv_k$
and $\sum_{k=1}^m  |z_k| \le 1$.  
This condition, however, is only sufficient but not necessary. 
Moreover, 
it turns out that for solving Problem~\ref{problem_ee}, this method
gives a rather rough approximate solution. 
We are going to show that its approximation  
factor is~$\nicefrac{1}{2}$ and this estimate is tight (Theorems~\ref{th.20} and
\ref{th.30} in Section~\ref{sec_approximatesolutions}). Moreover, it works only if 
we add the complex conjugate vectors $\bar \bv_k$ to the set of vectors~$\bv_k$, 
otherwise the approximation factor is zero.
I.e.~we will not obtain even an approximate solution. 
This aspect has been missed in the recent literature on the joint spectral radius computation. 

Natural  questions arise -- How to get a precise solution of Problem~\ref{problem_ee}
and what is the complexity of this problem?
What could be done to obtain approximate solutions with better approximation factors?
Having answered those questions one can speed up the Invariant polytope algorithm 
for the joint spectral radius computation,
construct extremal Lyapunov functions for discrete time systems 
that would be  easier to define and to compute than those presented in the literature, 
and address other  applications.

\subsection{The main results and the structure of the paper}\label{sec_structure}  % Section 1.4

In Section~\ref{sec_preliminary} we give necessary definitions, 
notation, and formulate auxiliary facts.  
In Section~\ref{sec_equivalentproblem} we rewrite Problem~\ref{problem_ee} 
in the optimisation form and study its complexity.  
The problem is highly nonconvex and, therefore, can be hard. 
Indeed, we show that it is not simper than the problem of maximising 
a quadratic form of rank 2 over a centrally symmetric polyhedron. 
We conjecture that the latter problem is NP-hard. 
An argument for that is established in~Theorem~\ref{th.8},
 a positive semidefinite quadratic form of rank 2 in~$\re^k$ 
under~$O(k)$ linear constraints may have $2^k$ points of local maxima. 

In Section~\ref{sec_lowdimensions} we show that in low dimensions 
Problem~\ref{problem_ee} admits precise solutions. 
In general, if the dimension is fixed, then the problem has a polynomial 
(in the number of ellipses) solution, although hardly realizable for $d\ge 4$. 
For higher dimensions we can deal with approximate solutions only 
(Section~\ref{sec_approximatesolutions}). 

In Section~\ref{sec_methodB} we analyse the {\em complex polytope method} 
for solving Problem~\ref{problem_ee}. 
Its idea is close to those originated with Guglielmi, Zennaro, and Wirth~\cite{GWZ}\cite{GZ07}\cite{GZ09}. 
By this method we  reduce Problem~\ref{problem_ee}
to a conic programming problem.
First we observe one aspect missed in the literature: 
This method does not work, unless we add complex conjugate vectors to all given vectors
(Proposition~\ref{p.17}). 
After this slight modification, 
the method becomes applicable and gives an approximate solution to Problem~\ref{problem_ee} 
with an approximation factor of at least $\nicefrac{1}{2}$. 
This is shown in~Theorem~\ref{th.20}. 
This factor, in general,  cannot be improved as shown in Theorem~\ref{th.30}.
Certainly, for some initial data the approximation can be sharper. 
However, the empirical estimate obtained for random elliptic polytopes 
gives the expected value of the approximation factor around~$\nicefrac{1}{\sqrt{2}}$,
 which is also quite rough. 
The corresponding numerical results are presented in Section~\ref{sec_numerics}.

Then, in Section~\ref{sec_methodD} we derive another approach, 
which allows us to obtain approximate solutions 
with an arbitrary approximation factor (the factor~$1$ corresponds to the precise solution). 
This is a corner cutting algorithm, which reaches a very high accuracy. 
By solving~$k$ conic programming problems 
 with $N$ constraints, where $N$ is the number of ellipses, 
 we get an approximate solution with an approximate factor 
 of~$ 1-\nicefrac{\pi^2}{2(k+1)^2}$. 
Already for $k=3$, we obtain  the factor at least 
$\nicefrac{\sqrt{2}}{2} \simeq 0.707$, which is better than by the polytope method. 
For $k = 5$, 
the factor is approximately $0.923$. These are the ``worst case estimates'' and in practice 
the corner cutting  algorithm reaches a much higher accuracy already for small~$k$. 

In Section~\ref{sec_methodE} we consider a modification 
of the corner cutting algorithm to a linear programming
problem. To this end we apply the idea of Ben-Tal and Nemirovski
of approximating quadrics by projections of higher dimensional polyhedra. 
This gives a fast algorithm of approximation of ellipses by projections of polyhedra. 
Combining  this construction with the corner cutting method significantly improves 
the accuracy.

After numerical results presented in Section~\ref{sec_numerics} we demonstrate 
some applications. 
We show that the elaborated methods of solving Problem~\ref{problem_ee} allow us to
efficiently construct Lyapunov functions for linear dynamical systems 
even in high dimensions, for which a tradition way of finding 
a quadratic Lyapunov function by s.d.p. is hardly reachable. 
For the linear switching systems, our results speed up 
the Invariant polytope algorithm in case of nonreal leading eigenvalue and 
 reduce a lot the number of ellipses defining the extremal Lyapunov function of the system.

\section{Preliminary facts and notation}\label{sec_preliminary}  % Section 2

Throughout the paper we denote vectors by bold letters and numbers by standard letters. 
Thus $\bx = (x_1, \ldots , x_d)^T \in \re^d$. As usual, for two complex vectors
$\bv, \bu \in \co^d$, their scalar product is $(\bv, \bu) = \sum_{k=1}^d v_k \bar u_k$. 
For two real vectors $\ba, \bb$, we consider the ellipse 
$$
E \ = \  E(\ba, \bb) \ = \ \bigl\{\, \ba \cos t \, + \, \bb \sin t \ | \  t \in \re \bigr\}.
$$
This is an ellipse with conjugate radii (the halfs of conjugate diameters) $\ba, \bb$. 
For a complex vector $\bv = \ba + i\bb$ with real~$\ba$ and~$\bb$, we write 
$E(\bv)$.  
For every~$s \in \re$, the real and complex parts of the vector 
$e^{- i\, s}\bv \, = \, \ba_s - i\bb_s$  are conjugate 	directions of the same ellipse, 
and therefore $E(\ba_s, \bb_s) = E(\ba, \bb)$ for all~$s \in \re$. Indeed, 
$$
e^{- i\, s}\bv\ =\ (\cos s \, - \, i \sin s)\, (\ba + i\bb)\ =\ 
(\ba \cos s \,  + \, \bb \sin s )\ +  \ i\, (- \ba \sin s \,  + \, \bb \cos s ), 
$$
hence, $\ba_s \, = \, \ba \cos s \,  + \, \bb \sin s $ and 
$\bb_s = \ba \sin s \,  - \, \bb \cos s$. 
Therefore, $\ba_s \cos t \, + \, \bb_s \sin t\, =\, \ba \cos (t+s)\, + \, 
\bb \sin (t_s)$. The pair $(\ba_s, \bb_s)$ is the image of~$(\ba, \bb)$ 
after the elliptic rotation by the angle~$s$ along the ellipse~$ E = E(\ba, \bb)$.  
Consequently,  the vectors $\ba_s, \bb_s$ are also conjugate directions 
of the ellipse~$E$. 

Elliptic polytopes are real parts of the so-called  balanced complex polytopes defined as 
follows:   
\begin{defi}\label{d.20}
A balanced convex hull of a set~$K \subset \co^d$
is 
$$
{\rm cob} \, (K)\quad = \quad 
\Bigl\{\ \sum_{k=1}^{n}  \  z_k \bv_k \quad \Bigl|    
\quad z_k \in \co\, , \ \bv_k \in K  \, , \ \sum_{k=1}^{n}  |z_k| \le 1 , 
\quad n \in \n \, \Bigr\}. 
$$
A balanced convex set is a subset of~$\co^d$ that coincides with its balanced convex hull. 
A balanced convex hull of a finite set of points is a balanced complex polytope. 
\end{defi}
If $G$ is a balanced complex polytope, then ${\rm Re}\, G$
is a convex hull of ellipses. 
Indeed, if $G \, = \, 
{\rm cob} \, \{\bv_1, \ldots , \bv_N\}\,$  and $\, \ba_k = {\rm Re}\, \bv_k, \, 
\bb_k = {\rm Im}\, \bv_k$, $\ k = 1, \ldots , N$, 
then for arbitrary complex numbers $z_k = r_k e^{-it_k},$ 
where $r_k = |z_k|$,  $\, k = 1, \ldots , N$, we have 
$$
{\rm Re}\, z_k \bv_k \ = \  r_k \bigl( \ba_k \cos t_k \, + \, \bb_k \sin t_k   \bigr) \, 
$$
and hence, the set ${\rm Re}\, G$ consists precisely of the points 
$\sum_k r_k \bu_k $ with $\, \bu_k \in E(\bv_k)$ and $\sum_k r_k \le 1$. 
This is $\, {\rm co}\, \{E(\bv_1), \ldots , E(\bv_N)\}$. 

Note that the balanced polytopes $G$ and $\bar G = \{\bar \bv \ | \ \bv \in G\}$ 
have the same real parts and hence,  
generate the same elliptic polytope~$P$. 
Therefore, $P$ does not change  if we replace 
$G$ by ${\rm cob}\, \{G, \bar G\}$. 
In what follows, if the converse is not stated, 
 we assume that the balanced complex polytope is symmetric with respect to the 
conjugacy, i.e.\ $G = \bar G$. Clearly, this holds if  so is the set of vertices $\{\bv_k\}$. 

\begin{rmk}\label{r.10}
{\em The imaginary part of a balanced complex polytope is the same 
elliptic polytope~$P$. Indeed, 
\begin{equation*}
\begin{aligned}
{\rm Im}\, z_k \bv_k \ &= \  
r_k \Bigl( - \ba_k \sin t_k  \, + \, \bb_k \cos t_k   \Bigr)\ = \ 
r_k \Bigl(\, \ba_k \cos \bigl(t_k+ \frac{\pi}{2}\bigr)  \, 
     + \, \bb_k \sin \bigl(t_k+ \frac{\pi}{2}\bigr)  \ \Bigr)\ \\ 
& = \ -i \, {\rm Re}\, z_k \bv_k\, .
\end{aligned}
\end{equation*}
We see that the set ${\rm Im}\, G$ consists of the points 
$\sum_k r_k \bv_k $ with $\, \bv_k \in E_k$ and $\sum_k r_k \le 1$, and thus, 
${\rm Im}\, G = P = {\rm Re}\, G$. 
Of course, the same is true for an arbitrary balanced convex set: 
its  real and imaginary parts  coincide. 
}
\end{rmk}

\section{Equivalent  optimisation problems and their complexity}
\label{sec_equivalentproblem}  % Section 3

To analyse the complexity and possible solutions 
of  Problem~\ref{problem_ee} we reformulate it as an optimisation problem.

\subsection{Reformulation of Problem~\ref{problem_ee}}\label{sec_reformulation}  % Section 3.1

Let $P = {\rm co}\, \{E_1, \ldots , E_N\}$ be an elliptic polytope. 
An ellipsoid  $E_0$ is not contained  in~$P$ 
if and only if $P$ possesses a hyperplane of support that 
intersects~$E_0$ at two points. For the outward normal vector~$\bx$
of that hyperplane, we have  
 $$
 \sup_{\bw_0 \in E(\ba_0, \bb_0)}\, (\bx , \bw_0) \quad > \quad 
  \sup_{\bw \in P}\, (\bx , \bw). 
 $$
 Note that
 $$
 \sup_{\bw \in E(\ba, \bb)}\, (\bx , \bw)\quad  = \quad 
 \sup_{t \in \re}\, (\bx , \ba)\, \cos t \, + \,  (\bx , \bb)\, \sin t
 \quad  = \quad 
 \sqrt{ (\bx , \ba)^2 \, + \,  (\bx , \bb)^2}\, . 
 $$
 Therefore, 
\begin{equation*}
\begin{aligned}
&{\sup_{\bw_0 \in E(\ba_0, \bb_0)}}\, (\bx , \bw_0)  =
\sqrt{ (\bx , \ba_0)^2 \, + \,  (\bx , \bb_0)^2}\\
&{\sup_{\bw \in P}}\, (\bx , \bw) =
\max_{k=1, \ldots , n} \sqrt{ (\bx , \ba_k)^2 \, + \,  (\bx , \bb_k)^2}\, . 
\end{aligned}
\end{equation*}
Thus, the assertion $E_0 \not \subset P$
is equivalent to the existence of  a solution $\bx \in \re^d$
for the system of inequalities 
\begin{equation}\label{eq.eq0}
(\bx , \ba_0)^2 \, + \,  (\bx , \bb_0)^2 \quad > 
\quad (\bx , \ba_k)^2 \, + \,  (\bx , \bb_k)^2\, ,
\qquad k=1, \ldots , N\, . 
\end{equation}
Normalising the vector~$\bx$, it can be assumed that 
$(\bx , \ba_0)^2 +   (\bx , \bb_0)^2  = 1- \varepsilon$ where $\varepsilon > 0$ 
is a small number, in which case
the system~\eqref{eq.eq0} is equivalent to the system 
$(\bx , \ba_k)^2  +  (\bx , \bb_k)^2 \le 1\, , \ k=1, \ldots , N$. 
Thus, we have proved: 
\begin{thm}\label{th.5}
Problem~\ref{problem_ee} is equivalent to the following optimisation problem: 
\begin{equation}\label{eq.prob1}
\left\{
\begin{aligned}
(\bx , \ba_0)^2 \, + \,  (\bx , \bb_0)^2 \ \to & \ \max\\
(\bx , \ba_k)^2 \, + \,  (\bx , \bb_k)^2\, \le & \, 1, \qquad  k=1, \ldots , N\, ,  
\end{aligned}
\right. 
\end{equation}
with $d$ variables $(x_1, \ldots , x_d)^T = \bx$ and given 
vectors~$\ba_k, \bb_k  \in \re^d$.  
\end{thm}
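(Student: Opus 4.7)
The plan is to reformulate the containment as a support-function inequality and then compute the support functions of ellipses explicitly. First, I would invoke the standard fact that for closed convex sets $K, L \subset \re^d$, the containment $K \subset L$ is equivalent to $h_K(\bx) \le h_L(\bx)$ for every $\bx \in \re^d$, where $h_K(\bx) = \sup_{\bw \in K}(\bx, \bw)$ is the support function. Since $E_0 \subset P$ iff ${\rm co}(E_0) \subset P$, this gives $E_0 \not\subset P$ iff there exists $\bx$ with $h_{E_0}(\bx) > h_P(\bx)$, which is really just the Hahn--Banach separation theorem applied to the point of $E_0$ lying outside $P$.

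Second, I would compute both support functions. For an ellipse, parametrising $\bw = \ba\cos t + \bb\sin t$ on $E(\ba, \bb)$ and maximising $(\bx,\ba)\cos t + (\bx,\bb)\sin t$ over $t$ via the usual amplitude formula for a linear combination of sine and cosine yields
\[
h_{E(\ba,\bb)}(\bx) \ = \ \sqrt{(\bx,\ba)^2 + (\bx,\bb)^2}.
\]
Since the support function of a convex hull is the pointwise maximum of the support functions of its pieces, I obtain $h_P(\bx) = \max_{k=1,\ldots,N}\sqrt{(\bx,\ba_k)^2 + (\bx,\bb_k)^2}$.

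Combining the two steps, $E_0 \not\subset P$ is equivalent to the existence of some $\bx \in \re^d$ satisfying simultaneously $(\bx,\ba_0)^2 + (\bx,\bb_0)^2 > (\bx,\ba_k)^2 + (\bx,\bb_k)^2$ for $k = 1,\ldots,N$. Both sides being homogeneous of degree two in $\bx$, I can rescale so that $(\bx,\ba_k)^2 + (\bx,\bb_k)^2 \le 1$ for every $k$ while the objective $(\bx,\ba_0)^2 + (\bx,\bb_0)^2$ strictly exceeds $1$; this is exactly the statement that the optimal value of~\eqref{eq.prob1} is greater than $1$. Hence $E_0 \subset P$ iff the optimum of~\eqref{eq.prob1} is at most $1$, which is the desired equivalence.

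No step presents a serious obstacle: the argument is Hahn--Banach separation plus an explicit trigonometric maximisation. The only technical point requiring attention is the transition from the strict inequality in~\eqref{eq.eq0} to the inequality constraints in~\eqref{eq.prob1} after normalisation; this is handled either by inserting a small slack $\varepsilon > 0$ as in the paper, or by noting that both support functions are continuous and positively homogeneous, so strict inequality at some $\bx$ is preserved under a suitable rescaling to the feasible region.
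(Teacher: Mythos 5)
Your argument is correct and follows essentially the same route as the paper: separation via support functions, the explicit computation $h_{E(\ba,\bb)}(\bx)=\sqrt{(\bx,\ba)^2+(\bx,\bb)^2}$ by the trigonometric amplitude formula, the identity $h_P=\max_k h_{E_k}$, and a homogeneity-based rescaling to pass from the strict system~\eqref{eq.eq0} to the constrained maximisation~\eqref{eq.prob1}. No substantive differences to report.
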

Therefore, we need to maximize a positive semidefinite quadratic form of rank two on the 
intersection of cylinders.

\subsection{The complexity of Problem~\ref{problem_ee}}  % Section 3.2
\label{sec_complexityee}

Maximisation of a convex function over a convex set is usually nontrivial. 
Problem~\ref{problem_ee} and its reformulation~\eqref{eq.prob1}, 
does not seem to be an exception. Moreover, 
the feasible domain is defined by $N$ quadratic inequalities in~$\re^d$ and 
does not look simple either. Geometrically this is an intersection of~$N$
elliptic cylinders in~$\re^d$ with two-dimensional bases. 
The following result sheds some 
light on the complexity of this problem and hence, 
on the complexity of~Problem~\ref{problem_ee}.

\begin{thm}\label{th.7}
Maximization of a positive semidefinite quadratic form of rank two over 
a centrally symmetric polyhedron defined by $2N$ linear inequalities in~$\re^d$
%is 
can be
reduced to Problem~\ref{problem_ee}.  
  \end{thm}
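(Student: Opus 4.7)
The plan is to exhibit an explicit reduction, using the flexibility granted by Definition~\ref{d.10} that an ellipse is allowed to degenerate to a segment.

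First I would write the given polyhedron in a canonical centrally symmetric form. A centrally symmetric polyhedron in $\re^d$ defined by $2N$ linear inequalities can, after pairing each inequality with its reflected partner, be written as
\[
Q \ = \ \bigl\{\, \bx \in \re^d \ \bigl|\ |(\bx, \bc_k)| \le 1, \ k = 1, \ldots, N \,\bigr\},
\]
for suitable vectors $\bc_1, \ldots , \bc_N \in \re^d$. The hypothesis of the theorem then says we wish to maximise a rank-two positive semidefinite form on $Q$, and any such form may be written as $(\bx,\ba_0)^2 + (\bx,\bb_0)^2$ for some $\ba_0, \bb_0 \in \re^d$ (by diagonalising the form in its two-dimensional image).

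Next I would observe the core identification: the single linear constraint $|(\bx,\bc_k)| \le 1$ is equivalent to the quadratic constraint
\[
(\bx, \bc_k)^2 \, + \, (\bx, \boldsymbol{0})^2 \ \le \ 1,
\]
which is precisely the elliptic cylinder constraint associated, in the notation of~\eqref{eq.prob1}, to the \emph{degenerate} ellipse $E_k := E(\bc_k, \boldsymbol{0})$. Such a degenerate ellipse is the segment $\{t\bc_k \mid t \in [-1,1]\}$, which is permissible in Definition~\ref{d.10}. Setting $\ba_k = \bc_k$ and $\bb_k = \boldsymbol{0}$ for $k = 1, \ldots , N$, the rank-two maximisation over~$Q$ coincides verbatim with the optimisation problem~\eqref{eq.prob1}. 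An invocation of Theorem~\ref{th.5} converts~\eqref{eq.prob1} into the membership question of Problem~\ref{problem_ee} applied to $E_0 = E(\ba_0,\bb_0)$ and $E_1, \ldots , E_N$.

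The main obstacle is essentially bookkeeping rather than a mathematical difficulty: Problem~\ref{problem_ee} is a decision problem, while the theorem speaks of maximisation. To make the reduction strict one has to note that Theorem~\ref{th.5} is in fact an equivalence of problems with a scalar parameter (the value~$1$ on the right-hand side of the constraints), and determining the optimal value of the quadratic form amounts to finding the largest $\lambda$ such that $\lambda \cdot E_0 \subset {\rm co}\,\{E_1,\ldots,E_N\}$; this is performed by a finite number of queries of Problem~\ref{problem_ee} (or by homogeneity, a single query after rescaling $\ba_0, \bb_0$). With this observation the reduction is complete.
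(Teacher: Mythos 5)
Your proposal is correct and takes essentially the same route as the paper: the paper likewise encodes each linear constraint $|(\bx,\bh_k)|\le 1$ as a degenerate-ellipse constraint (it writes $\ba_k=\bh_k\cos t_k$, $\bb_k=\bh_k\sin t_k$ with parallel conjugate radii, which yields the same segment $[-\bh_k,\bh_k]$ as your simpler choice $\bb_k=\boldsymbol{0}$) and then appeals to the equivalence of Theorem~\ref{th.5}. Your closing remark on bridging the decision-problem/maximisation gap is a detail the paper leaves implicit, but it does not change the argument.
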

\begin{proof}
%A polyhedron symmetric about the origin can be defined 
An origin-symmetric polyhedron is defined 
by $N$ inequalities ${(\bx, \ba_k)^2 \le 1}$. 
Choosing arbitrary  numbers $t_1, \ldots , 
 t_N \in \bigl(0, \frac{\pi}{2} \bigr)$, we set 
 $\ba_k = \bh_k\cos t_k , \  \bb_k = \bh_k\sin t_k $. Then the polytope 
 is defined by the system of constraints of the reformulation~\eqref{eq.prob1}. 
 Finally,  every quadratic form of rank two can be written as 
 $(\bx , \ba_0)^2 +  (\bx , \bb_0)^2$ for suitable $\ba_0, \bb_0$, which completes the 
 proof.    
\end{proof}

\begin{conj}\label{con.10}
Maximising  a positive semidefinite quadratic form of rank two over 
a centrally symmetric polyhedron is NP-hard. 
\end{conj}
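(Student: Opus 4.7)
The natural plan is a polynomial-time reduction from a classical NP-hard decision problem, most likely PARTITION or SUBSET-SUM. The reformulation makes the geometric picture transparent: the objective $(\bx,\ba_0)^2+(\bx,\bb_0)^2$ equals $\norm{A\bx}^2$, where $A$ is the $2\times d$ matrix whose rows are $\ba_0^T$ and $\bb_0^T$, so the task is to find the point of largest Euclidean norm inside the planar image $AP$ of the centrally symmetric polyhedron $P$. Since $AP$ is itself a centrally symmetric convex polygon, the maximum is attained at a vertex, and the decision version reads: given $P$ by polynomially many inequalities in~$\re^d$ and a linear map $A\colon\re^d\to\re^2$, decide whether some vertex of $AP$ has norm exceeding a prescribed threshold.

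I would try to leverage Theorem~\ref{th.8}, which already furnishes a polyhedron cut out by $O(k)$ inequalities whose planar image has $2^k$ local maxima, and convert this geometric explosion into an arithmetic one. Concretely, given a PARTITION instance $a_1,\ldots,a_n$ with $\sum_i a_i = 2S$, I would aim to choose the linear constraints so that the vertices of $AP$ are in essentially one-to-one correspondence with the subsets $I\subseteq\{1,\ldots,n\}$, and so that the squared norm at the vertex indexed by $I$ is a monotone function of $|s(I)-S|$ with $s(I)=\sum_{i\in I}a_i$. The global maximum would then exceed a fixed threshold precisely when some subset realises $s(I)=S$. In parallel, I would explore an alternative route via Theorem~\ref{th.7}: translate the problem back into the ``ellipse in ellipses'' formulation of Problem~\ref{problem_ee}, where the richer geometry of ellipses (rather than slabs) might make such an encoding more natural.

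The principal obstacle, in my view, is precisely the rank-two restriction. For quadratic forms of unrestricted rank, classical arguments (in the spirit of Pardalos and Vavasis) yield NP-hardness of convex maximisation with little effort; here, however, all arithmetic information must be squeezed into a single two-dimensional projection, and the construction supporting Theorem~\ref{th.8} is inherently geometric rather than arithmetic, so it has to be enriched with encoded integer weights while keeping both the number of inequalities and the overall bit-length polynomial. A secondary difficulty is preserving a polynomial-size gap between the optima on ``yes'' and ``no'' instances under the projection; without such a gap the reduction only demonstrates hardness of exact optimisation with super-polynomial precision, which is substantially weaker than genuine NP-hardness and leaves open the possibility of a polynomial-time approximation scheme.
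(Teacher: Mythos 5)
The statement you are addressing is stated in the paper as a \emph{conjecture}: the authors offer no proof of NP-hardness, only circumstantial evidence, namely Theorem~\ref{th.8} (a rank-two form over a polyhedron with $O(N)$ facets can have $2^{N-2}$ local maxima with distinct values) together with the remark that dropping the rank restriction gives NP-hardness via Max-Cut. Your text is likewise not a proof but a reduction plan that is never carried out: no polyhedron is constructed, no bit-length or gap analysis is performed, and you yourself flag the two decisive difficulties (injecting arithmetic into an inherently geometric construction, and preserving a polynomially bounded optimality gap). As it stands, the proposal establishes nothing beyond what the paper already asserts as open.

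Beyond incompleteness, there is a concrete obstruction to the specific route you sketch. You propose to make the vertices of the planar image $AP$ correspond to subsets $I\subseteq\{1,\ldots,n\}$ with squared norm a monotone function of $|s(I)-S|$. The natural polytope whose vertices index subsets is the cube, but the paper explicitly notes (citing~\cite{FFL05}) that maximising a rank-two form over the unit cube is solvable in time linear in $Nd$, because the image of a box under a rank-two map is a flat zonotope whose $O(d)$ vertices can be enumerated by sorting edge directions. The same collapse afflicts any centrally symmetric polyhedron whose projection is a zonotope with polynomially many generators: the $2^n$ subsets do not survive as $2^n$ distinct vertices of $AP$. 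So your encoding must produce a projection with exponentially many vertices from polynomially many facets --- which is exactly what Proposition~\ref{p.30} achieves, but by a construction (iterated reflections producing a near-regular $2^n$-gon) that carries no arithmetic payload and offers no evident way to attach subset sums to vertices. Until that tension is resolved, the reduction from PARTITION cannot get off the ground, and the statement remains, as the authors intend, a conjecture.
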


Let us recall that the problem of maximizing a  positive semidefinite quadratic 
form over a polyhedron is 
 NP-hard  even if that polyhedron is a unit cube, since it is not simpler then the 
 Max-Cut  problem~\cite{GW95}\cite{H99}. 
 Moreover, even its approximate solution is NP-hard~\cite{H01}.  
 However, the rank two assumption 
may significantly simplify it. For example, the complexity of this problem on the unit cube 
becomes not only polynomial, but linear with respect to~$Nd$. It is reduced to 
finding the diameter of a flat zonotope, 
see~\cite{FFL05} for more result on this and related problems.  Nevertheless, we
believe in the high complexity of this problem.   
One argument for that is a large number of local extrema. 
The following theorem states that if we drop the assumption of the symmetry of the 
polyhedron, then the number of local maxima with different values of the function 
can be exponential.

\begin{thm}\label{th.8}
For each~$N\ge 2$ there exists 
 a polyhedron in~$\re^N$ with less than~$2N$ facets 
 and a positive semidefinite quadratic form of rank two on that polyhedron which has at least 
$2^{N-2}$ points of local maxima with different values of the function.  
  \end{thm}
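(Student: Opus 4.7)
\noindent\emph{Proof plan.}
The plan is to obtain $P$ and $Q$ by an explicit construction that exploits the correspondence between local maxima of a rank-two convex quadratic form on a polyhedron and peaks of the squared distance on its 2D shadow. Writing $Q(\bx)=\|T\bx\|^2$ for a linear map $T\colon\re^N\to\re^2$, local maxima of $Q$ on $P$ occur at vertices $\bv$ of $P$, and $Q(\bv)=\|T\bv\|^2$ depends only on the image $T\bv$ in the shadow polygon $T(P)$. The task thus reduces to exhibiting a polyhedron $P$ with $2N-1$ facets whose 2D shadow under a suitable $T$ is a convex polygon with at least $2^{N-1}$ vertices at pairwise distinct distances from the origin, arranged so that at least $2^{N-2}$ of them are strict local maxima of $\|\cdot\|^2$ along the boundary.

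To meet the facet bound, I would use a lifting reduction. Suppose one has a polyhedron $P'\subset\re^{N-1}$ with $2(N-1)$ facets and a linear map $T'\colon\re^{N-1}\to\re^2$ whose shadow $T'(P')$ has the desired structure. Set $P=P'\times[0,+\infty)\subset\re^N$; this has $2N-1$ facets, and its vertices are $(\bv,0)$ for $\bv$ a vertex of $P'$. Take $Q(\bx)=(\ba\cdot\bx)^2+(\bb\cdot\bx)^2$ where $\ba,\bb\in\re^{N-1}$ are the rows of $T'$, extended to $\re^N$ with vanishing $x_N$-component; then $Q$ is rank-two, positive semidefinite, and independent of $x_N$. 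The edges of $P$ at $(\bv,0)$ are the horizontal edges to $(\bv',0)$ with $\bv'$ adjacent to $\bv$ in $P'$, along which $Q$ restricts to $\|T'(\cdot)\|^2$, plus the unbounded vertical ray in the $x_N$-direction, along which $Q$ is constant. Hence $(\bv,0)$ is a local maximum of $Q$ on $P$ iff $\bv$ is a local maximum of $\|T'(\cdot)\|^2$ on $P'$, iff $T'(\bv)$ is a shadow vertex whose squared distance from the origin exceeds that of both of its boundary neighbours. Since a cyclic sequence of $2^{N-1}$ pairwise distinct real values attains up to $\lfloor 2^{N-1}/2\rfloor=2^{N-2}$ strict local maxima in an alternating arrangement, this yields the required lower bound.

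The hard part is the construction of $P'$: a polyhedron in $\re^{N-1}$ with only $2(N-1)$ facets whose 2D shadow has exponentially many vertices. This is exactly the phenomenon behind the exponential worst-case of parametric simplex paths, realised for instance by Klee--Minty cubes and their deformed-product refinements. Such a construction yields a shadow with at least $2^{N-1}$ extreme vertices; a small generic perturbation of the data separates their distances from the origin, and a judicious choice of the projection direction places those distances in a high--low alternation around the polygon boundary, so that every second vertex is a strict local maximum of $\|\cdot\|^2$. Combined with the lifting step, this produces at least $2^{N-2}$ local maxima of $Q$ on $P$ with pairwise distinct values.
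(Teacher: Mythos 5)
Your overall frame coincides with the paper's: realise the rank-two form as the squared norm of a two-dimensional shadow of the polyhedron, and count vertices of the shadow polygon at which the squared distance to the origin is locally maximal. The paper's proof does exactly this. But your proposal has a genuine gap at its decisive step.

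The gap is the sentence claiming that ``a judicious choice of the projection direction places those distances in a high--low alternation around the polygon boundary, so that every second vertex is a strict local maximum of $\|\cdot\|^2$.'' Nothing forces this. A vertex of a convex polygon is a local maximum of the squared distance on the boundary exactly when its distance to the origin exceeds that of both neighbours (this part of your argument is fine, by convexity of $\|\cdot\|^2$ on each edge), but whether that happens is a global geometric condition on the polygon relative to the origin. A convex polygon with $2^{N-1}$ vertices and pairwise distinct vertex distances can perfectly well have a single local maximum of $\|\cdot\|^2$ on its boundary (think of a long thin polygon far from the origin). A generic perturbation only separates the distances; it does not create an alternation, and the projection direction is a $2(N-1)$-parameter family that you are asking to satisfy $\sim 2^{N-2}$ independent inequalities. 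You give no argument for why the Klee--Minty/deformed-product shadows admit such an arrangement, and their shadow polygons are not centred at the origin in any controlled way. A secondary weakness is that the existence of the seed polytope $P'$ (a $(N-1)$-polytope with $2(N-1)$ facets whose two-dimensional shadow has $2^{N-1}$ vertices) is outsourced to the literature with a specific quantitative claim that you do not verify.

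The paper closes exactly this hole by controlling the \emph{shape} of the shadow rather than hoping for an alternation: it constructs (Proposition A.2, via the recursive doubling of system~\eqref{eq.rTn} already set up in Section~\ref{sec_methodE}) a polyhedron in $\re^{N}$ with at most $2N-1$ facets whose shadow is an arbitrarily small perturbation of a \emph{regular} $2^{N-2}$-gon \emph{centred at the origin}. For such a polygon every vertex is automatically a strict local maximum of the squared distance, since on each edge the convex function $\|\cdot\|^2$ dips toward the inradius and returns to (nearly) the circumradius at both endpoints; one therefore gets all $2^{N-2}$ vertices as local maxima rather than every second one. Distinctness of the $2^{N-2}$ values is then obtained by replacing each orthogonal reflection in the doubling construction by a nearby affine reflection (Lemma A.1). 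If you want to salvage your route, you would need to supply an explicit construction with this kind of control over the position of the origin relative to the shadow; at that point you would essentially be reproducing the paper's argument.
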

The proof is in the Appendix. 

\bigskip

On the other hand, as we shall see in the next section, 
in low dimensions, Problem~\ref{problem_ee} admits efficient solutions. 

\section{Problem~\ref{problem_ee} in low dimensions}  % Section 4
\label{sec_lowdimensions}

In dimensions~$d=2, 3$, Problem~\ref{problem_ee} can be efficiently solved.
The solution in the two-dim\-en\-sional case is simple, the three-dimensional case 
is computationally harder. 

\subsection{%
\texorpdfstring{The dimension $d=2$}{The dimension d=2}
}  % Section 4.1

In the two-dimensional plane the solvability of the system~\eqref{eq.eq0} 
is explicitly  decidable, 
which solves Problem~\ref{problem_ee}.
\begin{prop}\label{p.10}
In the case $d=2$, Problem~\ref{problem_ee} 
admits an  explicit solution for arbitrary ellipses 
$E_0, \ldots , E_N$. 
The complexity of the solution is linear in~$N$.   
\end{prop}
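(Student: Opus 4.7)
The plan is to exploit the fact that in dimension two the set of normalised test directions is one-dimensional, so after passing to the optimisation reformulation~\eqref{eq.prob1} Problem~\ref{problem_ee} collapses to the classical problem of deciding whether $N$ arcs cover a circle.

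First, I parameterise the unit direction as $\bx(\theta) = (\cos\theta, \sin\theta)$, $\theta \in [0, 2\pi)$, and compute, for every ellipse $E_k = E(\ba_k, \bb_k)$, the squared support value $h_k(\theta)^2 := (\bx(\theta), \ba_k)^2 + (\bx(\theta), \bb_k)^2$. Expanding the two squares and applying the double-angle identities rewrites this as a single cosine wave in the doubled angle, $h_k(\theta)^2 = \alpha_k + \rho_k \cos(2\theta - 2\phi_k)$, with $\alpha_k \ge \rho_k \ge 0$ and $\alpha_k, \rho_k, \phi_k$ computable in constant time from $\ba_k, \bb_k$. The $\pi$-periodicity in $\theta$ reflects the central symmetry of each~$E_k$, so I will work on the circle parameterised by $2\theta \in [0, 2\pi)$.

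Second, by Theorem~\ref{th.5} the inclusion $E_0 \subset \mathrm{co}\{E_1, \ldots , E_N\}$ is equivalent to $h_0(\theta)^2 \le \max_k h_k(\theta)^2$ for every~$\theta$; equivalently, the sets $S_k = \{\theta : h_0(\theta)^2 \le h_k(\theta)^2\}$ must cover the circle. Since $h_k^2 - h_0^2$ is again a single cosine wave in~$2\theta$, say $A_k + R_k\cos(2\theta - 2\psi_k)$, each~$S_k$ is explicitly one of three objects: empty when $A_k + R_k < 0$; the full circle when $A_k \ge R_k$; and otherwise a single closed arc centred at~$\psi_k$ of half-width $\tfrac12\arccos(-A_k/R_k)$.

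Third, to decide whether $\bigcup_k S_k$ covers the circle I would run a standard sweep: tag each of the (at most) $2N$ endpoints as opening or closing, sort once, and scan maintaining a depth counter, declaring failure at the first angle where the depth drops to zero. When failure occurs the returned angle yields a witness direction $\bx(\theta^{*})$ certifying that $E_0$ protrudes through a supporting hyperplane of $\mathrm{co}\{E_1,\ldots,E_N\}$. Per-ellipse arithmetic is $O(1)$ and the sweep is linear in~$N$, giving the complexity asserted in the proposition. The only step that requires care is cosmetic rather than conceptual: the trigonometric reduction and the correct bookkeeping of the degenerate cases (empty arcs, full-circle arcs, and segment-ellipses with $\alpha_k = \rho_k$) must be handled so that the sweep does not misclassify an~$S_k$; no genuine obstacle arises in dimension two.
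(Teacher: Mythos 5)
Your proof is correct and takes essentially the same route as the paper's: both reduce the problem to the one-dimensional space of test directions (you via the doubled angle $2\theta$, the paper via the slope $y/x$), observe that each constraint $h_0^2\le h_k^2$ carves out a single arc, its complement, the whole circle, or the empty set, and then decide the inclusion by an arc-covering test, which is the De Morgan dual of the paper's empty-intersection test for the sets $I_k$. The only nit is that your endpoint-sorting sweep is $O(N\log N)$ rather than the linear bound claimed in the statement, though the paper's own procedure for intersecting intervals and complements of intervals faces the same issue in the worst case.
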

The proof is constructive and gives the method for the solution. 
\begin{proof}
Denote $\bx = (x, y)^T$ and rewrite the inequalities~\eqref{eq.eq0}
 in coordinates. After  simplifications we get $A_ky^2 +  2B_kxy +  C_kx^2 > 0$, 
 $k = 1, \ldots , N$, where $A_k, B_k, C_k$ are known coefficients. 
 The set of solutions to the  $k^{\rm th}$ inequality is  
 $\frac{y}{x} \in I_k$, where $I_k$ is either the interval with ends at 
 the roots of the quadratic equation $A_k t^2 + 2B_kt + C_k = 0$, 
if $A_k < 0$ (if there are no real roots, then $I_k = \emptyset$); 
or the union of two open rays with the same roots, 
if $A_k > 0$ (if there are no real roots, then $I_k = \re$); 
or one ray if $A_k = 0, B_k \ne 0$; the other cases are trivial. 
Then the solution of the system~\eqref{eq.eq0} 
consists of points~$\bx = (x, y)^T$ such that the ratio $ \frac{y}{x}$ belongs
to the intersection   
$\,  \bigcap\limits_{k=1, \ldots , N}\, I_k$. 
Hence, $E_0 
\subset {\rm co}\, \{E_1, \ldots , E_N\}$ if and only if this intersection is empty, 
i.e.\ $\bigcap\limits_{k=1, \ldots , N}\, I_k\, = \, \emptyset$.
\end{proof}

\subsection{%
\texorpdfstring{The dimension $d=3$}{The dimension d=3}
}  % Section 4.2

In the three-dimensional space the solvability of the 
system~\eqref{eq.eq0} is also explicitly  decidable, but much harder than in dimension~$2$. 
\begin{prop}\label{p.15}
In the case $d=3$, Problem~\ref{problem_ee},  for arbitrary ellipses 
$E_0, \ldots , E_N$, is reduced to solving of $O(N^2)$ bivariate quadratic systems of equations. 
\end{prop}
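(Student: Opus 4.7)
The plan is to apply Theorem~\ref{th.5} to rewrite the inclusion as the feasibility of $N$ strict quadratic inequalities in $\re^3$, exploit homogeneity to pass to the projective plane $\re P^2$, and then decide feasibility by reading off the sign pattern of the cells in the arrangement of $N$ conic curves. The bottleneck step is the computation of the pairwise intersections of these conics: one bivariate quadratic system per pair.

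By Theorem~\ref{th.5}, the containment $E_0\subset {\rm co}\,\{E_1,\ldots,E_N\}$ fails if and only if there exists $\bx\in\re^3$ with
\[
h_k(\bx)\ :=\ (\bx,\ba_0)^2+(\bx,\bb_0)^2-(\bx,\ba_k)^2-(\bx,\bb_k)^2\ >\ 0, \qquad k=1,\ldots,N.
\]
Each $h_k$ is a quadratic form in $\bx$, so its sign is scale-invariant and invariant under $\bx\mapsto -\bx$. The question thus reduces to whether the open set $U=\bigcap_k\{[\bx]\in\re P^2: h_k(\bx)>0\}$ is non-empty; its boundary is contained in the union of the $N$ conic curves $\Gamma_k=\{[\bx]\in\re P^2: h_k(\bx)=0\}$.

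I would now decide non-emptiness of $U$ by analysing the arrangement of $\Gamma_1,\ldots,\Gamma_N$ in $\re P^2$. This arrangement partitions the plane into open cells on each of which every $h_k$ has constant sign, and $U$ is precisely the union of the cells with sign pattern $(+,\ldots,+)$. The arrangement of $N$ degree-two curves in the projective plane has $O(N^2)$ vertices, edges and faces by a standard Euler-characteristic count, and it is completely determined once all pairwise intersections $\Gamma_i\cap\Gamma_j$ are known. In an affine chart, say $\{x_3=1\}$, the set $\Gamma_i\cap\Gamma_j$ is the zero set of the bivariate quadratic system $\{h_i(x_1,x_2,1)=0,\ h_j(x_1,x_2,1)=0\}$, which has at most four real solutions by B\'ezout. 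There are $\binom{N}{2}=O(N^2)$ unordered pairs, giving the claimed $O(N^2)$ bivariate quadratic systems. Once the arrangement is known, evaluating $h_1,\ldots,h_N$ at a single interior test point in each cell reveals the sign pattern, and $U\neq\emptyset$ iff some cell realises $(+,\ldots,+)$.

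The principal technical obstacle is not the combinatorial bookkeeping but the robust handling of degenerate configurations: tangencies of two conics, concurrency of three or more curves at a single point, and the case when some $h_k$ is sign-definite so that $\Gamma_k$ is empty or degenerates to a pair of lines. A finite case analysis, together with covering $\re P^2$ by two affine charts to deal with points at infinity, absorbs all such situations without affecting the $O(N^2)$ bound.
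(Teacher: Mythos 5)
Your proposal is correct and arrives at the same $O(N^2)$ count of bivariate quadratic systems, but the decision procedure is genuinely different from the paper's. You construct the full arrangement of the $N$ conics $\Gamma_1,\ldots,\Gamma_N$ in $\re P^2$ and test the sign pattern of one interior point per cell; the paper instead perturbs the strict system to $f_i(x,y)-\varepsilon\le 0$, observes that the feasible set $\cD$ is closed, and notes that its point closest to the origin must be either the origin, a pairwise intersection $\Gamma_i\cap\Gamma_j$, or a critical point of the distance function restricted to a single $\Gamma_i$ --- giving an explicit list of $2N^2+2N+1$ candidate points, each checked against all $N$ inequalities. The paper's route buys a much lighter procedure: no arrangement topology, no cell enumeration, no need to locate interior test points (your step ``evaluate at a single interior test point in each cell'' quietly requires constructing the arrangement's combinatorial structure, sorting intersection points along each conic, and sampling cells --- all doable, but substantially more machinery than the bare intersection computations, and this is where your degenerate-configuration case analysis really lives). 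Your route buys a cleaner conceptual picture (projective invariance, no $\varepsilon$-perturbation, B\'ezout bound made explicit) and handles the points at infinity honestly via two charts, whereas the paper's division by $z^2$ silently discards the line $z=0$ (harmless, since the inequalities are strict and open, but worth a sentence). Both are valid proofs of the stated reduction.
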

\begin{proof}
 Denote $\bx = (x, y, z)^T$ and rewrite the inequalities~\eqref{eq.eq0}
 in coordinates. This is  a system of homogeneous inequalities of degree~$2$. 
 After the division by~$z^2$,  we get a system of 
 quadratic inequalities  $f_i(x, y) < 0, \ i = 1, \ldots , N$. 
 It is compatible precisely when so is the system 
 $f_i(x, y) - \varepsilon \le  0, \ i = 1, \ldots , N$, 
 for some small~$\varepsilon > 0$.
 Denote by~$\cD$ the set of its solutions 
 and assume it is nonempty. 
 This is a closed subset of~$\re^2$ bounded by arcs of the  quadrics 
 $\Gamma_i = \bigl\{(x, y)^T \in \re^2 \ | \  f_i(x,y) - \varepsilon  = 0\bigr\}$. 
 The closest to the origin point of~$\cD$ belongs to one of the three sets: 
 1) the origin itself; 
 2)  points of pairwise  intersections $\Gamma_i \cap \Gamma_j, \, i\ne j$; 
 3) closest to the origin  points of $\Gamma_i\, , \, i = 1, \ldots , N$. 
 If some of those quadrics coincide or are circles centred at the origin, 
 then we reduce the number of quadrics by the standard argument. 
 Otherwise, the set 2 contains at most $4\cdot \frac{N(N-1)}{2} \, = \, 
 2N(N-1)$ points; the set 3 contains at most $4N$ points. Hence, 
 if $\cD$ is nonempty, then it contains one of the points of the sets 1, 2, 3.
 Thus, to decide if $\cD$ is empty or not, one needs to 
 take each of those $2N(N-1) + 4N +1 \, = \, 2N^2 + 2N +1$ points 
 and check whether it belongs to~$\cD$, 
 i.e.\ satisfies  all the 
 inequalities $f_i(x, y) - \varepsilon\le 0, \ i = 1, \ldots , N$. 
 If the answer is affirmative for at least one point,
 then system~\eqref{eq.eq0} is compatible and $E_0 \ne \subset P$,
 otherwise $E_0 \ne \subset P$. 
 
 Evaluating each of those $O(N^2)$  points, except for the first one,  is done by solving a system 
 of two quadratic inequalities.  
\end{proof}

\subsection{Problem~\ref{problem_ee} in a fixed dimension}  % Section 4.3

Similarly to Proposition~\ref{p.15},
one can show that Problem~\ref{problem_ee} 
in~$\re^d$ is reduced to 
$O(N^{d-1})$ systems of $d$ quadratic equations with $d$ variables. 
The complexity of this problem is formally polynomial in~$N$, with the degree depending on~$d$. 
However the method used in the case $d=3$ 
(the exhaustion of points of intersections and of points minimizing the distance to the 
origin) becomes non-practical for higher dimensions.

\section{Approximate solutions}  % Section 5
\label{sec_approximatesolutions}

Apart from the low-dimensional cases, most likely, no 
efficient algorithms exist to obtain an explicit solution of Problem~\ref{problem_ee}. 
That is why 
we are interested in  approximate solutions with a given relative error 
(approximation factor) according to the following definition: 
\begin{defi}\label{d.30}
A method solves Problem~\ref{problem_ee} approximately with a factor~$q \in [0,1]$ if 
 it decides between two cases: 
either  $E_0  \not \subset P $ or  $ \, q E_0  \subset P$. 
 	
\end{defi}
So, the extreme case~$q=1$ corresponds to a precise solution, 
the other extreme case~$q=0$ means that the method does not give any approximate solution. 
 We consider two methods. The first one is based on the construction of 
 a balanced complex polytope. Such polytopes were  deeply analysed in~\cite{GWZ}\cite{GZ09}\cite{GZ15}. 
 At the first site, the results of those works give a straightforward 
 solution to Problem~\ref{problem_ee}. However, this is not the case.  We are going to show that 
 the balanced complex polytope method provides only 
 an approximate solution with the factor~$q=\nicefrac{1}{2}$ and this value cannot be improved. 
Moreover, this approximation is attained only after a slight modification of this method% 
 %~(adding all complex conjugate vertices)
 , otherwise the approximation factor may drop to zero. 
 Then we introduce the second method which provides a better approximation
 (with the factor~$q$ arbitrarily close to~$1$, i.e.\ to the precise solution).

\section{The complex polytope method}  % Section 6
\label{sec_methodB}

We have an elliptic polytope $P = {\rm co}\, \{E_1, \ldots , E_N\}$ 
and an ellipse~$E_0$ and 
need to decide whether or not  $E_0 \subset P$.  
For each ellipse~$E_k$, we choose arbitrary conjugate radii~$\ba_k, \bb_k$, 
thus $E_k = E_k(\ba_k, \bb_k), \ k = 1, \ldots , N$. 
Define $\bv_k = \ba_k + i \bb_k $ and consider the balanced complex polytope 
\begin{equation}
G \ = \ {\rm cob}\, \Bigl\{\ \bv_k \ \Bigl| \ k = 1, \ldots N\, \Bigr\}\, .
\end{equation}
To get an approximate solution of Problem~\ref{problem_ee} 
we consider the following auxiliary problem: 

\begin{customproblem}{EE*}  % problem 2
\label{problem_ees}
For 
%given 
points~$\bv_0, \ldots , \bv_N \in \co^d$,
decide whether or not  $\, \bv_0 \in {\rm cob}\, \{\bv_1, \ldots , \bv_N\}$.   
\end{customproblem}

What is the relation between Problem~\ref{problem_ee} and~\ref{problem_ees}? 
Clearly, if $\bv_0 \in G$, then $E_0 \subset P$. Indeed, if $\bv_0 \in G$, then 
$e^{it}\bv_0 \in G$ for all~$t \in \re$, hence 
$E_0 = {\rm Re}\, \{e^{it}\bv_0 , \ t \in \re\} \, \subset \, 
{\rm Re}\, G$.  However, the converse is, in general, not true
and  Problem~\ref{problem_ees} is not equivalent to Problem~\ref{problem_ee}. Moreover, 
Problem~\ref{problem_ees} does not even provide  an approximate solution to 
Problem~\ref{problem_ee} with a positive factor. This means that 
the assertion $E_0 \subset P$ does not imply 
the existence of a positive~$q$ such that  $q\bv_0 \in G$. 
 \begin{prop}\label{p.17}
Problem~\ref{problem_ees} gives an approximate solution 
to Problem~\ref{problem_ee} with the factor~$q=0$. 
\end{prop}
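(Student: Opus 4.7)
The plan is to exhibit a single-ellipse counterexample in which $E_0 \subset P$ trivially, yet no positive rescaling of $\bv_0$ lies in the balanced complex polytope $G$. The key observation, foreshadowed in the paragraph preceding the proposition, is that the same ellipse $E(\bv)$ may be encoded either by $\bv$ or by $\bar \bv$, yet in general these two complex vectors span different one-dimensional subspaces of $\co^d$.

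Concretely, I would fix $d \ge 2$ and pick linearly independent vectors $\ba, \bb \in \re^d$, so the ellipse $E_1 = E(\ba,\bb)$ is non-degenerate. Setting $N=1$, $\bv_1 = \ba + i\bb$, $P = {\rm co}\{E_1\}$ and $G = \{z\bv_1 : |z| \le 1\}$, I would take the test ellipse to be the \emph{same} ellipse $E_0 = E_1$, but represented by the conjugate vector $\bv_0 = \bar \bv_1$. The paper already observes that $\bv_1$ and $\bar \bv_1$ define the same ellipse, so $E_0 \subset P$ holds trivially, and a positive approximation factor of the Problem~\ref{problem_ees} test would require $q\bv_0 \in G$ for some $q>0$.

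The verification that this fails is a short linear-algebra check. If $q\bv_0 = z\bv_1$ with $z = \alpha + i\beta$, equating real and imaginary parts of $q(\ba - i\bb) = (\alpha + i\beta)(\ba + i\bb)$ gives $q\ba = \alpha \ba - \beta \bb$ and $-q\bb = \beta \ba + \alpha \bb$; linear independence of $\ba, \bb$ forces $\alpha = q,\ \beta = 0$ from the first equation and simultaneously $\alpha = -q,\ \beta = 0$ from the second, which is inconsistent for $q > 0$. Hence $q\bv_0 \notin G$ for every $q > 0$, and the complex-polytope method fails to certify any positive factor.

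There is no genuine obstacle in this plan; the whole argument boils down to the fact that $\bar \bv_1$ is not a complex multiple of $\bv_1$ as soon as the ellipse is non-degenerate. The same example also motivates the remedy discussed immediately after the proposition: adjoining $\bar \bv_1$ to the generating set of $G$ restores a positive approximation factor.
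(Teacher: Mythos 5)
Your proposal is correct and is essentially the paper's own argument: the paper's example takes $E_0=E_1$ to be the unit disc with $\bv_1 = i\,\bar\bv_0$, which is exactly your construction (same ellipse represented by a conjugate vector) specialised to $\ba=(1,0)^T$, $\bb=(0,1)^T$, and the concluding linear-algebra check that $q\bv_0=z\bv_1$ forces $q=0$ is the same. Your version is marginally more general (any non-degenerate ellipse in any dimension $d\ge 2$), but the idea and the computation coincide.
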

\begin{proof}
Let $\ba_0 = (1,0)^T$ and $\bb_0 = (0,1)^T$, and 
 $\, \ba_1 = \bb_0, \bb_1 = \ba_0$.
Clearly, $E_0$ and $E_1$ both coincide with the unit disc, hence 
$P$ is also  a unit disc and $E_0 \subset P$. 
On the other hand, no positive number $q$ exists such that 
$q\bv_0 \in G$. Indeed, $G = \{z \bv_1 \ \big| \ |z| \le 1\}$. 
Denote $z = t + i u$. If $q\bv_0 = z\bv_1$, 
then 
$$
q\bv_0 \ = \ t \ba_1 \, - \, u \bb_1 \, +\,  
i\, (t \bb_1 \, + \, u \ba_1) \ = \ 
t \bb_0 \, - \, u \ba_0 \, +\,  
i\, (t \ba_0\, + \,  u \bb_0)\, .
$$
Hence, 
$$
q \ba_0 \ = \ t \bb_0 \, - \, u \ba_0 \quad\text{and}\quad
q\bb_0 \ = \ t \ba_0\, + \,  u \bb_0,
$$
which is coordinatewise $(q, 0) = (-t, u)$
and $(0, q) = (t, u)$.
Therefore, $q = t = u = 0$. 
\end{proof}
 
Thus, Problem~\ref{problem_ees} does not give an approximate solution 
to Problem~\ref{problem_ee}. Nevertheless, 
under an extra assumption that  $G$ is self-conjugate, 
it does provide an approximate solution with the factor~$\nicefrac{1}{2}$. 
This factor is tight and cannot be improved. 
This follows from Theorems~\ref{th.20} and~\ref{th.30}  proved below.
Before formulating them, we briefly discuss the practical issue. 

To solve Problem~\ref{problem_ees} 
we  consider a self-conjugate balanced complex polytope 
$G = {\rm cob}\, \{\bv_k ,\allowbreak \bar{\bv}_k \ | \ k = 1, \ldots N\}$. 
As we noted in Remark~\ref{r.10}, it has the same real part~$P$ as 
the balanced polytope~$G = {\rm cob}\, \{\bv_k \ | \ k = 1, \ldots , N\}$. 
Problem~\ref{problem_ees} is solved for~$G$ by the following optimisation problem: 
\begin{equation}\label{eq.cp}
\left\{
\begin{aligned}
&t_0\ \to \ \max,\quad \mbox{subject to:}\\
&\sqrt{t_{j}^2 + u_j^2} \le r_j, \ j = 1, \ldots , 2N\\
&\sum_{j=1}^{2N} r_j \, \le \, 1\\
&t_0\ba_0 \, = \, \sum_{k=1}^{N} \bigl(t_{k} \ba_k \, - \, u_{k} \bb_k\bigr)\, + \, 
\bigl(t_{k+N} \ba_k \, + \, u_{k+N} \bb_k\bigr)\\
&t_0\bb_0 \, = \, \sum_{i=1}^{\ell} \bigl(u_{k} \ba_k \, + \, t_{k} \bb_k\bigr)\, + \, 
\bigl(u_{k+N} \ba_k \, - \, t_{k+N} \bb_k\bigr)
\end{aligned}
\right. 
\end{equation}
This problem  finds the biggest $t_0$ such that 
 $t_0\bv_0$ is a balanced complex combination 
of the points $\bv_1, \ldots , \bv_N, \allowbreak \bar \bv_1, \ldots , \bar \bv_N$.
 The coefficients of this combination are $z_k = t_k + i u_k$, $k = 1, \ldots , 2N$, 
the points $\bv_k, \bar \bv_k$ correspond to the 
coefficients $z_k, z_{k+N}$ respectively. 
This is a convex conic programming problem with variables $t_0, t_k, u_k$, 
where $k = 1, \ldots , 2N$.  
It is solved by the interior point method 
on Lorentz cones.
%(see {\tt www.mosek.com} for the corresponding software). 
If $t_0 \ge  1$, then $\bv_0 \in G$ and vice versa. 

In Section~\ref{sec_numerics} we demonstrate the numerical results showing that 
the problem is efficiently solved in relatively low 
dimension 2 to 25 and for the number of ellipses up to 1000.

\bigskip

Now we are going to see that  if $\bv_0 \notin G$, then $ \, E_0 \not \subset \frac12 P$.
Dividing by two, 
we obtain an approximate solution to Problem~\ref{problem_ee} with the factor at least~$\nicefrac{1}{2}$: 
 if $\frac12 \bv_0 \notin G$, 
 then $ \, E_0 \not \subset   P$, otherwise, if $\frac12 \bv_0 \in G$, then 
 $\frac12 E_0 \subset P$. 
 
\begin{thm}\label{th.20}
A precise solution of Problem~\ref{problem_ees}
 gives an approximate solution to Problem~\ref{problem_ee} with the factor~$q\ge \frac12$. 
\end{thm}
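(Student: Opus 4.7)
My plan is to prove the contrapositive in strong form: if $E_0\subset P$ then $\tfrac12\bv_0\in G$. Granting this, the algorithm that runs Problem~\ref{problem_ees} on $\tfrac12\bv_0$ outputs ``$E_0\not\subset P$'' when $\tfrac12\bv_0\notin G$ and ``$\tfrac12 E_0\subset P$'' when $\tfrac12\bv_0\in G$ (since then $\tfrac12 E_0\subset{\rm Re}\,G=P$ by Remark~\ref{r.10}), which is exactly an approximate solution to Problem~\ref{problem_ee} with factor $\tfrac12$.

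I would translate both inclusions into support-function inequalities. From the computation preceding Theorem~\ref{th.5}, for real $\bx$ one has $\sqrt{(\bx,\ba_k)^2+(\bx,\bb_k)^2}=|(\bx,\bv_k)|$, so the hypothesis $E_0\subset P$ reads $|(\bx,\bv_0)|\le h_P(\bx)$ for all $\bx\in\re^d$, where $h_P(\bx):=\max_k|(\bx,\bv_k)|$. On the complex side, the balance and self-conjugacy $\overline G=G$ of the modified polytope make $h_G(\bx):=\sup_{\bw\in G}|(\bx,\bw)|$ a seminorm on $\co^d$ that satisfies $h_G(\bar\bx)=h_G(\bx)$, and $\bw\in G$ is characterized by $|(\bx,\bw)|\le h_G(\bx)$ for every $\bx\in\co^d$. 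It therefore suffices to prove $\tfrac12|(\bx,\bv_0)|\le h_G(\bx)$ for all complex $\bx$.

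The heart of the argument is a decomposition $\bx=\bc+i\bd$ with $\bc,\bd\in\re^d$. The upper bound is elementary: from $(\bx,\bv_0)=(\bc,\bv_0)+i(\bd,\bv_0)$ and the triangle inequality in $\co$,
\[
|(\bx,\bv_0)|\ \le\ |(\bc,\bv_0)|+|(\bd,\bv_0)|\ \le\ h_P(\bc)+h_P(\bd).
\]
For the matching lower bound $h_G(\bx)\ge\max(h_P(\bc),h_P(\bd))$, I would average $\bx$ and $\bar\bx$ using subadditivity of $h_G$ together with $h_G(\bar\bx)=h_G(\bx)$:
\[
h_G(\bx)\ =\ \tfrac12\bigl(h_G(\bx)+h_G(\bar\bx)\bigr)\ \ge\ h_G\!\left(\tfrac{\bx+\bar\bx}{2}\right)\ =\ h_G(\bc)\ =\ h_P(\bc),
\]
and the same estimate applied to $i\bx$ gives $h_G(\bx)\ge h_P(\bd)$. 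Combining the two bounds,
\[
\tfrac12|(\bx,\bv_0)|\ \le\ \tfrac12\bigl(h_P(\bc)+h_P(\bd)\bigr)\ \le\ \max(h_P(\bc),h_P(\bd))\ \le\ h_G(\bx),
\]
as required.

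I expect the genuinely delicate step to be the lower bound $h_G(\bc+i\bd)\ge h_P(\bc)$. It is precisely here that self-conjugacy $\overline G=G$---enforced by adjoining the vectors $\bar\bv_k$---is indispensable: without it one loses $h_G(\bar\bx)=h_G(\bx)$, the averaging argument collapses, and one falls back to the $q=0$ behaviour exhibited in Proposition~\ref{p.17}.
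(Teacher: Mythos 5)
Your proof is correct. It rests on the same two essential facts as the paper's proof --- that self-conjugacy of $G$ forces its complex support function to dominate the real support function of $P$ at both ${\rm Re}\,\bx$ and ${\rm Im}\,\bx$, and that the factor $2$ comes from recombining these two parts --- but it runs in the opposite direction and with a different mechanism. The paper argues contrapositively: from $\bv_0\notin G$ it extracts a separating complex functional $\bx+i\by$, exploits the substitutions $\bv\mapsto e^{-it}\bv$ and $\bv\mapsto i\bar\bv$ (the latter is where $G=\bar G$ enters) together with a case analysis on the sign of a cross term to bound the support of $G$ from below by $\sqrt{(\bx,\ba)^2+(\by,\bb)^2+(\bx,\bb)^2+(\by,\ba)^2}$, and then shows that one of $\bx,\by$ strictly separates a point of $E_0$ from $\frac12 P$. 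You instead prove the forward implication $E_0\subset P\Rightarrow\frac12\bv_0\in G$ entirely at the level of support functions, replacing the paper's interchange-plus-case-analysis by the averaging inequality $h_G(\bx)\ge h_G\bigl(\frac{\bx+\bar\bx}{2}\bigr)$ and the elementary bound $\frac12(a+b)\le\max(a,b)$; your choice of whichever of $h_P(\bc),h_P(\bd)$ is larger plays exactly the role of the paper's vector $\bp\in\{\bx,\by\}$. This is cleaner and makes the role of self-conjugacy transparent (and consistent with Proposition~\ref{p.17}). The only steps worth spelling out in a full write-up are that the membership criterion ``$\bw\in G$ iff $|(\bx,\bw)|\le h_G(\bx)$ for all $\bx\in\co^d$'' uses that $G$ is compact and balanced (both hold for a balanced complex polytope), and that $h_G(\bc)=h_P(\bc)$ for real $\bc$ because $|(\bc,\bar\bv_k)|=|(\bc,\bv_k)|$. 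Both arguments yield the same factor and the same algorithmic conclusion.
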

%As a corollary, we obtain the complex polytope method,
%giving an approximate solution of Problem~\ref{problem_ee}: 
%{\em If the solution~$t_0$ of the optimisation problem~\eqref{eq.cp}
%satisfies~$t_0 \ge 1$, then $E_0$ is contained in~$P$. If~~$t_0 < \frac12$, then 
%$E_0$ is contained in~$P$}. 

\begin{proof}
It suffices to show that if $\bv_0 \notin G$, 
then $ \, E_0 \not \subset \frac12 P$. If a point $\bv_0 = \ba_0 + i \bb_0$ 
does not belong to $G$, then it can be separated from $G$ by a nonzero 
functional $\bc = \bx+ i\by$, which means 
$$
{\rm Re}\, (\bc, \bv_0) \ > \ \sup_{\bv \in G} \, {\rm Re}\, (\bc, \bv)\, .
$$
Rewriting the scalar product in the left-hand side we obtain  
$$
 (\bx, \ba_0) \, - \, (\by, \bb_0)\ > \ \sup_{\bv \in G} \ {\rm Re}\,  (\bc, \bv) \, .
$$
Note that $e^{-it}\bv \in G$ for all~$t \in \re$. 
Substituting this for $\bv$ in the right-hand side, we get 
$$
 (\bx, \ba_0) \, - \, (\by, \bb_0)\ > 
 \ \sup_{\bv \in G, \, t \in \re} \ {\rm Re}\,  (\bc, e^{-it}\bv) \, .
$$
Since ${\rm Re}\,  (\bc, e^{-it}\bv) \, = 
\, \big((\bx, \ba) - (\by, \bb)\big)\cos t\, + \, 
\big((\bx, \bb) + (\by, \ba)\big)\sin t $
and the supremum of this value over all $t \in \re$ is equal to 
$$
\sqrt{\, \big((\bx, \ba) - (\by, \bb)\big)^2\ + \ \big((\bx, \bb) + (\by, \ba)\big)^2},
$$
 we conclude that 
\begin{equation}\label{eq.eq1}
 (\bx, \ba_0) \, - \, (\by, \bb_0)\ > 
 \ \sup_{\ba + i \bb \in G} 
 \ \sqrt{\big((\bx, \ba) - (\by, \bb)\big)^2\, + \, \big((\bx, \bb) + (\by, \ba)\big)^2}.
\end{equation}
Since $G$ is symmetric with respect to the 
conjugacy, we have  $\bar \bv \in G$ and hence  
$i \bar \bv \, = \, \bb + i\ba\, \in \, G$. 
Hence, one can interchange $\ba$ and $\bb$ in~\eqref{eq.eq1} and get 
\begin{equation}\label{eq.eq2}
 (\bx, \ba_0) \, - \, (\by, \bb_0)\ > 
 \ \sup_{\ba + i \bb \in G} 
 \ \sqrt{\big((\bx, \bb) - (\by, \ba)\big)^2\, + \, \big((\bx, \ba) + (\by, \bb)\big)^2}
\end{equation}
If $\ - \, (\bx, \ba) \cdot (\by, \bb)\, + \, (\bx, \bb) \cdot (\by, \ba) \, \ge 0$,
then\eqref{eq.eq1} yields 

\begin{equation}\label{eq.eq3}
 (\bx, \ba_0) \, - \, (\by, \bb_0)\ > 
 \ \sup_{\ba + i \bb \in G} 
 \ \sqrt{(\bx, \ba)^2 + (\by, \bb)^2\, + \, (\bx, \bb)^2 + (\by, \ba)^2}.
\end{equation}
Otherwise, if $\ - (\bx, \ba) \cdot (\by, \bb)\, + \, (\bx, \bb) \cdot (\by, \ba) \le 0$, then we apply~\eqref{eq.eq2} 
and arrive at the same inequality~\eqref{eq.eq3}. 
Since inequality~\eqref{eq.eq3} is strict, we take squares of its both parts 
and obtain that there exists $\varepsilon > 0$ such that 
\begin{equation}\label{eq.eq4}
 \big( (\bx, \ba_0) \, - \, (\by, \bb_0) \big)^2\
>
\ \sup_{\ba + i \bb \in G} \ 
\big((\bx, \ba)^2 + (\by, \bb)^2\, + \, (\bx, \bb)^2 + (\by, \ba)^2\big) 
\ + \ \varepsilon\, . 
\end{equation}
Denote by $\bp$ the vector from the set~$\{\bx, \by\}$
on which the maximum 
$$
\max_{\bp \in  \{\bx, \by\}} \ (\bp, \ba_0)^2 \, + \, (\bp, \bb_0)^2
$$
is attained. Note that $\bp$ depends on~$\bc$ and~$\bv_0$ only. 
Hence, for every 
point $\bv  = \ba + i \bb\in G$, we have 
$$
(\bp, \ba)^2 \, +  \, (\bp, \bb)^2 \ \le \ 
(\bx, \ba)^2 \, +  \, (\bx, \bb)^2\, + \, (\by, \ba)^2 \, + \, (\by, \bb)^2 \ \le \  
 \big( (\bx, \ba_0) \, - \, (\by, \bb_0)\big)^2 \, - \, \varepsilon 
$$
 On the other hand, 
 \begin{align*}
 \big( (\bx, \ba_0) \, - \, (\by, \bb_0)\big)^2  & \le  2 \, (\bx, \ba_0)^2 \, + \, 
 2(\by, \bb_0)^2  \ \\
 &\le
 2 \, \big( (\bx, \ba_0)^2 \, + \, (\bx, \bb_0)^2 \, + \, 
 (\by, \ba_0)^2 \, + \,(\by, \bb_0)\big)  \\
 &\le  
4 \, \big( (\bp, \ba_0)^2 \, + \, (\bp, \bb_0)^2 \, \big).
\end{align*}
Thus, 
$$
 (\bp, \ba_0)^2 \, + \, (\bp, \bb_0)^2 \, - \, \frac{\varepsilon}{4} \quad \ge \quad  
 \frac14 \, \big( (\bp, \ba)^2 \, + \, (\bp, \bb)^2 \, \big) \, , 
$$ 
 and consequently, 
 $$
 \sqrt{(\bp, \ba_0)^2 \, + \, (\bp, \bb_0)^2 \, - \, \frac{\varepsilon}{4}} \quad \ge \quad  
 \frac12 \, \sqrt{ (\bp, \ba)^2 \, + \, (\bp, \bb)^2 \, } \, , 
$$ 
 Now observe that the right-hand side of this inequality is 
 equal to $\sup_{\bw \in E(\ba, \bb)}\, (\bp , \bw)$
 and the 
 left-hand side is smaller than  $\sup_{\bw_0 \in E(\ba_0, \bb_0)}\, (\bp , \bw_0)$. 
 Therefore, for every pair~$\ba, \bb \in \re^d$ such that 
$\ba + i \bb\in G$, we have 
 $$
 \sup_{\bw_0 \in E(\ba_0, \bb_0)}\, (\bp , \bw_0) \quad > \quad 
 \frac{1}{2}\, \sup_{\bw \in E(\ba, \bb)}\, (\bp , \bw)\, . 
 $$
 This means that there exists a point 
 $\widehat \bw \in E(\ba_0, \bb_0)$ such that 
 $(\bp , \widehat \bw) > \frac12\, \sup_{\bw \in E(\ba, \bb)}\, (\bp , \bw)$. 
 This holds for every point $\ba + i \bb \in G$, 
 in particular, for each point $\ba_k + i \bb_k, \ k = 1, \ldots , N $.
 Hence, the linear functional $\bp$ strictly separates 
 the point $\hat \bw$ of the ellipsoid $E_0$ from 
 all ellipsoids $\frac12\, E_k$, 
 i.e.\ from their  convex hull. 
% This implies that 
Therefore,
$\widehat \bw \notin \frac12 P$ and hence $E_0 \not \subset \frac12 P$. 
\end{proof}

After Theorem~\ref{th.20} the natural question arises 
whether the approximation factor~$\nicefrac{1}{2}$
can be increased.
The following theorem shows that the answer is negative. 
 \begin{thm}\label{th.30}
The factor $q = \frac{1}{2}$ in Theorem~\ref{th.20} is sharp.  
\end{thm}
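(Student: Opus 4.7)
The plan is to construct, for each integer $N \geq 2$, an instance of Problem~\ref{problem_ees} in~$\re^2$ whose algorithmic radius $\beta^* := \sup\{t : t\bv_0 \in G\}$ equals $1/(2\cos(\pi/(2N)))$ while at the same time $E_0 \subset P$. Letting $N \to \infty$ then produces a family in which $\beta^* \to \tfrac12$ but the true containment $E_0 \subset P$ still holds, ruling out any constant better than~$\tfrac12$ in Theorem~\ref{th.20}.

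I would take $E_0$ to be the unit disc in~$\re^2$, so $\bv_0 = (1, i)$, and for $k = 1, \ldots, N$ take $E_k$ to be the real segment generated by $\bv_k = \frac{1}{\cos(\pi/(2N))}(\cos\theta_k, \sin\theta_k) \in \re^2$ with $\theta_k = \pi(k-1)/N$. Two features make this family work: $P = {\rm co}\{E_1, \ldots , E_N\}$ is exactly the regular $2N$-gon circumscribed about the unit disc (inradius $1$), so $E_0 \subset P$ automatically; and each $\bv_k$ is real, so $G = {\rm cob}\{\bv_1, \ldots, \bv_N\}$ is automatically self-conjugate, placing us in the setting of Theorem~\ref{th.20}.

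The central computation is to pin down $\beta^*$ exactly. I would phrase the defining problem as the conic program $\min \sum_k |z_k|$ subject to $\sum_k z_k \bv_k = \bv_0$ and dualise via the complex LP duality underlying~\eqref{eq.cp}, obtaining $\max {\rm Re}\,(\bu, \bv_0)$ subject to $|(\bu, \bv_k)| \leq 1$ for all $k$. The rotational symmetry of the $\bv_k$ motivates the ansatz $z_k = c\, e^{i\theta_k}$ for the primal and $\bu = \cos(\pi/(2N))(1, i)$ for the dual. The crucial ingredient is the identity $\sum_{k=0}^{N-1} e^{2\pi i k/N} = 0$ (for $N \geq 2$): it collapses the primal sum to a scalar multiple of $(1, i) = \bv_0$ and simultaneously forces $|(\bu, \bv_k)| = 1$ for every $k$. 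Both objectives then meet at $2\cos(\pi/(2N))$, yielding $\beta^* = 1/(2\cos(\pi/(2N)))$.

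To conclude, for any putative constant $c > \tfrac12$ one chooses $N$ so large that $c > 1/(2\cos(\pi/(2N))) = \beta^*$. For that~$N$, $c\bv_0 \notin G$ and yet $E_0 \subset P$, contradicting any hypothetical strengthening of Theorem~\ref{th.20} to a factor exceeding~$\tfrac12$. The main subtlety I anticipate is the dual feasibility check $|(\bu, \bv_k)| \leq 1$, which must hold with equality at every~$k$: it depends on the specific normalisation $1/\cos(\pi/(2N))$ of the $\bv_k$, the same scaling that makes $P$ just circumscribe~$E_0$. Once that alignment is noticed, the remaining verification reduces to the roots-of-unity computation above.
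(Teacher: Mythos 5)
Your proposal is correct, and it reaches the same conclusion as the paper by a genuinely different (and in some ways cleaner) construction. Both arguments exploit the same underlying phenomenon: the vector $\bv_0=\be_1+i\be_2$, whose ellipse is the unit disc, cannot be written efficiently as a balanced combination of \emph{real} (or collinear) vectors --- one loses exactly a factor of $2$. The paper realises this with an \emph{infinite} set $Q$ of collinear pairs $(\ba,\bb)$ with $|\ba|^2+|\bb|^2\le 1$, proves $t_0\le\frac12$ by projecting the two defining equations onto $\be_1$ and $\be_2$ and summing (the bound $h_k\cos(\gamma_k-\varepsilon_k\delta_k)\le 1$), and then has to approximate $G={\rm cob}(Q)$ by a finite balanced polytope in a final limiting step. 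You instead exhibit, for each $N\ge 2$, an explicit \emph{finite} witness: $N$ real vectors $\bv_k=\frac{1}{\cos(\pi/(2N))}(\cos\theta_k,\sin\theta_k)$, $\theta_k=\pi(k-1)/N$, whose segments span the regular $2N$-gon circumscribed about $E_0$, and you compute $\sup\{t:\,t\bv_0\in G\}=\frac{1}{2\cos(\pi/(2N))}$ exactly via a matching primal ansatz $z_k=c\,e^{i\theta_k}$ (collapsing by $\sum_k e^{2i\theta_k}=0$) and dual certificate $\bu=\cos(\pi/(2N))(1,i)$ with $|(\bu,\bv_k)|=1$ for all $k$; I checked both sides meet at $2\cos(\pi/(2N))$, so the computation is sound. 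Since the $\bv_k$ are real, $G$ is automatically self-conjugate, so you are legitimately in the setting of Theorem~\ref{th.20}, and letting $N\to\infty$ rules out any factor exceeding $\frac12$, exactly as required. What your route buys is explicitness: concrete finite instances with an exact rate $\frac{1}{2\cos(\pi/(2N))}$ and a clean duality certificate, avoiding the paper's approximation-of-$Q$ step. What the paper's route buys is the extremal limiting object itself, where $P$ coincides with $E_0$ and the constant $\frac12$ is attained exactly rather than only in the limit. Either argument establishes Theorem~\ref{th.30}.
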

\begin{proof}
It suffices  to give an example where this factor 
can be arbitrarily close to $\nicefrac{1}{2}$. 
Consider the set $S$ of pairs of vectors 
$(\ba, \bb) \in \re^2\times \re^2$ such that 
$\ba , \bb $
 are collinear  and  
 $|\ba|^2 + |\bb|^2 \le 1$. Then define 
$Q = \{\ba + i \bb \ | \ (\ba , \bb) \in S\}$. Thus, $Q \subset \co^2$. 

Since each  pair $(\ba , 0)$ with $|\ba| = 1$ belongs to~$S$, we see that 
the set  ${\rm Re}\, Q$ contains a unit disc centred at the origin. 
In our notation this disc can be denoted as
$E(\be_1, \be_2)$, where $\be_1 = (1,0)^T$ and $
\be_2 = (0,1)^T$. Furthermore, if $\bv \in Q$, then 
$\bar \bv \in Q$ and $e^{it}\bv \in Q$ for each $t \in \re$.
The first assertion is obvious, to prove the second one we 
observe that $e^{i\tau}\bv \, = \, \ba_{\tau} + i \bb_{\tau}$ with  
$\ba_{\tau} = \ba \cos \tau - \bb \sin \tau$ and  
$\bb_{\tau} = \ba \sin \tau + \bb \cos \tau$. 
Clearly, $\ba_{\tau}$ and $\bb_{\tau}$ are collinear and 
$|\ba_{\tau}|^2 + |\bb_{\tau}|^2 \, = \, 
|\ba|^2 + |\bb|^2 \, \le  \, 1$. 
Every point of the balanced convex hull 
$G \, = \, {\rm cob}\, (Q)$ has the form 
$\sum_{k = 1}^N z_k \bu_k\, = \,  \sum_{k = 1}^N |z_k|e^{i\tau_k}\bu_k$, where 
$z_k = |z_k| e^{i\tau_k}$ and $\bu_k \in Q, \, \sum_{k=1}^N|z_k| \le 1$. Writing $t_k = |z_k|$
and $\bv_k = e^{i\tau_k}\bu_k$ and using that $\bv_k \in Q$, 
we see that every point of $G$ has the form 
$\sum_{k = 1}^N t_k \bv_k$ with all $\bv_k$ from~$Q$
and $\sum_{k=1}^N t_k \le  1$. 

Now  let us solve Problem~\ref{problem_ees} for the set~$G$ and for the 
vector~$t_0(\be_1 + i \be_2)$. We find the maximal positive~$t$
for which this vector belongs to~$G$. 
 We have  $t_0(\be_1 + i \be_2) \, = \, \sum_{k=1}^N t_k \bv_k$
 with $\bv_k = \ba_k + i \bb_k \, \in \, Q$ and $t_k \ge 0$, $\sum_{k=1}^N t_k \le 1$.
 We are going to show that $t_0 \le \frac12$. 

Let $\ba_k$ be co-directed to the vector~$(\cos \gamma_k, \sin \gamma_k)^T$;
the vector $\bb_k$ has the direction $\varepsilon_k\, (\cos \gamma_k, \sin \gamma_k)^T$, 
where $\varepsilon_k \in \{1, -1\}$. 
Since $|\ba_k|^2 + |\bb_k|^2 \le 1$, 
it follows that there is an angle $\delta_k = \bigl[0, \frac{\pi}{2}\bigr]$
and a number $h_k \in [0,1]$ such that $|\ba_k| = h_k \cos \delta_k \, , \ 
|\bb_k| = h_k \sin \delta_k $. We have $\sum_{k=1}^N t_k \ba_k = t_0 \be_1$. 
In the projection to the abscissa, we have $\sum_{k=1}^N t_k (\ba_k, e_1) = t_0 $
and hence, 
$$
\sum_{k=1}^N t_k h_k \cos \gamma_k \, \cos \delta_k \,  = \, t_0 \, . 
$$
Similarly, after the projection of the equality $\sum_{k=1}^N t_k \bb_k = t_0 \be_2$
to the vector~$\be_2$, we get  
$$
\sum_{k=1}^N \, \varepsilon_k t_k h_k  \sin \gamma_k \, \sin \delta_k \,  = \, t_0 \, . 
$$
Taking the sum of these two equalities, we obtain 
 $$
 \sum_{k=1}^N t_k h_k \cos (\gamma_k - \varepsilon_k \delta_k) \,  = \, 2t_0 \, . 
 $$
Since all numbers $h_k \cos (\gamma_k - \varepsilon_k \delta_k)$ 
do not exceed one, we conclude that 
$$
\sum_{k=1}^N t_k \,  \ge  \, 2t_0 \, , 
$$
and therefore, $t_0 \le \frac12$. Hence, 
for the unit disc $E_0(\be_1, \be_2)$ 
and the set of ellipses $\{\, E(\ba, \bb) \ | \ \ba + i\bb  \in G \}$, 
the solution of Problem~\ref{problem_ees} 
gives the approximation for Problem~\ref{problem_ee} with the factor at most 
$\frac12$. This is not the end yet, 
since $Q$ is infinite and so $G$ is not a balanced complex 
polytope.  However,  $G$ can be approximated by   a 
 balanced polytope with an arbitrary precision. For the obtained 
 balanced polytope, 
the approximation factor is close to $\frac12$. 
Since it can be made arbitrarily close, the 
proof is completed. 
\end{proof}

\section{The corner cutting method}  % Section 7
\label{sec_methodD}

A straightforward approach to approximate solution of Problem~\ref{problem_ee} 
could be  to replace $E_0$ 
by a sufficiently close circumscribed polygon and then to decide whether 
all its vertices belong to~$P$. 
However, this idea turns out to be not efficient: 
to provide a good approximation factor this polygon 
will have many vertices and hence the algorithm will work slowly.  
We derive another approach based on step-by-step relaxation 
by cutting angles of a polygon. This procedure localizes 
the most distant point of $E_0$ from $P$ and checks whether that point belongs to~$P$. 
We begin with the following auxiliary problem~\ref{problem_pe} (\emph{point in ellipses}), 
which can be seen as a special case of Problem~\ref{problem_ee}

\begin{customproblem}{PE}  % problem 0
\label{problem_pe}
In the space~$\re^d$ there are ellipses $E_1, \ldots , E_N$
and a point~$\bw$. Find $\|\bw\|_P$, where $P = {\rm co}\, \{E_1, \ldots , E_N\}$. 
\end{customproblem}

In particular, deciding whether $\|\bw\|_P \le 1$ is equivalent to 
a special case of Problem~\ref{problem_ee} 
when the ellipse~$E_0$ degenerates to a segment 
 $[-\bw, \bw]$. This problem can be efficiently solved.
Either precisely, by the 
conic programming method in subsection~\ref{subsec_methodD-conic},
or approximately by the linear programming method 
presented in Section~\ref{sec_methodE}.

\subsection{The algorithm of corner cutting}  % Section 7.1

We begin with description of the main idea and then define a routine of the algorithm. 

\subsubsection*{The idea of the algorithm.}
It may be assumed that $E_0$ is a unit circle. We construct a sequence of 
polygons circumscribed around $E_0$ as follows. 
The initial polygon is a square.  In each iteration  we 
cut off a corner 
of the polygon with the largest~$P$-norm. So, we omit one vertex and 
add two new vertices. The cutting is by a line touching $E_0$ orthogonal 
to the segment connected to that vertex with the centre.

Let us denote by~$\nu_j$ the largest $P$-norm of vertices after the~$j^{th}$ iteration 
(the initial square corresponds to~$j=0$).  Since the norm is convex, its maximum on a polygon
is attained at one of its vertices. Hence, 
the norm of the cut  vertex is not less than the norm of each of the new vertices. 
Therefore, $\nu_{j+1} \le \nu_j$, so  the sequence 
 $\{\nu_j\}_{j\ge 0}$ is nonincreasing.  If at some step we have~$\nu_j \le 1$, 
 then all the vertices of the polygon after $j$ iterations are inside~$P$. 
 Hence, this polygon is contained in~$P$ and therefore~$E_0 \subset P$. 
 
 Otherwise, if $\nu_j > 1$, we have  $E_0 \not \subset   \nu_j \, \cos (\tau)\, 
 P$, 
 where $\tau$ is the smallest exterior angle of the resulting polygon.
 This is proved in Theorem~\ref{th.40} below. 
 Thus, the algorithm solves Problem~\ref{problem_ee} with the approximation factor~$q \ge 
 \nu_j \, \cos (\tau )$. 

{\em Comments}. In each iteration we need to find the vertex with the 
maximal $P$-norm. 
Therefore, we need to compute a norm of each vertex by solving Problem~\ref{problem_pe}. 
For this, we  compute the norms of two new vertices in each iteration.
Due to the central symmetry, one can reduce computation twice. 
Among two symmetric vertices we compute the norm of one of them and in each iteration we 
cut off both symmetric vertices.

Let $\tau$ be an arc of the unit circle connecting points~$\alpha$ and
$\beta$, we assume that $\tau < \pi$. Denote by~$\sigma = \sigma(\tau)$ the midpoint of~$\tau$
and 
$$
\bw(\tau) \  = \  
\frac{1}{\cos \bigl(\tau/2 \bigr)}\, 
\Bigl( \, 
\ba_0 \cos \sigma \, + \, \bb_0 \sin \sigma
\Bigr)\, . 
$$
Two lines touching~$E_0$ at the points corresponding to the ends of the 
arc~$\tau$ meet at~$\bw$.

\subsection*{The algorithm}

\subsubsection*{Initialization}

Choose  the maximum number of iterations~$J$. 
We split the upper unit semicircle (the part of the unit circle in the 
upper coordinate half-plane) into equal arcs~$\tau_1, \tau_2$ 
and compute the $P$-norms of the points~$\bw(\tau_i), \, i = 1,2$. 
Denote by~$\nu_0$ the maximum of those two norms and 
set $\cT = \{\tau_1, \tau_2\}$.

\subsubsection*{Main loop -- the $j^{th}$ iteration} 

We have a collection  $\cT$ of $j+1$ disjunct open arcs
forming the upper semicircle, the $P$-norms 
of all $j+1$ points~$\bw(\tau), \, \tau \in \cT$, and 
the maximal norm~$\nu_{j-1}$. 
Find an arc~$\tau$ with the biggest $P$-norm and 
replace that arc by two its halves~$\tau_1, \tau_2$. Update~$\cT$ and compute the $P$-norms of 
the points $\bw(\tau_1)$ and $\bw(\tau_1)$.  Set $\nu_j$
equal to the maximum of those two norms and of~$\nu_{j-1}$.   
\begin{itemize}
\item 
If $\nu_j \le 1$, then $E_0 \subset P$ and STOP. 

\item
If $\nu_j > \frac{1}{\cos (\tau)}$, where $\tau$
is the minimal arc in~$\cT$, then $E_0 \not \subset  P$ and STOP.

\item
If $1 <  \nu_j  \le \frac{1}{\cos (\tau)}$ and $j=J$, then $E_0 \not \subset \cos \,(\tau) \, P$. 

\item  
Otherwise go to the next iteration. 
\end{itemize}
  
\begin{thm}\label{th.40}
The corner cutting algorithm after $j$ iterations 
solves Problem~\ref{problem_ee} with the 
approximation factor~$q \ge \nu_j\cos(\tau)$, where 
$\tau$ is the minimal arc in~$\cT$.  
\end{thm}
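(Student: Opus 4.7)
My plan is to split the argument into two cases based on $\nu_j$. As a setup, I would reduce to the situation where $E_0$ is the Euclidean unit disc by an affine normalization, which preserves all containment relations. Under this normalization, each polygon vertex takes the form $\bw(\tau_i)=\hat{\bw}(\tau_i)/\cos(\tau_i/2)$, where $\hat{\bw}(\tau_i)\in E_0$ is the midpoint of the arc $\tau_i\in\cT$. The circumscribed-polygon construction guarantees $E_0\subset Q_j$, and since $\|\cdot\|_P$ is a norm (hence convex), its maximum on $Q_j$ is attained at a vertex, giving $\max_{Q_j}\|\cdot\|_P=\nu_j$ and in particular $\max_{E_0}\|\cdot\|_P\le\nu_j$.

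In the case $\nu_j\le 1$, every vertex satisfies $\|\bw(\tau_i)\|_P\le 1$ and so lies in $P$; by convexity of $P$ we get $Q_j\subset P$, and combined with $E_0\subset Q_j$ this yields $E_0\subset P$. Thus the algorithm decides $qE_0\subset P$ with $q=1\ge\nu_j\cos\tau$ (using $\cos\tau\le 1$).

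In the case $\nu_j>1$, I would exhibit a point of $E_0$ whose $P$-norm is at least $\nu_j\cos\tau$, which establishes $E_0\not\subset\nu_j\cos\tau\cdot P$ and hence the approximation factor. The witness comes from the algorithm's cutting history: the current minimum arc $\tau\in\cT$ was created at some iteration $j'\le j$ by splitting a larger arc of length $2\tau$. The pre-split vertex $\bw'$ was the max-norm vertex at iteration $j'-1$, so $\|\bw'\|_P=\nu_{j'-1}$, and it sits at radial distance $1/\cos\tau$ from the origin. Its radial projection $\hat{\bw}'=\bw'\cos\tau$ is the tangent point produced by that cut, lies on $E_0$, and by collinearity with the origin satisfies $\|\hat{\bw}'\|_P=\nu_{j'-1}\cos\tau$. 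The already-established non-increasing property of $\{\nu_k\}$ (which follows from the convexity argument used to bound new vertex norms by the cut vertex's norm) yields $\nu_{j'-1}\ge\nu_j$, and therefore $\|\hat{\bw}'\|_P\ge\nu_j\cos\tau$.

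The delicate part is upgrading this to the strict inequality $\|\hat{\bw}'\|_P>\nu_j\cos\tau$ required for the strict non-containment $E_0\not\subset\nu_j\cos\tau\cdot P$. This is automatic whenever $\nu_{j'-1}>\nu_j$ (the generic case). In the degenerate regime where $\{\nu_k\}$ stays constant on $[j'-1,j]$, a supplementary witness is the radial projection $\hat{\bw}^*=\bw^*\cos(\tau^*/2)$ of the current max-norm vertex $\bw^*$, satisfying $\|\hat{\bw}^*\|_P=\nu_j\cos(\tau^*/2)$; obtaining the bound then reduces to verifying $\tau^*\le 2\tau$. This is the main obstacle: it requires an inductive analysis of the cutting discipline, exploiting that the algorithm always cuts the max-norm vertex, so in the constant-$\nu$ regime all max-norm vertices maintain arcs of comparable size to $\tau$. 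Combining the historical tangent-point witness with this supplementary radial-projection witness covers all subcases and yields the claimed approximation factor $q\ge\nu_j\cos\tau$.
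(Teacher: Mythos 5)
Your core argument coincides with the paper's proof: the witness is the tangency point $\cos(\tau)\,\bw(2\tau)\in E_0$ created when the mother arc of the minimal arc was cut; since that mother arc carried the maximal norm at the moment it was cut and $\{\nu_i\}$ is nonincreasing, homogeneity of $\|\cdot\|_P$ gives $\|\cos(\tau)\,\bw(2\tau)\|_P\ge\nu_j\cos\tau$, and together with the trivial case $\nu_j\le 1$ this is the entire proof. The ``delicate part'' you introduce is a non-issue: the algorithm asserts $E_0\not\subset P$ only when $\nu_j>1/\cos\tau$, in which case the non-strict bound already yields $\|\cos(\tau)\,\bw(2\tau)\|_P\ge\nu_j\cos\tau>1$ and hence strict non-membership in $P$; the paper states and uses only the non-strict estimate, so no upgrade to a strict inequality is required. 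Moreover, the supplementary branch you sketch would not work as stated: in the constant-$\nu$ regime the current maximal vertex can sit on an arc $\tau^*$ much larger than $2\tau$ (for instance when the maximum is attained with ties and an untouched large arc carries the maximal norm), so $\tau^*\le 2\tau$ is not a consequence of the cutting discipline --- fortunately you do not need it.
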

\begin{proof}
Let~$\tau$ be the smallest arc  after $j$ iterations. Suppose this arc appears after the
$k^{th}$ iteration, $k\le j$. Then, its mother arc (let us call it~$2\tau$) had the biggest 
value $\|\bw(\vardot)\|_{P}$ among all arcs in the $k$th iteration. 
This means that $\|\bw(2\tau)\|_{P} = \nu_k$. Since the sequence~$\{\nu_i\}_{i\ge 0}$ is 
nonincreasing, we have $\nu_k \ge \nu_j$. 
The point~$\bx = \cos(\tau)\, \bw(2\tau)$ lies on~$E_0$. 
It does not belong to~$P$ precisely when~$\|\bx\|_P > 1$, i.e.\ when 
 $\bw(2\tau)  \, > \, \frac{1}{\cos (\tau)}$.  
 Thus, if $\nu_j  \, > \, \frac{1}{\cos (\tau)}$, then 
 $\bw(2\tau) \, = \, \nu_k   \, > \, \frac{1}{\cos (\tau)}$, and hence 
 $\bx \notin P$. Therefore, the inequality $\nu_j  \, > \, \frac{1}{\cos (\tau)}$
 implies that $E_0$ is not contained in~$P$.
\end{proof}

 The length of each arc has the form~$2^{-s}\pi$, where $s$ is the number of double 
 divisions to arrive at that arc. We call this number the {\em level} of the arc. 
 So, the original arcs of length $\nicefrac{\pi}{2}$  are those of level one. 
   
\subsubsection*{The complexity of the corner cutting algorithm}

To perform $j$ iterations 
one needs to solve Problem~\ref{problem_pe} for $j+2$ points~$\bw(\vardot)$. 
So, the complexity of the algorithm is defined by the complexity of solution
of Problem~\ref{problem_pe}. Below, 
in Sections~\ref{subsec_methodD-conic} and~\ref{sec_methodE} we derive two 
methods of its solution,
based on different ideas and compare them by numerical experiments. 
The approximation factor is $\cos (\tau) \, = \, \cos (2^{-s}\pi)\, = \, 
1 \, - \, 2^{-2s-1}\pi^2 \, + \, O(2^{-4s})$, where $s$ is the maximal level 
of the intervals after $j$ iterations.  Already for 
$s=2$ (after \emph{one} iteration) the approximation factor is~$q= \cos (\frac{\pi}{4}) =  \frac{\sqrt{2}}{2}$, 
which is better than in the complex polytope method, where  $q = \frac12$. 
For $s=3$ (after \emph{at most} three iterations),
 we have~$q = \cos (\frac{\pi}{8}) =  0.923\ldots$,
for $s=5$, we have~$q= 0.995\ldots$, for $s=10$, we have~$q > 1 - 10^{-5}$. 
In the worst case reaching the level~$s$ requires $j=2^s-1$ iterations. However, in practice 
it is much faster. Numerical experiments show that $j$ usually does not exceed $s+2$. 

In each iteration of the corner cutting algorithm we need to find the $P$-norm of the 
newly appeared vertices of the polygon. This means that we solve Problem~\ref{problem_pe}
for those vertices. 
The way of arriving at the solution actually defines the efficiency of the whole algorithm. 
We present two different methods and compare them.

\subsection{Solving Problem~\ref{problem_pe} via conic programming}
\label{subsec_methodD-conic}  %  Section 7.1

The norm $\|\bw\|_P$ is equal  to the minimal $r\in\re$
such that $\bw \in r P$, i.e.\
the minimal possible sum of nonnegative numbers 
$r_1, \ldots , r_j\in\re$ such that 
 $\bw = \sum_{j=1}^N r_j E_j$. 
 Thus, we obtain
 \begin{equation}\label{eq.w-conic-ell}
\left\{
\begin{aligned}
&r =  \min\, \sum_{j=1}^N r_j \quad\text{subject to}\\
&\tau_j \in [0, 2\pi), \quad j = 1,\ldots , N, \\
&\sum_{j=1}^N r_j\ba_j\cos \tau_j\,  \, + \, r_j\bb_j\sin \tau_j \ = \ \bw,\\
&\, r_j  \ge 0\, , \quad j=1, \ldots , N\, .\\
\end{aligned}
\right. 
\end{equation}
Changing variables $c_j = t_j\cos\tau_j, \, s_j = t_j\sin\tau_j $
 we obtain the conic programming problem 
\begin{equation}\label{eq.w-conic}
\left\{
\begin{aligned}
\text{minimize } &\sum_{j=1}^N r_j \quad\text{subject to}\\
&\sum_{j=1}^N c_j\ba_j + s_j\bb_j \ = \ \bw,\\
&\sqrt{c_j^2 + s_j^2} \ \le  \ r_j\, , \quad j=1, \ldots , N.\\
\end{aligned}
\right. 
\end{equation}
  with $3N$ variables $r_j, c_j, s_j\in\re$ and 
  $N(d+2)$ constraints. Among these constrains, there are 
  $N(d+1)$ linear and only $N$ quadratic ones, but the latter 
actually  defines the complexity of this problem. 
The problem is solved by conic programming. 
This can be done efficiently for dimensions $d\le 20$ and number of ellipsoids $N \le 1000$.

  The value $r  = \, \min\, \sum_{j=1}^N r_j $ of the problem~\eqref{eq.w-conic} 
  is equal to the norm~$\|\bw\|_P$. In particular, $\bw \in P$
  if and only if~$r\le 1$. 
  
  In the next section we introduce the second approach,
   when  the conic programming~\eqref{eq.w-conic} problem is approximated 
  with a linear programming one with precision 
  %that decays exponentially in the number of extra variables. 
  that increases exponentially with the number of extra variables.

\section{The projection method}  % Section 8
\label{sec_methodE}

The corner cutting  method makes  use of a polygonal approximation 
of the ellipse~$E_0$. Can we go further and approximate all the $N$ ellipses 
$E_1, \ldots , E_N$ and thus approximate Problem~\ref{problem_pe} 
with a linear programming (LP) problem?
In principle, this is possible, but very inefficient.
Cutting  corners of $N$ polygons is expensive and slow.
If we do not involve cutting but just approximate 
each ellipse by a polygon,  the situation 
will be still worse due to a large total number of vertices of all polygons. Nevertheless, 
each approximating polygon can be build much cheaper if we present it as a projection 
 of a higher dimensional polyhedron. This technique was suggested by 
 Ben-Tal and  Nemirovski~\cite{BTN01} for approximating quadratic problems by LP problems. 
 See also~\cite{G10} for generalizations to other classes of functions.  
 We briefly describe this method (with slight modifications) and then 
 apply it to Problem~\ref{problem_pe}. Note that in contrast to the conic programming, here we 
 obtain only an approximate solution of Problem~\ref{problem_pe}.
 This is, however, not a restriction, since  
 the corner cutting algorithm also gives only an approximate solution 
 for Problem~\ref{problem_ee}. 
 If $q_1$ and $q_2$ are approximation factors of those two problems, then 
 the resulting approximation factor is~$q_1q_2$. If $q_i = 1-\varepsilon_i$
 with a small $\varepsilon_i, \, i=1,2$, then~$q_1q_2\, > \, 1-\varepsilon_1-\varepsilon_2$.

\subsection{A fast approximation of ellipses}  % Section 8.1

The projection method realizes a polygonal approximation of ellipses 
by solving a certain LP problem
and  the precision of this approximation increases
exponentially in the LP problem input. 
This is done by an iterative algorithm, whose main loop is a doubling of a convex figure. 

\subsubsection*{Doubling of a figure}
 Consider an arbitrary  figure~$F\subset\re^2$ 
located in the lower half-space of the Cartesian plane. 
Then the set 
\begin{equation}\label{eq.double0}
F_0  \ = \bigl\{ (x', y')^T \ \bigr| \ x' = x, \, |y'| \le -y ,  \   (x, y)^T  \in F\bigr\}
\end{equation}
is the convex hull of~$F$ with its reflection about the abscissa, see Figure~\ref{fig_doubling}.
Indeed, each point~$A = (x, y)^T  \in F$ produces a vertical  segment
$\{(x , y') \ | \  y' \in [y, -y]\}$ which connects~$A$ with its reflection~$A'$
about the abscissa. Those segments fill the set~$F_0$. 

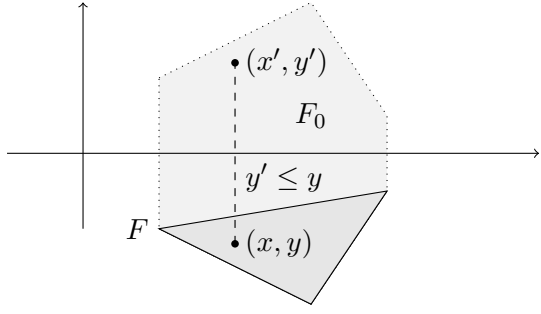
\begin{figure}
\small
    \centering
    \newcommand\Ax{-1}  \newcommand\Ay{-1}
    \newcommand\Bx{ 1}  \newcommand\By{-2}
    \newcommand\Cx{ 2}  \newcommand\Cy{-.5}
    \newcommand\ptx{ 0}  \newcommand\pty{-1.2}
    
    \begin{tikzpicture}

    \coordinate (abscm) at (-3,  0 );
    \coordinate (abscp) at (4,  0 );
    \coordinate (A)  at (\Ax, \Ay);
    \coordinate (B)  at (\Bx, \By);
    \coordinate (C)  at (\Cx, \Cy);

    \draw[-] (A) -- (B);
    \draw[-] (B) -- (C);
    \draw[-] (C) -- (A);

    \draw[draw=black, fill=gray!10, dotted] (\Ax,-\Ay) -- (\Bx,-\By) -- (\Cx,-\Cy) -- (C) -- (B) -- (A) -- cycle;
    \node at (1,.5) {$F_0$}; %lim
    \draw[draw=black, fill=gray!20 ] (A) -- (B) -- (C) -- cycle node[left] {$F$};
    
    \draw[->] (-3,0) -- (4,0);
    \draw[->] (-2,-1) -- (-2,2);
    \fill (\ptx,\pty) circle (.5mm) node[right] {$(x,y)$};
    \fill (\ptx,-\pty) circle (.5mm) node[right] {$(x',y')$};
    \draw[line width=0.15mm, dashed] (\ptx,\pty) -- (\ptx,-\pty) node[midway, below right]{$y'\leq y$};

    \end{tikzpicture}
    \caption{Set $F$ and the convex hull with its reflection at the abscissa $F_0={\rm co}\{F,F'\}$.}%
    \label{fig_doubling}
\end{figure}

In the same way one can double a figure~$F$ about an arbitrary line
passing through the origin provided~$F$ lies on one side with respect to this line. 
Let a line $\ell_{\alpha}$ be defined by the equation~$y = x\tan \alpha$; 
it makes the  angle~$\alpha \in\bigl[ 0, \pi \bigr]$ with the abscissa. 
After the clockwise rotation by the angle~$\alpha$ the line $\ell_{\alpha}$ 
becomes the abscissa and $F$ becomes a figure~$F'$ located in the lower half-plane. 
Since this rotation is defined by the matrix
$$
R_{\alpha} \ = \ 
\left(
\begin{array}{rr}
\cos \alpha & \sin \alpha\\
-\sin \alpha & \cos \alpha
\end{array}
\right)\, , 
$$
it follows from formula~\eqref{eq.double0} that 
the figure~$F_{\alpha}$, the convex hull of~$F$ with its reflection about the 
line~$\ell_{\alpha}$,
consists of points~$(x_1, y_1)$ 
satisfying the following system  of inequalities: 
\begin{equation}\label{eq.double-alpha}
\left\{
\begin{aligned}
x_1 \cos \alpha \, + \, y_1 \sin \alpha & = x \cos \alpha \, + \, y \sin \alpha \\
\bigl| - x_1 \sin \alpha \, + \, y_1 \cos \alpha \bigr| & \le
 x \sin \alpha \, -  \, y \cos \alpha\\
(x, y)^T \in F
\end{aligned}
\right. 
\end{equation}

\subsubsection*{Construction of a regular $2^n$-gon} 
Now we describe the algorithm of recursive doubling of a polygon. 

We take an arbitrary radius~$r > 0$, denote $\alpha_m = 2^{-m}\, \pi \, , m\ge 0$, and 
consider  an isosceles  triangle~$AOB$, 
where $A=(r,0)^T$, 
$B = (r\cos \alpha_n , r\sin \alpha_n)^T$, 
and $O$ is the origin. 
Double this triangle about the line~$\ell_{\alpha_n} = OB$, then 
double the obtained quadrilateral about~$\ell_{\alpha_{n-1}}$
(the lateral side different from~$OA$), then about~$\ell_{\alpha_{n-2}}$, etc..
After $n$ doublings (the last one is about $\ell_1$, which is abscissa)
we get the regular $2^n$-gon inscribed in the circle of radius~$r$. 
We denote this polygon by~$rT_n$. Thus, $T_n$ is the 
$2^n$-gon inscribed in the unit circle. 
Note that the initial triangle~$AOB$ is defined by the 
system of linear inequalities
~$0\le y\le x\tan \alpha_n$ and 
$x+y\tan \alpha_{n+1} \le r$. 

Thus, we obtain the following description of the set~$rT_n$, which is a regular 
$2^n$-gon inscribed in the circle of radius~$r$:  
$$
rT_n \ = \ \bigl(x_{2n+1}, x_{2n+2} \bigr)^T \ : 
$$
\begin{equation}\label{eq.rTn}
\left\{
\begin{aligned}
&0 \ \le \ x_2 \ \le \ x_1\tan \alpha_{n-1}&&\\
& x_1\ +\ x_2 \tan \alpha_{n}\  \le \ r&&\\
&\mbox{for } \ k \ = \ 1, \dots , n\, :&&\\
&\phantom{\big|+\,}x_{2k+1} \cos \alpha_{n-k} \, + \, x_{2k+2} \sin \alpha_{n-k} \phantom{\big|}& =
x_{2k-1} \cos \alpha_{n-k} \, + \, x_{2k} \sin \alpha_{n-k} \\
&\big| - x_{2k+1} \sin \alpha_{n-k} \, + \, x_{2k+2} \cos \alpha_{n-k} \big| & \le
 x_{2k-1} \sin \alpha_{n-k} \, -  \, x_{2k} \cos \alpha_{n-k} \\
\end{aligned}
\right. 
\end{equation}
This is a linear system of inequalities with variables~$r, x_1, \ldots , x_{2n+2}$. 
The inequality with modulus $|a|\le b$ is replaced by the system 
$a \le b, \, -a \le b$.  The system~\eqref{eq.rTn}
consists of $3n+3$ linear constraints (equations and inequalities) with  $2n+3$ variables. 
For all vectors $X = (x_1, \ldots , x_{2n+2})^T$ satisfying system~\eqref{eq.rTn}, 
the vector composed by the two last components $(x_{2n+1}, x_{2n+2})^T$
fills the regular $2^n$-gon. So, this  $2^n$-gon is a projection of 
a $(2n+2)$-dimensional polyhedron to the plane. This 
polyhedron has $3n+3$ facets.  

\subsubsection*{Construction of an affine-regular $2^n$-gon inscribed in an ellipse} 

For and arbitrary ellipse~$E(\ba, \bb)$, the 
point~$x_{2n+1}\ba \, + \, x_{2n+2}\bb$ runs 
over an affine-regular $2^n$-gon inscribed in the ellipse~$rE(\ba, \bb)$
as the vector~$X = \bigl(x_1, \ldots , x_{2n+1}, x_{2n+2}\bigr)$ 
runs over the set of solutions of the linear system~\eqref{eq.rTn}
 with this value of~$r$.

\subsection{Solving Problem~\ref{problem_pe} by the fast polygonal approximation}  % Section 8.2

We approximate all ellipses $E_j = E(\ba_j, \bb_j), \, j = 1, \ldots , N$
by polygons and then decide if~$\bw \in P$ with some approximation factor.

We fix a natural~$n$ and nonnegative numbers~$r^{(1)}, \ldots , r^{(N)}$ such that 
$\sum_{j=1}^N r^{(j)} = 1$. For each $j$, we consider  the affine-regular polytope 
$$
r_jT_{n}^{(j)} \ = \ x_{2n+1}^{(j)}\ba_j \, + \, x_{2n}^{(j)}\bb_j
$$
inscribed in~$E_j$, 
where 
$$
\Bigl(r_j , X^{(j)}\Bigr) \ = \ \Bigl(r_{j}, x_1^{(j)}, \ldots , x_{2n+1}, x_{2n+2}^{(j)}\Bigr)^T
$$ 
is a feasible vector for the linear system~\eqref{eq.rTn}. 
If~$\bw \in r^{(1)}T_n^{(1)} + \cdots + r^{(N)}T_{n}^{(N)}$, then 
$\bw \in r^{(1)}E_1 + \cdots + r^{(N)}E_N$. Therefore, 
$\bw \in P$ whenever there exist 
 numbers $r^{(j)}\ge 0$ such that $\sum_{j=1}^N r^{(j)} = 1$ and  
 $\bw \in r^{(1)}T_n^{(1)} + \cdots + r^{(N)}T_{n}^{(N)}$. 
 Hence,  the assertion $\bw \in P$ is decided by the following LP problem: 
  \begin{equation}\label{eq.w-lin}
\left\{
\begin{array}{lcll}
\displaystyle\sum_{j=1}^N r^{(j)} \quad \to \quad \min
{} & {} & {} & {}\\
0 \ \le \ x_2^{(j)} \ \le \ x_1^{(j)}\tan \alpha_{n-1}, & {} & {}\\
{} & {} & {} & {}\\
 x_1^{(j)}\ +\ x_2^{(j)} \tan \alpha_{n}\  \le \ r^{(j)} , & {} & {}\\
 {} & {} & {} & {}\\
\phantom{\bigl|-}x_{2k+1}^{(j)} \cos \alpha_{n-k} \, + \, x_{2k+2}^{(j)} \sin \alpha_{n-k}\phantom{\bigl|} & = &  
x_{2k-1}^{(j)} \cos \alpha_{n-k} \, + \, x_{2k}^{(j)} \sin \alpha_{n-k}, \\
{} & {} & {} & {}\\
\bigl| - x_{2k+1}^{(j)} \sin \alpha_{n-k}^{(j)} \, + 
        \, x_{2k+2}^{(j)} \cos \alpha_{n-k} \bigr| & \le &   
 x_{2k-1}^{(j)} \sin \alpha_{n-k} \, -  \, x_{2k} \cos \alpha_{n-k}^{(j)},\\
   {} & {} & {} & {}\\
 r^{(j)}\ \ge \  0\ ; & {} & {} &{}\\
 {} & {} & {} & {}\\
 \ k \ = \ 1, \dots , n, 
  \ j \ = \ 1, \dots , N,& {} & {} & {}\\
   {} & {} & {} & {}\\
\displaystyle \bw \ = \ \sum_{j=1}^N x_{2n+1}^{(j)}\ba_j \, + \, x_{2n+2}^{(j)}\bb_j\ ,& {} & {} & {}
\end{array}
\right. 
\end{equation}
in the variables $r^{(j)}, x_{s}^{(j)}$, $j=1, \ldots , N$, $s=1, \ldots , 2n+2\, . 
 $   Let us remember that~$\alpha_m = 2^{-m}\pi$. 
 The value of this problem~$r \, = \, \sum_{j=1}^N r^{(j)}$ 
 is the minimal number such that $\bw$ belongs to the 
 set $rP_n$, where $P_n = {\rm co}\, \{T_n^{(1)}, \ldots , T_n^{(N)}\}$. 
 In other words, $r = \|\bw\|_{P_n}$. 
 In particular, $\bw \in P_n$ precisely when $r\le 1$.

The LP problem~\eqref{eq.w-lin} 
has  $(2n+3)N$ variables 
$r^{(j)}, \, x_{s}^{(j)}$ and 
$(3n+4)N + d+1$ linear  constraints (equations and inequalities). 
Note that the matrix of this problem possesses only~$(12n + 2d + 7)N  +d $ nonzero coefficients, 
i.e.\ the total number of nonzero coefficients is linear in the size of the matrix. 
On the other hand,  the product of the number of variables times the number of constraints
exceeds $6n^2N^2 + 2Nd$. Thus, this problem  is very sparse. 

Since $P_n \subset P$, it follows that $\bw \in P$, whenever~$r\le 1$. 
In fact, problem~\eqref{eq.w-lin} provides an approximate solution 
to Problem~\ref{problem_pe} with the factor $q = \cos(2^{-n}\pi)$.
\begin{thm}\label{th.100}
If $r$ is the value of the LP problem~\eqref{eq.w-lin}, then
for every~$\omega \in \re^d$, we have 
$ \  r\, \cos (2^{-n}\pi) \, \le \, \|\bw\|_P \, \le \, r $. 
\end{thm}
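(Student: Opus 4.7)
The plan is to interpret the LP value $r$ as the Minkowski functional $\|\bw\|_{P_n}$ of the polyhedral body $P_n = \operatorname{co}\{T_n^{(1)}, \ldots, T_n^{(N)}\}$, and then sandwich $P_n$ between $\cos(\alpha_n)\,P$ and $P$. The two set inclusions immediately translate to the two norm inequalities of the theorem. Here $\alpha_n = 2^{-n}\pi$ as defined before.

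The upper bound $\|\bw\|_P \le r$ is essentially automatic. First I would note that by construction each $T_n^{(j)}$ is an affine-regular $2^n$-gon inscribed in the filled ellipse $E_j$, so $T_n^{(j)} \subset E_j$, and consequently $P_n \subset P$. A smaller body has a larger Minkowski functional, hence $\|\bw\|_P \le \|\bw\|_{P_n} = r$, where the last equality is precisely the statement that the LP~\eqref{eq.w-lin} computes $\inf\{\sum r^{(j)} : \bw \in \sum r^{(j)} T_n^{(j)}\}$.

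The lower bound requires the reverse-type inclusion $\cos(\alpha_n)\,P \subset P_n$. The key geometric fact is that a regular $2^n$-gon inscribed in the unit circle contains the concentric disk of radius equal to its apothem $\cos(\pi/2^n) = \cos(\alpha_n)$. I would push this fact through the affine map $\phi_j \colon \re^2 \to \re^d$, $\phi_j(x,y) = x\ba_j + y\bb_j$. The polygon $T_n$ (the model regular $2^n$-gon inscribed in the unit circle) maps under $\phi_j$ to $T_n^{(j)}$, while the inscribed disk of radius $\cos(\alpha_n)$ maps to $\cos(\alpha_n)$ times the filled ellipse $E_j$. Since $\phi_j$ is linear and preserves the inclusion, $\cos(\alpha_n)\,E_j \subset T_n^{(j)}$ for each $j$. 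Taking convex hulls yields $\cos(\alpha_n)\,P \subset P_n$. From this inclusion, for every $\bw$ one has $\|\bw\|_{P_n} \le \|\bw\|_{\cos(\alpha_n)P} = \cos(\alpha_n)^{-1}\|\bw\|_P$, which rearranges to $r\cos(\alpha_n) \le \|\bw\|_P$, completing the two-sided estimate.

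No step of the argument looks genuinely hard; the only point that needs a short justification is the apothem identity and the way it is transported by the (possibly non-invertible, if $\ba_j,\bb_j$ are dependent) affine map $\phi_j$. In the degenerate case where $\ba_j,\bb_j$ are collinear the ellipse $E_j$ becomes a segment, $T_n^{(j)}$ degenerates to the same segment, and the inclusion $\cos(\alpha_n)E_j \subset T_n^{(j)}$ is trivial, so no subtlety is lost. Thus the proof reduces to the single elementary planar remark about the apothem of a regular $2^n$-gon and a transport-of-structure argument to the ambient $\re^d$.
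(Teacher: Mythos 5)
Your proof is correct and follows essentially the same route as the paper: both rest on the fact that the ratio of the inradius to the circumradius of a regular $2^n$-gon is $\cos(2^{-n}\pi)$, giving the sandwich $\cos(2^{-n}\pi)\,P \subset P_n \subset P$ and hence the two norm inequalities via $r=\|\bw\|_{P_n}$. Your write-up is in fact a little more careful than the paper's (which compresses this to one sentence and, read literally, inverts the scaling factor in the inclusions), and your remark on the degenerate collinear case is a harmless extra.
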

\begin{proof}
Since the ratio between the radii of the inscribed and the circumscribed circles
of a regular $2^n$-gon is equal to~$q = \cos (2^{-n}\pi)$, we see that 
$E_j \subset q T^{(j)}_n$ for each~$j$. Consequently, $P \subset qP_n$
and hence $\|\bw\|_{P} \ge   \|\bw\|_{P_n}$, from which the theorem follows. 
\end{proof}

\begin{cor}\label{c.10}
If $r \le 1$, then  
$\bw \in P$, otherwise $\bw \notin \cos(2^{-n}\pi) P$. 
\end{cor}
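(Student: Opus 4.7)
The plan is to deduce the corollary directly from the two-sided bound
$r\cos(2^{-n}\pi)\le\|\bw\|_P\le r$ established in Theorem~\ref{th.100}, by splitting on the sign of $r-1$.

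First, suppose $r\le 1$. The upper bound of Theorem~\ref{th.100} immediately gives $\|\bw\|_P\le r\le 1$, so $\bw\in P$ by definition of the Minkowski functional. This is the easy direction and amounts only to reading off the right inequality.

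Next, suppose $r>1$. Applying the lower bound of Theorem~\ref{th.100} yields
\[
\|\bw\|_P\ \ge\ r\cos(2^{-n}\pi)\ >\ \cos(2^{-n}\pi),
\]
which is exactly the statement $\bw\notin \cos(2^{-n}\pi)\,P$, since $\bw\in \lambda P$ is equivalent to $\|\bw\|_P\le \lambda$.

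There is essentially no obstacle: both cases are one-line consequences of Theorem~\ref{th.100}. The only thing to be careful about is the strictness in the second case (we use $r>1$, not $r\ge 1$, to get a strict inequality and hence genuine exclusion from $\cos(2^{-n}\pi)P$), and the use of the convention that $\|\cdot\|_P$ denotes the Minkowski functional of the symmetric convex body $P$, so that $\bw\in\lambda P \iff \|\bw\|_P\le\lambda$ for $\lambda>0$. No further geometric argument is needed.
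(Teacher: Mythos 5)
Your proof is correct and is essentially the argument the paper intends: the corollary is an immediate consequence of the two-sided bound $r\cos(2^{-n}\pi)\le\|\bw\|_P\le r$ of Theorem~\ref{th.100}, read off case by case exactly as you do. The paper gives no separate proof, and your attention to the strict inequality needed for exclusion from $\cos(2^{-n}\pi)P$ is the right point of care.
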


Since  $\cos (2^{-n}\pi) \, = \, 1 \, - \, 2^{-2n-1}\pi^2\, + \, O(2^{-4n})$,
we see that already for small values of~$n$ we obtain a very sharp estimate.
The rate of approximation for $n \le 8$ is given in Table~1.  

\begin{table}
\begin{center}
\begin{tabular}{c|c c c c c c}
$n$ & 3 & 4 & 5 & 6 & 7  & 8 \\
\hline
$\cos (2^{-n}\pi)$ & 0.9238 & 0.9807 &  0.9951  & 0.9987 & 0.9996  & 0.9999  \\
\end{tabular}
\caption{ \small The partial approximation factor $q_1$ 
for Problem~\ref{problem_pe} for small~$n$ rounded to four decimal places}
\end{center}
\end{table}
For $n=12$,  we have~$q > 1  -   10^{-6}$;  for $m=17$,  we have~$q > 1 - 10^{-9}$.

\section{Numerical results}  % Section 9
\label{sec_numerics}

\begin{figure}[p]
    \vspace*{0cm}
%\vfill
    \small
	\centering
    Complex polytope method\\
	\includegraphics[width=0.49\linewidth]{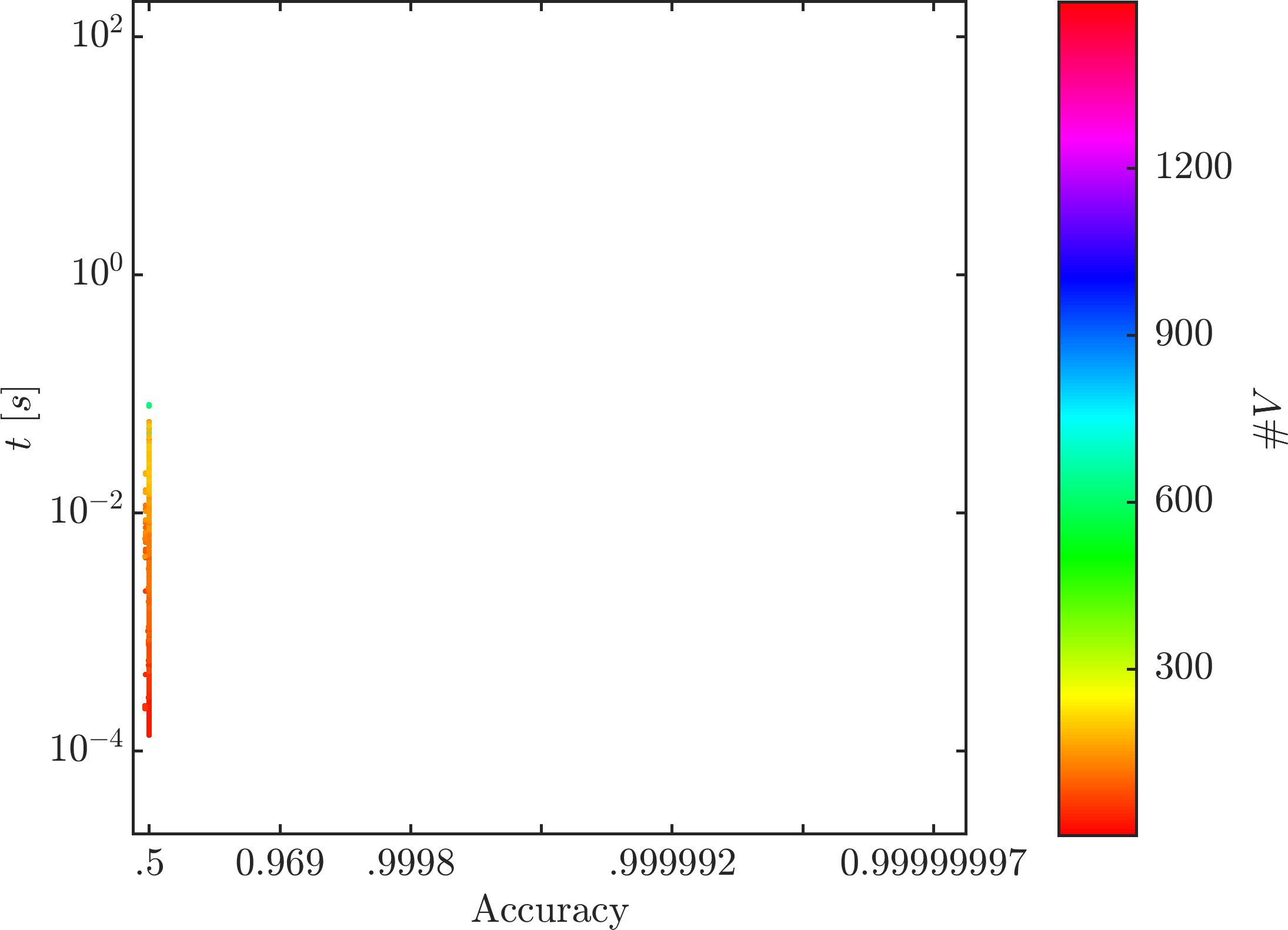}
   	\includegraphics[width=0.49\linewidth]{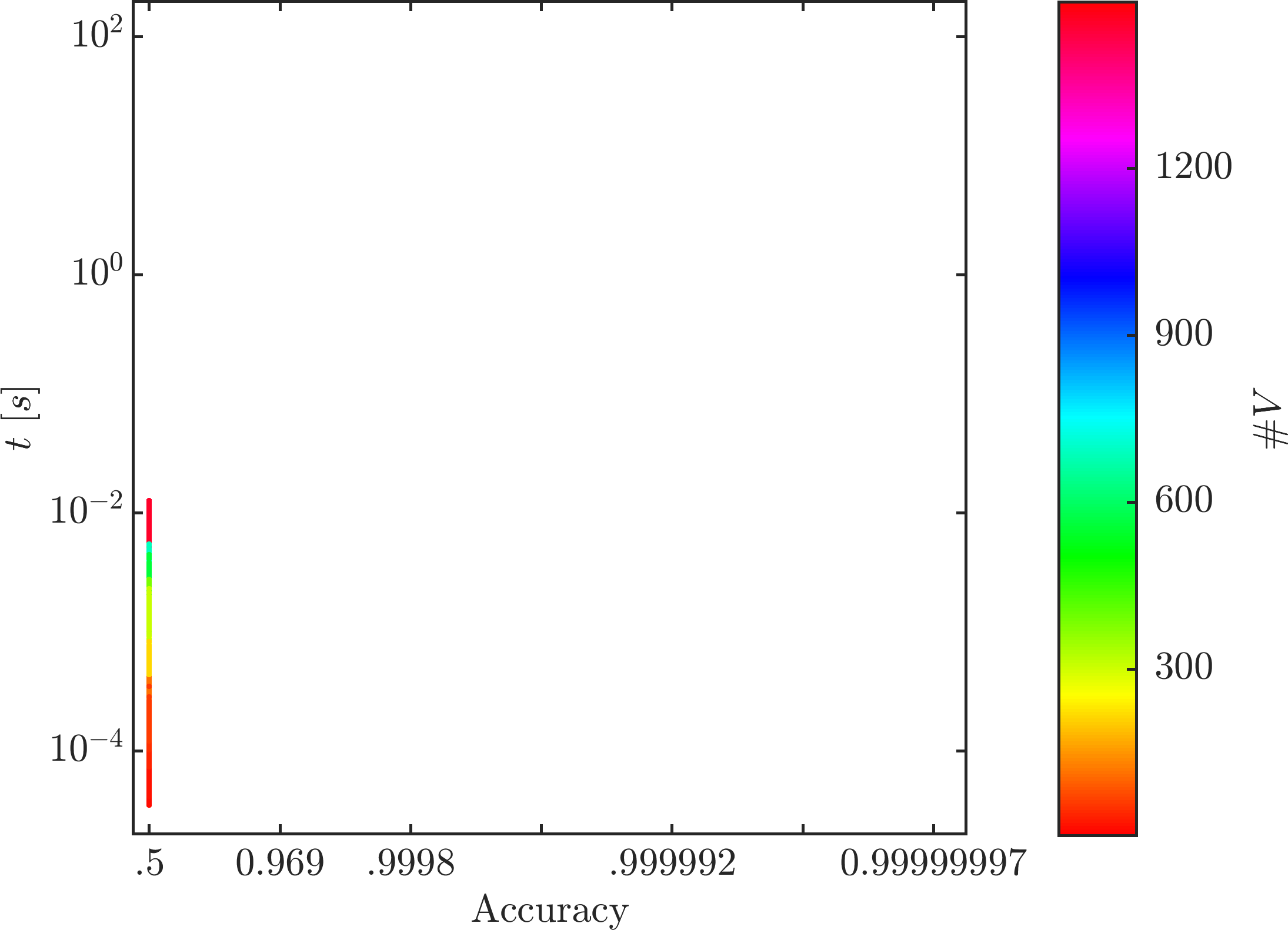}\\[2ex]
    Corner cutting method\\%
	\includegraphics[width=0.49\linewidth]{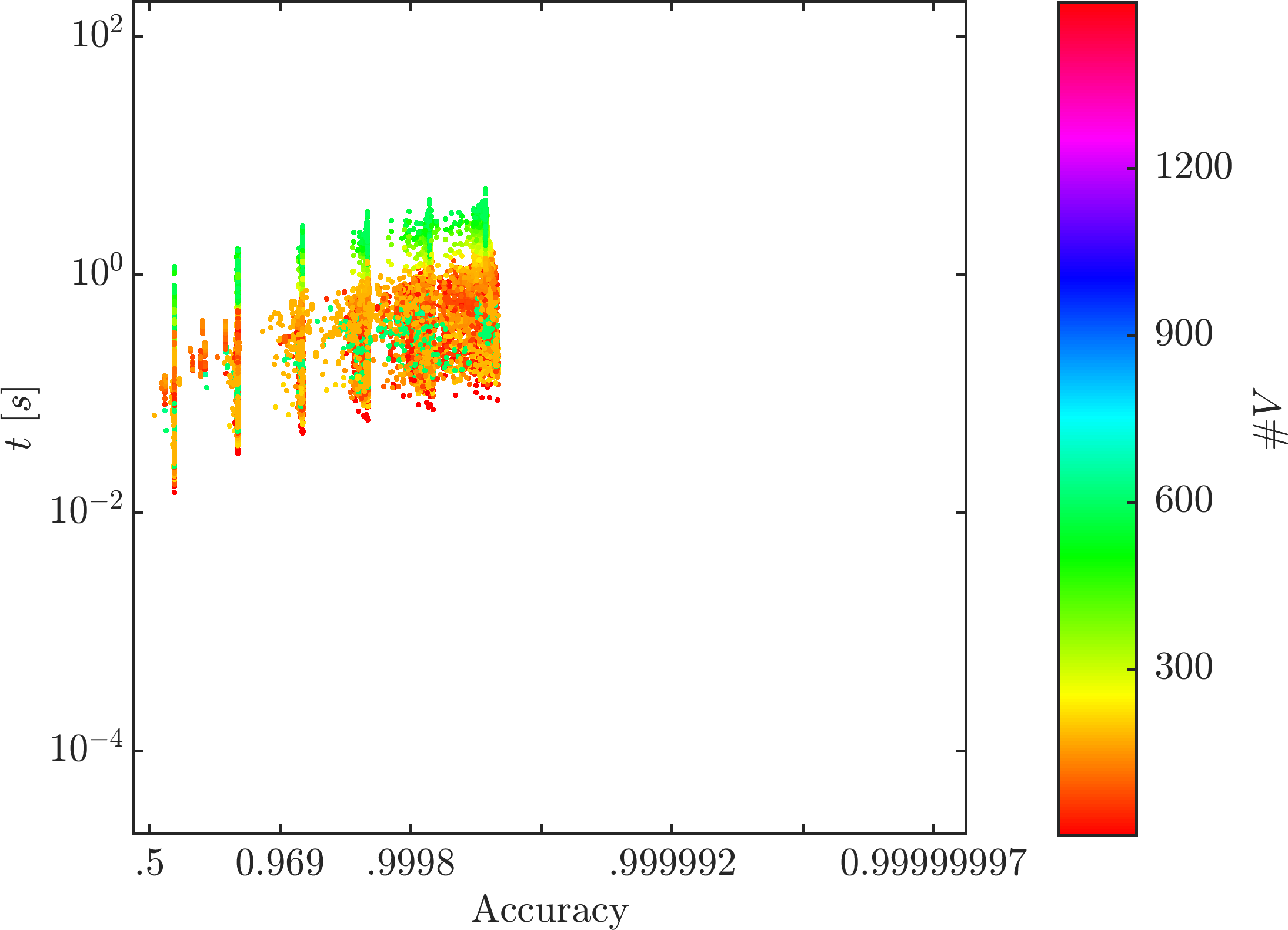}
   	\includegraphics[width=0.49\linewidth]{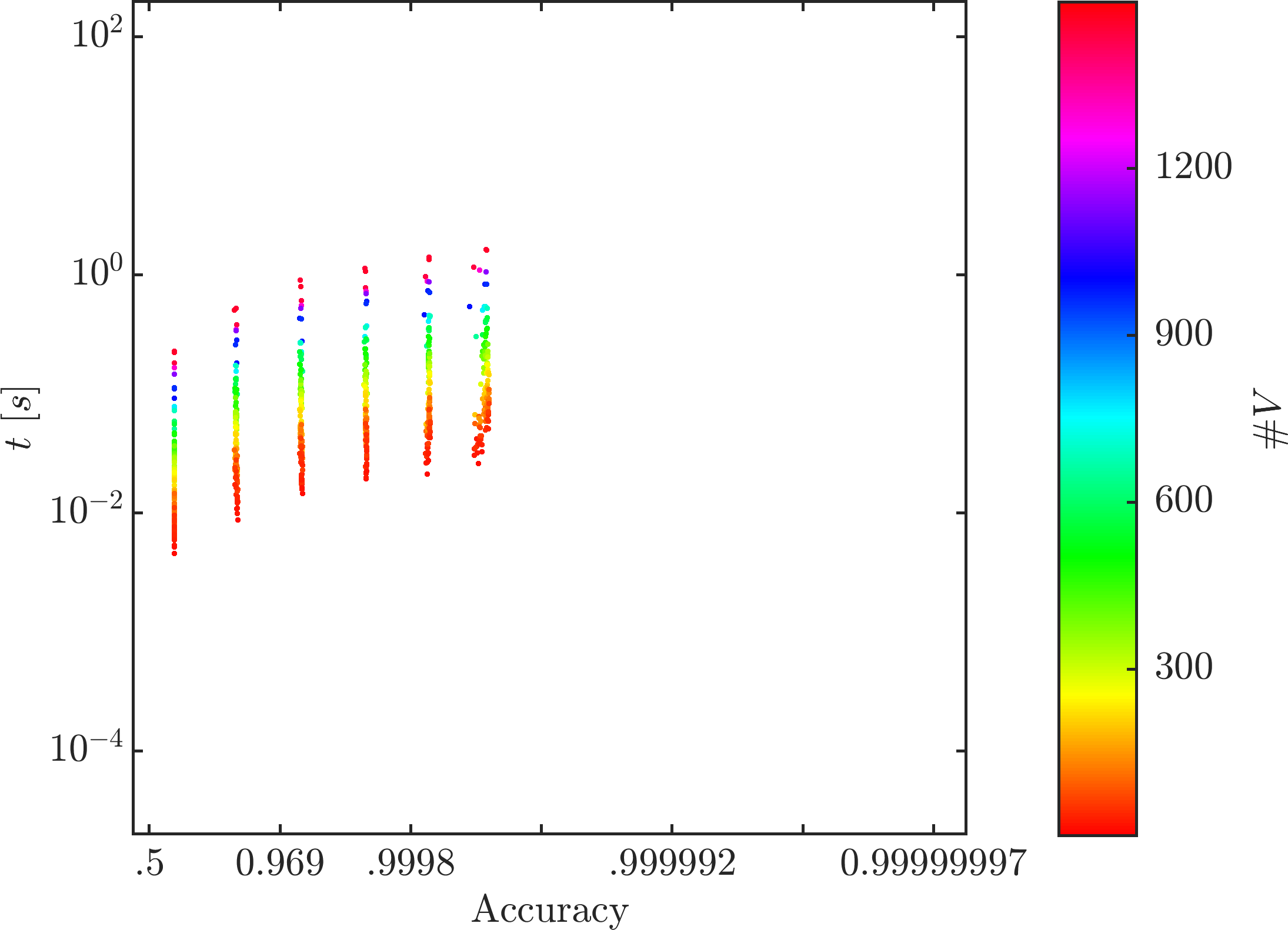}\\[2ex]
    Projection method\\
	\includegraphics[width=0.49\linewidth]{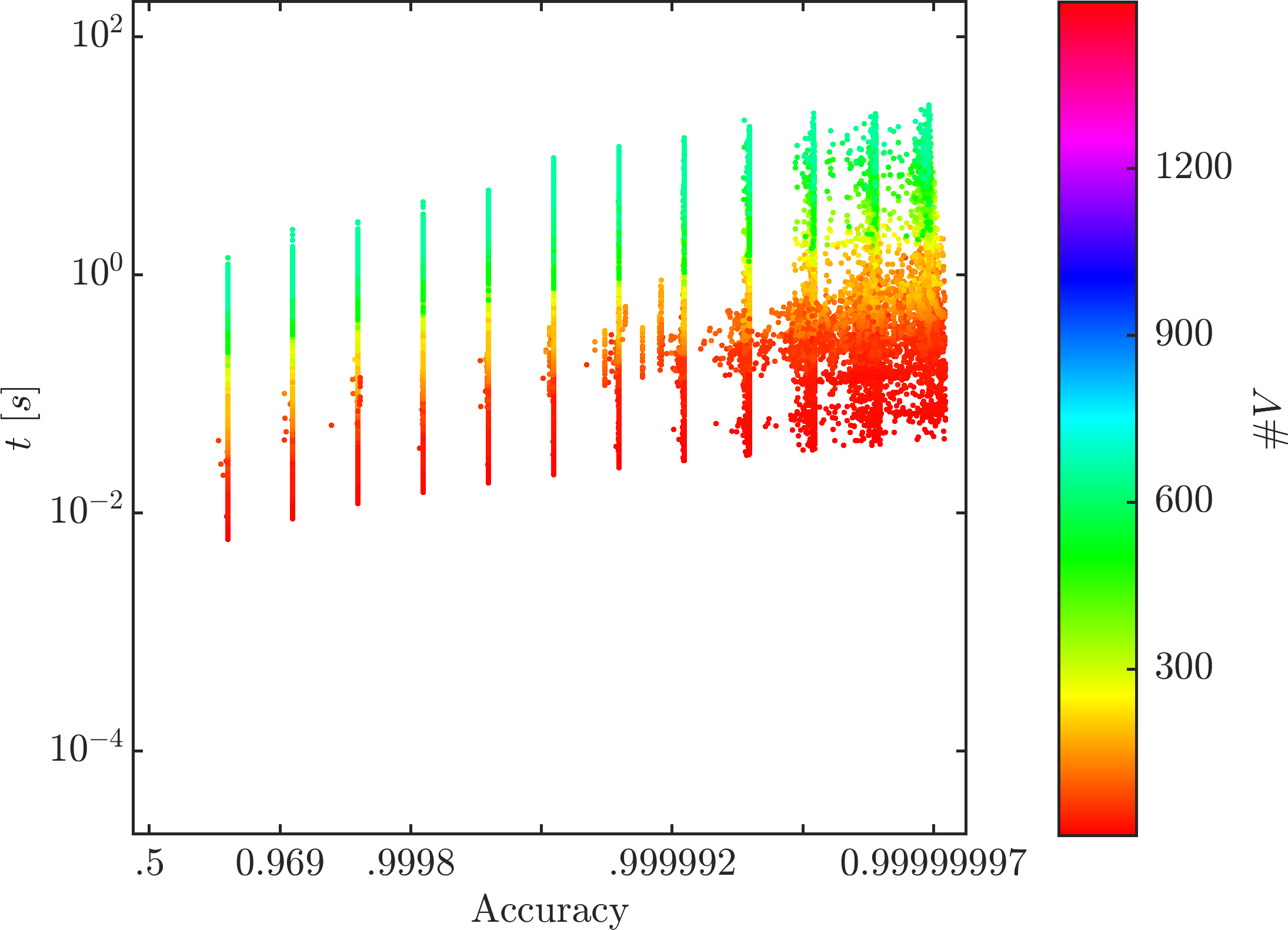}
   	\includegraphics[width=0.49\linewidth]{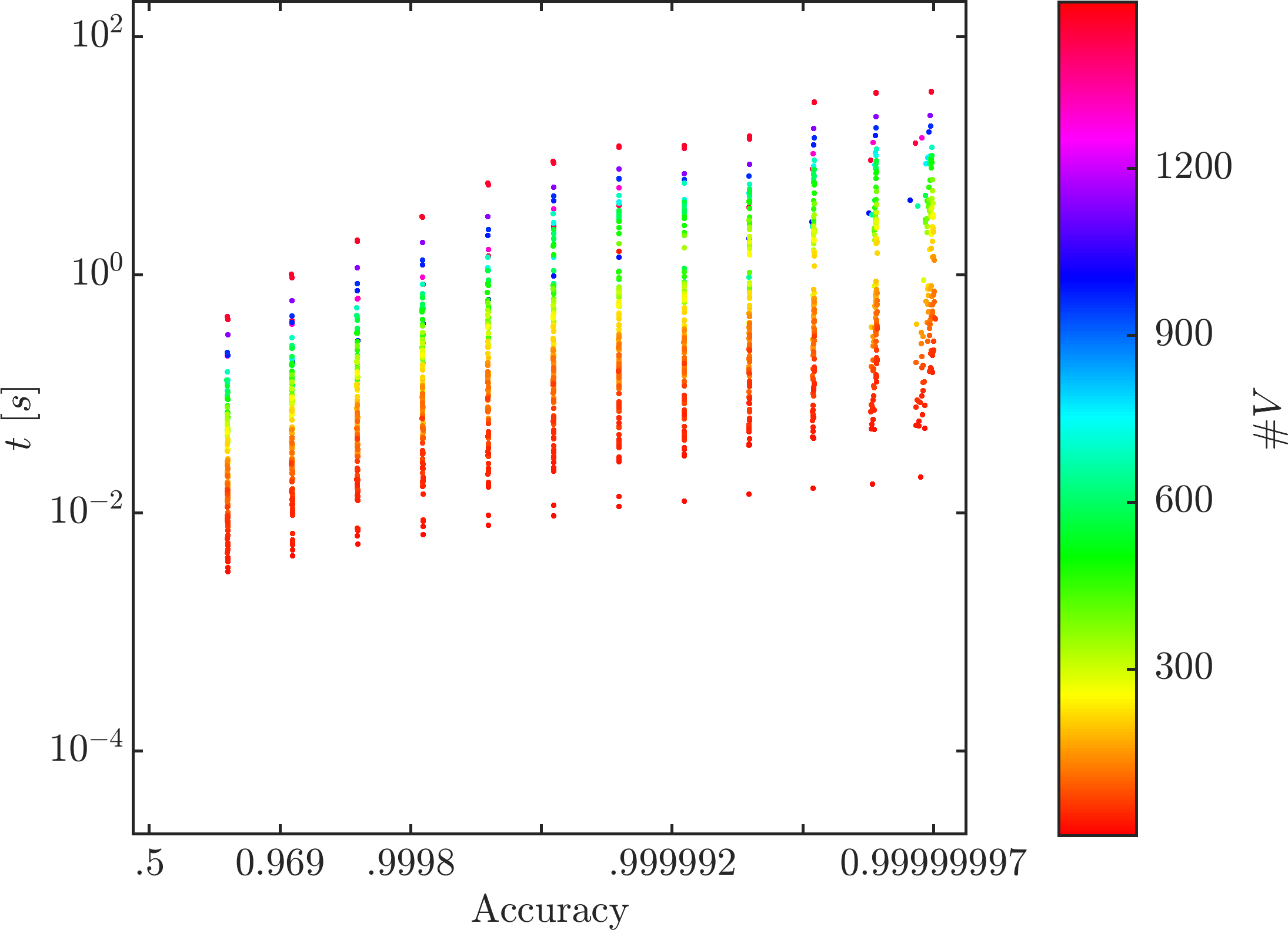}\\[2ex]~
    
    {
    \small 
     Figure~\ref{fig_comparison_caption}: Runtime $t$ in seconds of the the methods 
     \emph{complex polytope}, \emph{corner cutting} and \emph{projection}. 
     See the full caption at page~\pageref{fig_comparison_caption}.
     }

	\label{fig_comparison}
%    \vfill
\end{figure}

\begin{figure*}

\small
    On the $x$-axis the theoretical minimal accuracy on a logarithmic scale is printed, 
    on the $y$-axis the time the algorithm needed, also on a logarithmic scale.
    The true, obtained, accuracy is especially for the complex polytope method much higher.
    The colour indicates the number of vertices of the elliptic polytope. 
    The dimension of the problem is not plotted,
     since it turned out to have only a very minor influence on the runtime.
     
    All algorithms were assessed using the same data set,
    the corner cutting method and the projection method were tested
     with different approximation factors.
     
    The left column is for data arising in the Invariant polytope algorithm.
    The right column is for data of ellipses and elliptic polytopes 
    with normal distributed real and imaginary part.

    One can see clearly, that the complex polytope method is the most efficient algorithm 
    when one compares the time the algorithm needs with its accuracy. 
    This is even more true under the viewpoint that the complex polytope method
    on average yields an accuracy of $0.7071$.
    
    Comparing the corner cutting method and the projection method, 
    one sees that the latter clearly outperforms the former consistently.
    
    Note: The blurring of the last accuracy values in each plot is due to numerical errors.

    \bigskip
\makeatletter
\small
\leftskip\z@\@plus 100fil%
\rightskip\z@\@plus -100fil%
\parfillskip\z@\@plus 200fil\relax    
    Caption for Figure~\ref{fig_comparison_caption} on page~\pageref{fig_comparison}:
    Runtime $t$ in seconds of the the methods 
    \emph{complex polytope}, \emph{corner cutting} and \emph{projection}.
    \par
\makeatother

\label{fig_comparison_caption}
\end{figure*}

In this section we demonstrate practical implementations 
of our methods of finding the convex hulls of ellipses.  
We use the following  solvers:  \emph{Matlabs} \texttt{linprog} and 
\emph{Gurobi}%
\footnote{\emph{Gurobi} is a commercial solver, but a free academic licence can be obtained at~%
\href{https://www.gurobi.com/academia/academic-program-and-licenses/}{\emph{gurobi.com}}.}
for the linear programming (\emph{LP}) problems and 
\emph{SeDuMi}%
\footnote{\emph{SeDuMi} is free and can be downloaded at~\href{https://github.com/sqlp/sedumi}{\emph{github.com/sqlp/SeDuMi}}. 
The GitHub version is a maintained fork of the original project,
whereas the original host does not seem to maintain SeDuMi any more.}
and \emph{Gurobi} for the 
quadratic programming (\emph{QP}) problems.  

We obtain numerical results and compare them for the following methods presented in this paper:  
\begin{itemize}
\item Complex polytope method (Section~\ref{sec_methodB})
\item Corner cutting method (Section~\ref{sec_methodD})
\item Projection method (Section~\ref{sec_methodE})
\item Mixed method
\end{itemize}
The \emph{Mixed method} is a combination of the complex polytope method 
and the projection method. 
The former is the fastest algorithms of all three, 
the latter is the most accurate. 
The mixed method accepts an additional parameter \emph{bound} 
describing the range of values one is interested in. 
Whenever the complex polytope method determined that the norm is inside or outside of the range of interest,
the exact algorithm is not started, and thus the computation is sped up.
For example, 
for the application of computing the joint spectral radius using the Invariant polytope algorithm,
one is only interested whether an ellipse lies inside or outside of the convex hull of the elliptic polytope. 
Now, whenever the complex polytope method concludes that an ellipse lies inside or outside, 
one can already  abort the computation.

The algorithms are implemented in Matlab and included in the \emph{ttoolbox}~\cite{ttoolbox}.
The scripts to generate and evaluate the data can be downloaded from~%
\href{https://tommsch.com/science.php}{\emph{tommsch.com/science.php}}
All software is thoroughly tested using the \emph{TTEST} framework~\cite{TTEST}.
The various implemented methods are optimized to a different degree,
and thus, timings cannot be compared well.

To obtain quantitative measures of how the methods differ, 
we generated two test sets of random ellipses and elliptic polytopes.

\emph{(Dataset~$A$)} The first set contains ellipses and elliptic polytopes
whose ellipses have normal distributed real and imaginary part.
Dataset~$(A)$ consists of 365 elliptic polytopes in dimension 3 to 25
and the norm is computed approximately for 12 ellipses per elliptic polytope.

\emph{(Dataset~$B$)} The second set is generated by the Invariant polytope algorithm, 
where we stored the intermediate occurring ellipses and elliptic polytopes
for some random sets of input matrices with complex leading eigenvalue. 
Dataset~$(B)$ consists of 119 elliptic polytopes in dimensions 2 to 12
and the norm is computed of 100 ellipses per elliptic polytopes.

For the tests we used a PC with an 
AMD Ryzen 3600, 6 cores%
\footnote{For the tests only 5 cores were used.},
3.6\,GHz, 64\,GB RAM, Windows 10 build 1809%
\footnote{Windows 10 build 1809 has problems with the used Ryzen 3600 processor. 
Newer versions of Windows run usually 10\% faster on this processor.}, 
Matlab R2020a, 
Gurobi solver 9.0.2 from May 2019,
SeDuMi solver 1.32 from July 2013,
ttoolboxes v1.2 from June 2021,
TTEST v0.9 from June 2021.

The measured runtime of the algorithms with respect to the chosen accuracy and number of vertices
can be seen in Figure~\ref{fig_comparison}.

\subsection{Behaviour of the complex polytope method}

\begin{figure}[!ht]
	\centering
	\includegraphics{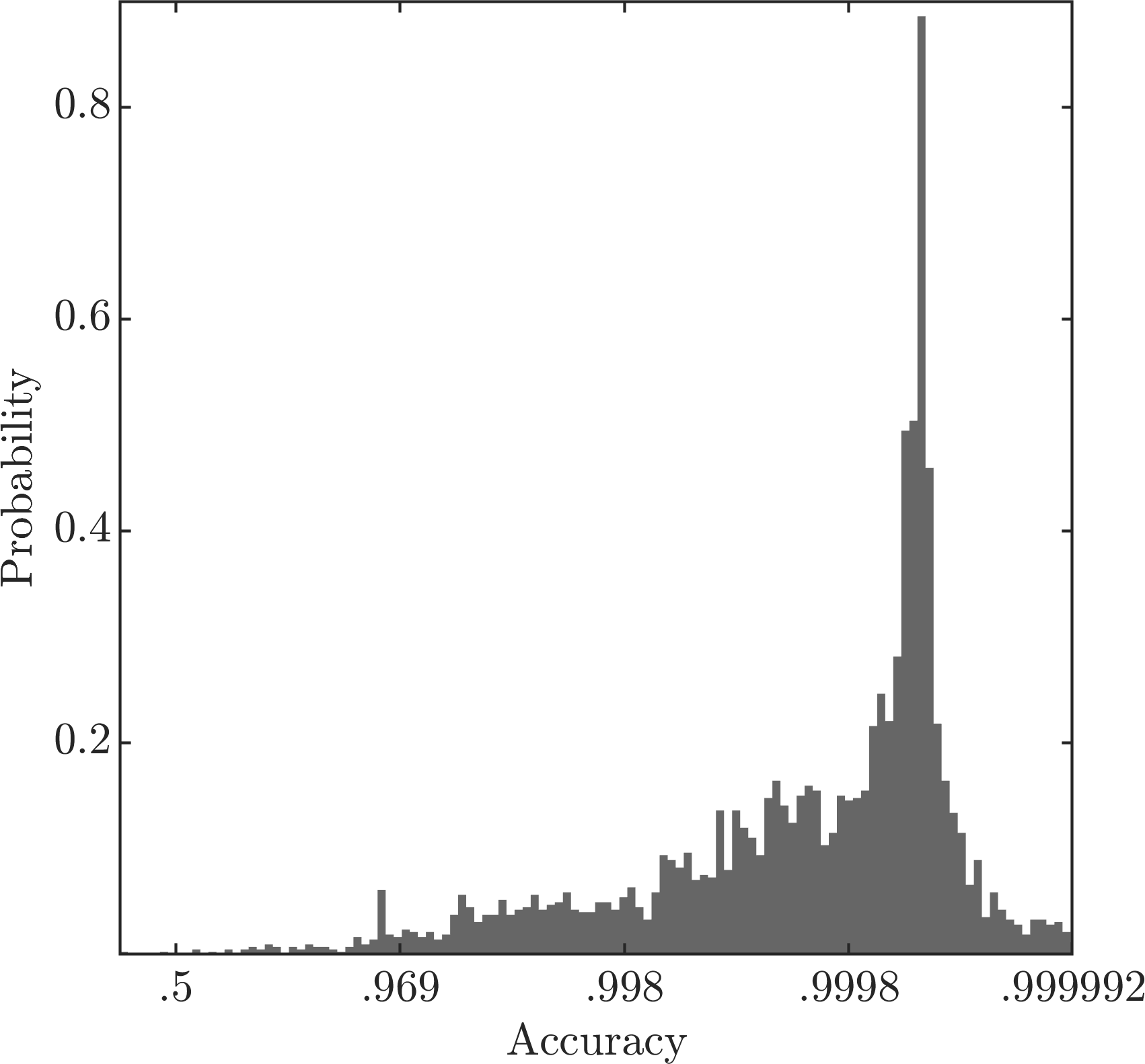}
   	\includegraphics{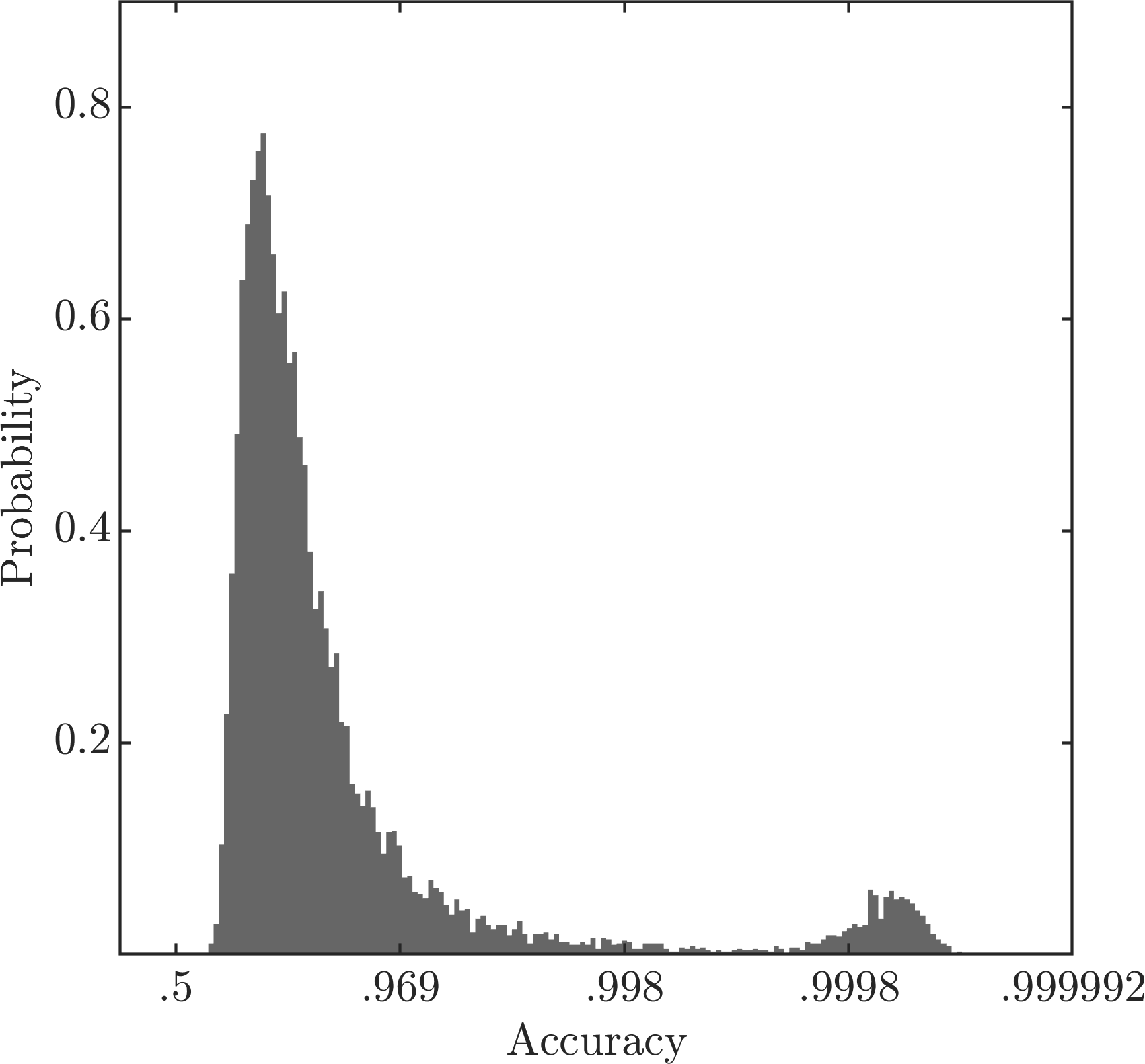}
	\caption{\small 
    Estimated probability density function of the approximation factor 
    of the complex polytope method for two different data sets.
    The left pictures data set is Dataset $(A)$,
    the right pictures data set is Dataset $(B)$.
    }
	\label{fig_accuracy_cb}
\end{figure}

Although the theoretical approximation factor of this method is $\nicefrac{1}{2}$,
in numerical experiments it turns out that the average approximation factor 
is mostly larger than $\nicefrac{1}{\sqrt{2}}$.
In small dimensions, $d=2,3$, the approximation factor is even close to $1$ in a lot of cases, 
see Figure~\ref{fig_accuracy_cb} for the (estimated) probability density function 
of this methods approximation factors.

Note that the numerical accuracy of the QP solver is approximately $10^{-5}$ and thus, 
the maximal accuracy which can be reached is approximately $0.99999$, 
which is quite exactly the position of the rightmost peaks
in Figure~\ref{fig_accuracy_cb}

\subsection{Behaviour of the corner cutting method}

The corner cutting method is, like the complex polytope method, a QP problem, and thus,
the absolute error of the solution returned by our numerical solvers is in the range of $10^{-5}$. 
For the corner cutting method, this accuracy is on average obtained after 10 to 12 iterations 
in the generic case, as our experiments show.
Apart from the chosen accuracy, the runtime of the algorithm mostly depends on, 
firstly, the geometry of the problem and, 
secondly, on the number of vertices of the elliptic polytope. 
The dimension of the problem only has a minor influence on the runtime.

\subsection{Behaviour of the Projection Method  (Method E)}

The absolute error of the LP solvers is roughly $10^{-9}$, 
which is magnitudes higher than for the QP solver.
Solely due to this fact, the projection  method 
is the most accurate method of all the described methods.

For the projection  method one could increase the number of vertices of the polytopes 
approximating the ellipses of the elliptic polytope until the norm is computed up to the desired accuracy, 
similar as in the corner cutting method.
Unfortunately, this hinders the use of warm-starting the LP problem since this alters the underlying LP.
Therefore, in our implementation we choose the approximation factor 
$q_1$ corresponding to Problem EE* to be of the same magnitude than the approximation factor 
$q_2$ corresponding to Problem EE, 
and such that $q_1q_2\simeq q$, where $q$ is the chosen accuracy.

\section{Applications}  % Section 10
\label{sec_applications}

\subsection{Number of extremal vertices}

Before we demonstrate the main applications,
the construction of 
Lyapunov functions of linear systems and evaluation of extremal norms, 
we address the question of the expected number of vertices in the convex hull of random ellipses. 
This issue is important  for both of the above applications since 
it shows the growth of the number of ellipses with respect to the number of the iterations of the algorithms. 
  
The corresponding  problem for the convex hull of random points 
originated with the famous question of Sylvester~\cite{Sylv1864}.
The answer highly depends on the domain on which the points are sampled and on the dimension. 
Various lower and upper bounds on the asymptotically 
expected number of points in the convex hull are known, see~\cite{Ha99}\cite{DT09} and references therein. 
It would be extremely interesting to come up with similar theoretical
estimates for convex hulls of ellipses. Here we compare the two cases solely numerically.
There is no canonical analogue between points sampled from some domain and ellipses 
sampled from some domain, since the ellipses are determined by two parameters instead of one.
We introduce several numerical results with various samplings. 

\subsubsection*{Uniform sampled ellipsoids in the unit ball}

\begin{figure}[!ht]
	\centering
	\includegraphics{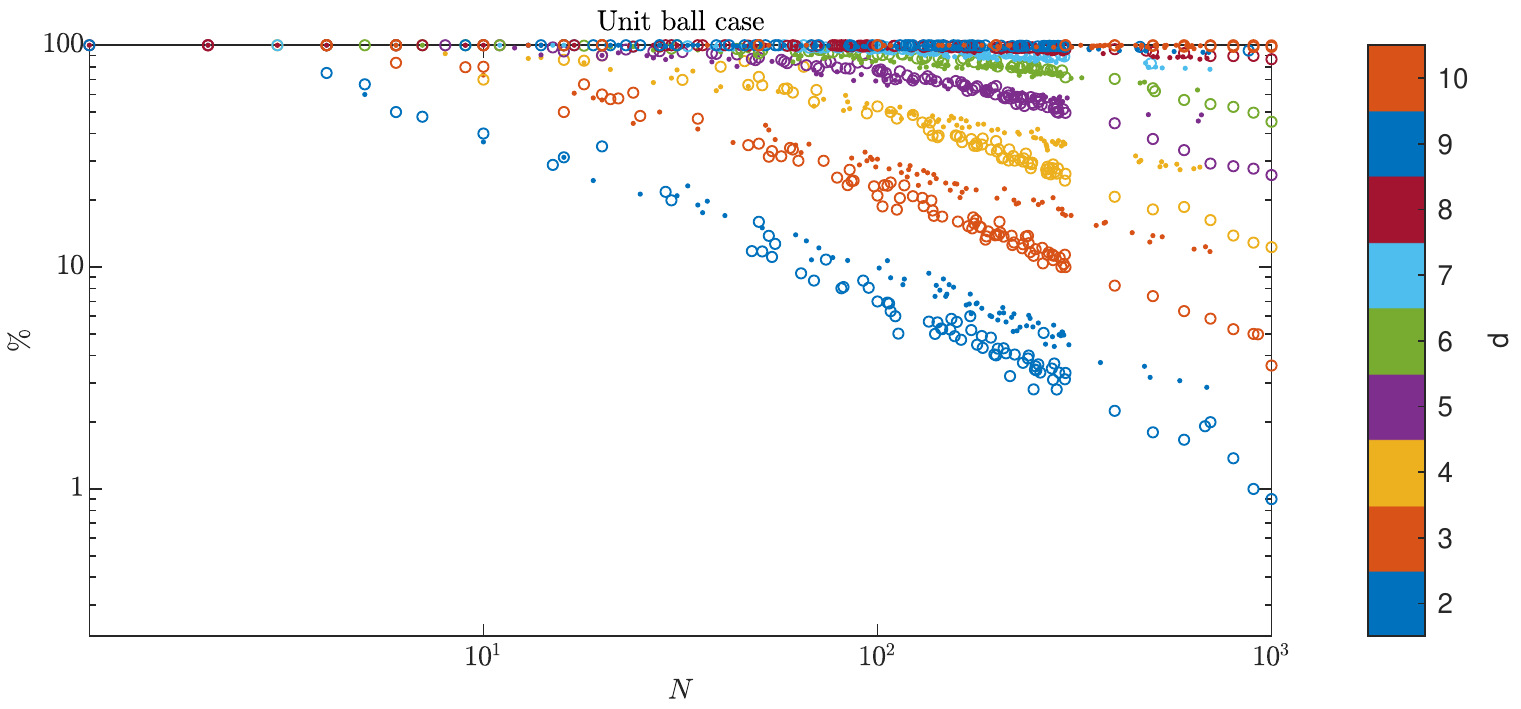}
	\caption{\small 
    Fraction of points or ellipses which belong to the convex hull 
    of randomly selected points or ellipses which are 
    uniformly distributed in the unit ball or have uniformly distributed 
    real and imaginary part in the unit ball, respectively.}
	\label{fig_extremal_ball}
\end{figure}

Given ellipses whose real and imaginary part are sampled uniformly from the unit ball,
and given points uniformly sampled from the unit ball. The number of vertices and ellipses 
of their corresponding convex hull is plotted in Figure~\ref{fig_extremal_ball}.
Experiments are made for dimensions 2 to 10 and number of points and ellipses 1 to 1000. 
Since the computational time increases significantly with the number of points or ellipses,
for sets with more than 300 points or ellipses less examples were conducted.
In the plot one can see the relative fraction of points or ellipses belonging to the convex hull, 
coloured with respect to the dimension.
The point examples are plotted with a $\cdot$ symbol,
the ellipse examples are plotted with a $\circ$ symbol.

Interestingly, the two cases differ greatly. 
Whereas for dimensions 2 to 5 the fraction of ellipses belonging to the convex hull 
is less than for the point counterpart,
the situation is reversed from dimension 7 upwards.

\subsubsection*{Uniform sampled ellipsoids in the unit cube}

Interestingly, when the points or the real and imaginary parts of the ellipses 
are sampled uniformly from a unit-cube,
the behaviour between the point case and the ellipses is very similar,
 at least for small dimensions, as can be seen in Figure~\ref{fig_extremal_cube}.
 
 \begin{figure}[!ht]
 	\centering
 	\includegraphics{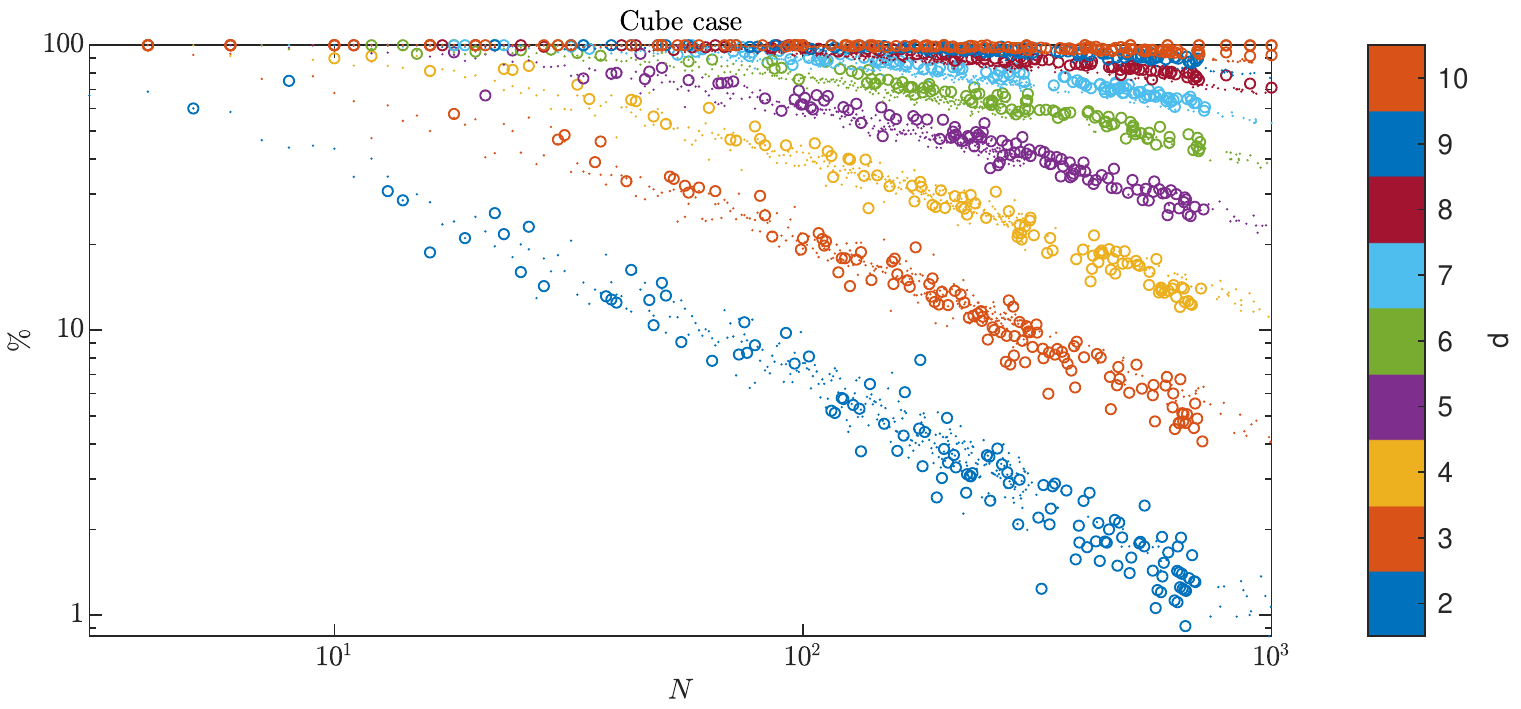}
 	\caption{\small 
     Fraction of points or ellipses which belong to the convex hull
     of randomly selected points or ellipses which are uniformly distributed 
     in the unit cube or have uniformly distributed real and imaginary part 
     in the unit cube, respectively.}
 	\label{fig_extremal_cube}
 \end{figure}

\subsubsection*{Gaussian sampled ellipsoids}

Also for points and ellipses with real and imaginary part 
sampled from a $d$-dimensional normal distribution, the behaviour is similar.
See Figure~\ref{fig_extremal_gaussian} for a visualization of the obtained numerical results.

\begin{figure}[!ht]
	\centering
	\includegraphics{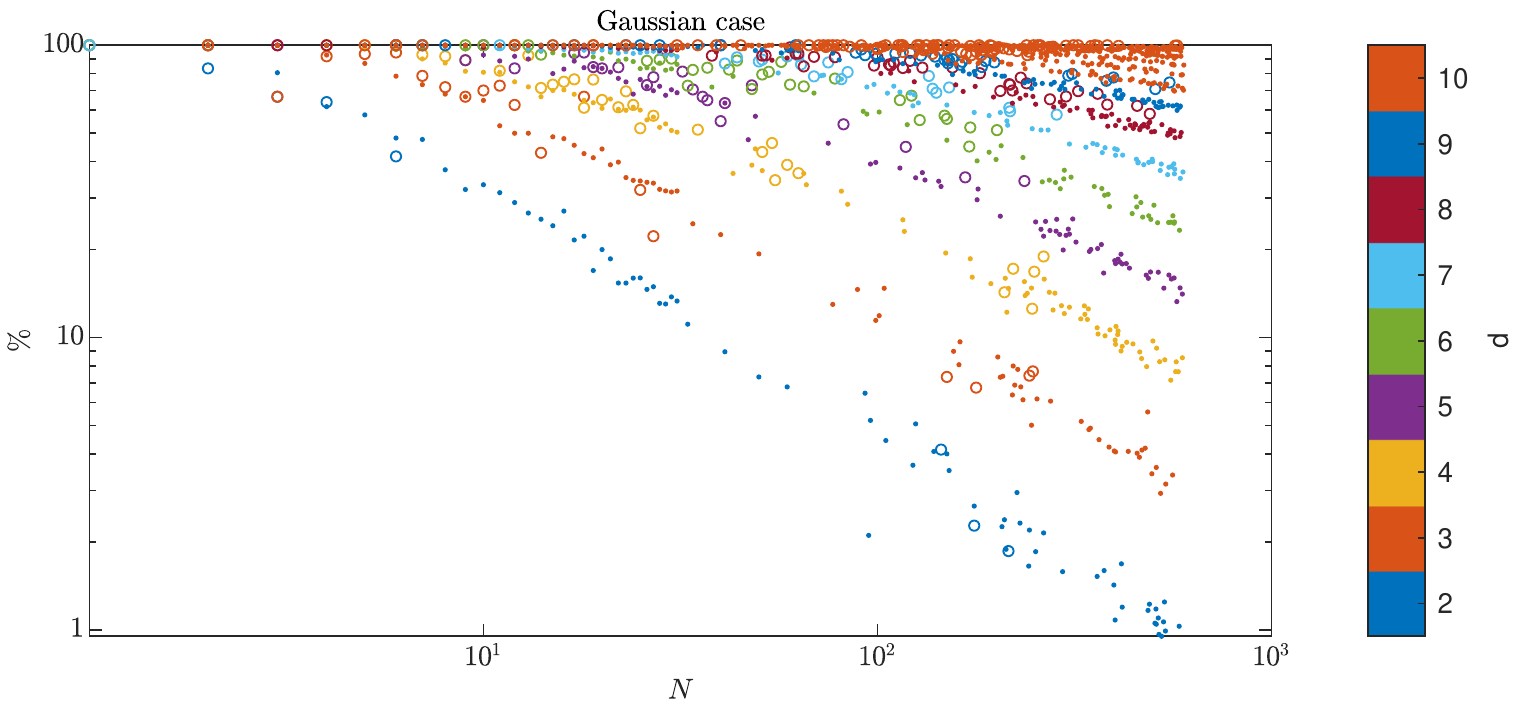}
		\caption{\small  
        Fraction of points or ellipses which belong to the convex hull 
        of randomly selected points or ellipses which are 
        normal distributed or have normal distributed real and imaginary part, respectively.}
	\label{fig_extremal_gaussian}
\end{figure}

\subsection{Lyapunov function for a discrete time linear system}

\begin{figure}[!ht]
	\centering
	\includegraphics{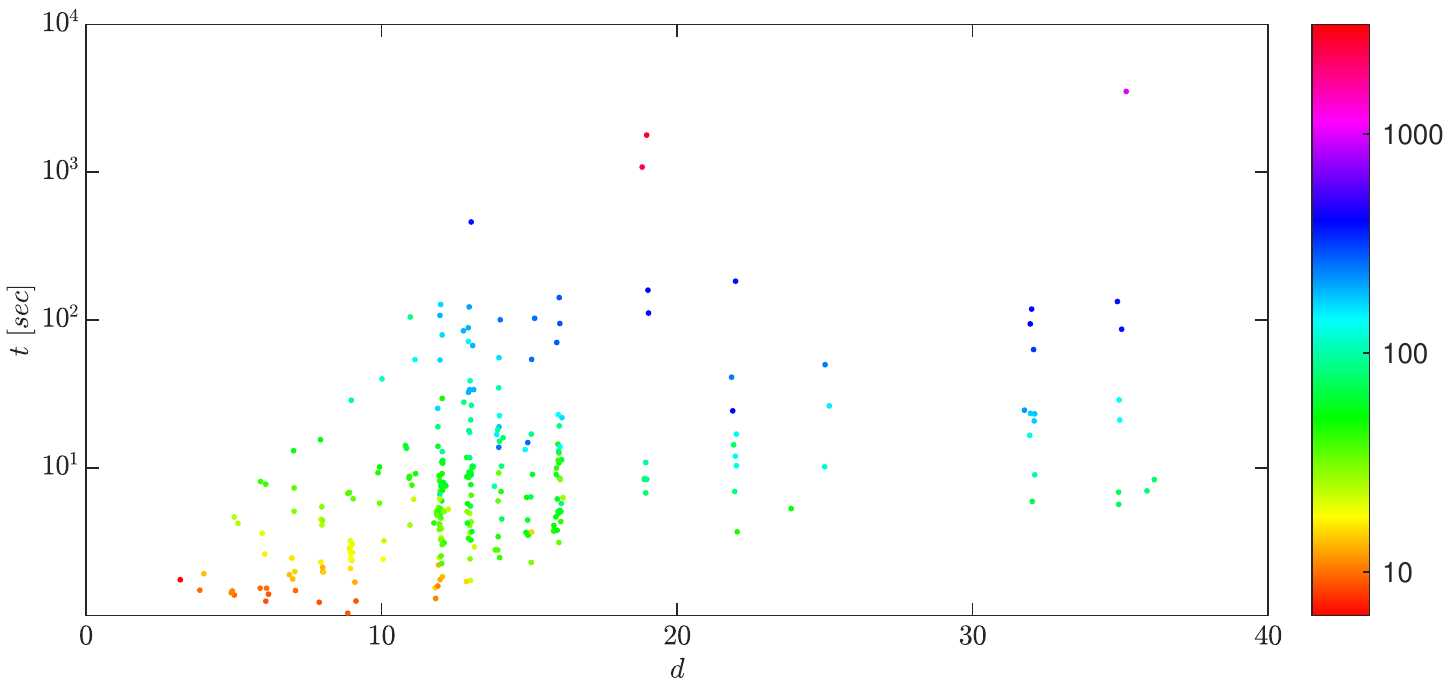}
	\caption{\small 
    The computation time for evaluating an invariant elliptic polytope $P$ 
    for a given matrix $A$ with complex leading eigenvalue such that 
    $AP\subset\rho(A)P$ holds. 
    The colour indicates the number of vertices of the polytope.}
	\label{fig_lyapunov}
\end{figure}	

Given a linear system defined by a $d\times  d$ matrix $A$ 
with a complex leading eigenvalue, which is supposed to be unique and simple.
We need to construct a norm~$\|\cdot\|$ in~$\re^d$ such that 
$\|A\bx\| \, \le \, \rho(A)\|\bx\|$ for all~$\bx \in \re^d$. 
This is the same as constructing a symmetric convex body~$P \subset \re^d$
for which  $AP\subset\rho(A)P$. It is obtained as an elliptic polytope by an 
iteration method,  
 see Section~\ref{sec_introduction}, Application~2. 
In Figure~\ref{fig_lyapunov} the time needed to compute the invariant elliptic polytope $P$
is plotted against the dimension. 
The colour indicates the number of vertices of the set~$V$.

\subsection{Invariant polytope algorithm}

\begin{figure}[!ht]
	\centering
	\includegraphics{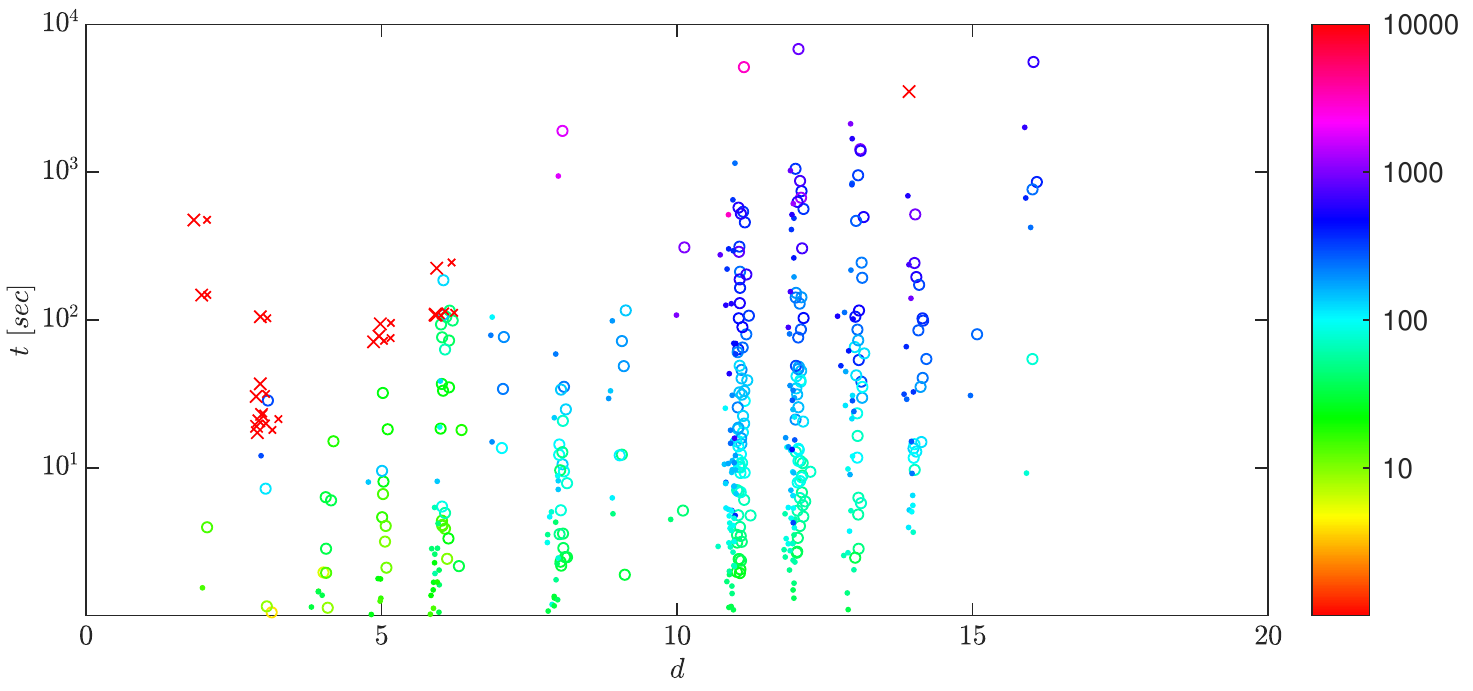}
	\caption{\small 
    The time of computation of the joint spectral radius 
    for a pair of matrices $A_1,A_2\in\re^d$,
    whose spectrum maximizing product has complex leading eigenvalue. 
    The colour indicates the number of vertices of the polytope.
    }
	\label{fig_ipa}
\end{figure}

Now we analyse the performance of the Invariant polytope algorithm 
for computation of the joint spectral radius of a set of matrices.
The elliptic polytopes are applied in the case when the spectrum maximizing product has a 
complex leading eigenvalue. In the construction of the 
invariant elliptic polytopes we can use each of our methods. 
The  numerical tests show that the projection method 
always performs better than the corner cutting method and
that the mixed method always performs better than the projection method.
Thus, only two significant algorithms remain, 
the complex polytope method and the projection method. We are comparing them.

In Figure~\ref{fig_ipa} the results of our experiments are plotted.
On the x-axis we have  the dimension of the generated example, 
the y-axis shows the time needed to compute an invariant polytope.
The x-values are slightly distorted for better readability.
Similar to the case of random matrices with real leading eigenvalue, 
it seems that the existence of a spectrum maximizing product with finite length is generic.
The results obtained using the complex-polytope method are marked with a $\cdot$ symbol,
the results obtained using the projection method are marked with a $\circ$ symbol.
Examples where the algorithm could not find an invariant polytope
are marked in both cases with a red $\color{red}\times$ symbol.
We suspect the reason why the Invariant polytope algorithm 
may not terminate within reasonable time for certain examples is
a long spectrum maximizing product for the set under test.

\begin{rmk}\label{r.110}
In practice it occurs rather seldom that 
a spectrum maximizing product 
of a set of matrices possesses a complex leading eigenvalue 
and whose length is greater than one. 
One such example is given in the Appendix, Example~\ref{ex.110}.
\end{rmk}

\newpage

\appendix

\section{Appendix}

%\emph{}
\subsection*{Proof of Theorem~\ref{th.8}.}
We begin with the following technical fact. 
Let us have a vector $\ba\in \re^2$ and a line~$\ell$ on~$\re^2$
which is not parallel to~$\ba$. An {\em affine symmetry} 
about~$\ell$ 
along $\ba$ is an affine transform 
that for each $\bx \in \ell$ and $t \in \re$, 
maps the point $\bx + t\ba$ to $\bx - t\ba$.  
 If $\ba \perp \ell$, then this is the usual  (orthogonal) symmetry. 
 \begin{lma}\label{l.60}
 Let $O$ be an arbitrary point on the side of a convex polygon
 different from its midpoint. 
 Then there exists an affine symmetry about this side arbitrarily close 
   to an orthogonal symmetry such that the distances from $O$
   to the images of the vertices of this polygon are all different.  
 \end{lma}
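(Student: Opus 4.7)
The plan is to set up coordinates so that the ambient algebra becomes transparent, reduce the statement to a genericity (non-identically-zero-polynomial) statement in one real parameter, and check case-by-case that the genericity really holds under the stated hypotheses.

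First, I would place the side of the polygon on the $x$-axis $\ell$, with the polygon lying in the upper half-plane $\{y \ge 0\}$ and $O = (o, 0)$. Let the vertices be $V_i = (x_i, y_i)$, $i = 1, \ldots, n$. Given a direction $\ba = (a_1, a_2)$ with $a_2 \ne 0$, the affine symmetry about $\ell$ along $\ba$ has the explicit form $(x,y) \mapsto (x - 2\lambda y,\, -y)$, where $\lambda := a_1/a_2$. Here $\lambda = 0$ gives the orthogonal symmetry, and letting $\lambda$ vary gives all admissible affine symmetries. Thus the one-parameter family $\{\lambda \in \re\}$ captures everything we need.

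Next, I would compute, for each vertex $V_i$, the square distance
\[
\varphi_i(\lambda) := |O - V_i^{*}|^2 = (o - x_i + 2\lambda y_i)^2 + y_i^2,
\]
which is a polynomial of degree at most two in $\lambda$. The lemma will follow if I can show that, for each pair $i \ne j$, the polynomial $\varphi_i - \varphi_j$ is not identically zero, because then its (finitely many) real roots form a closed set of $\lambda$ to avoid, and so arbitrarily close to $\lambda = 0$ there are admissible parameters producing pairwise distinct distances.

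The main (only) step with real content is proving this non-vanishing. Writing $u_i := o - x_i$ and expanding, the identical vanishing of $\varphi_i - \varphi_j$ is equivalent to the three equalities $y_i^2 = y_j^2$,\; $u_i y_i = u_j y_j$,\; $u_i^2 + y_i^2 = u_j^2 + y_j^2$. Since $y_i, y_j \ge 0$, the first forces either $y_i = y_j$ or $y_i = y_j = 0$. In the case $y_i = y_j > 0$ the second equality gives $u_i = u_j$, hence $V_i = V_j$, contradicting that they are distinct vertices. In the case $y_i = y_j = 0$, both vertices lie on $\ell$; by convexity, the only vertices of the polygon lying on $\ell$ are the two endpoints $A, B$ of the chosen side, so $\{V_i, V_j\} = \{A, B\}$, and the remaining condition $u_i^2 = u_j^2$ forces $o = \tfrac{1}{2}(x_i + x_j)$, i.e.\ $O$ is the midpoint of $[A, B]$. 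This is exactly the case excluded by hypothesis.

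The potential pitfall I would watch for is the second case above, where the condition that $O$ be different from the midpoint is decisive — any argument missing that case would be incomplete. Once the non-vanishing is established, the conclusion is immediate: the union over all $\binom{n}{2}$ pairs of the zero-sets of $\varphi_i - \varphi_j$ is finite, so for $\lambda$ chosen arbitrarily small and outside this finite set, the corresponding affine symmetry is as close as we like to the orthogonal one and produces pairwise distinct distances from $O$ to the images of the vertices.
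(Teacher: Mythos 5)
Your proof is correct and follows essentially the same route as the paper: parametrize the affine symmetries about the side by a single real number, observe that equidistance of two image vertices from $O$ is a quadratic condition in that parameter, and conclude that only finitely many parameter values are bad. In fact your write-up is more careful than the paper's, which asserts that the quadratic ``has at most two solutions'' without ruling out that it vanishes identically --- precisely the degenerate case where your analysis shows the hypotheses (distinct vertices, and $O$ not the midpoint of the side) are needed.
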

\begin{proof}
 If we choose the origin at~$O$ and one of the basis vectors 
 along that side, then the matrix of an arbitrary affine symmetry is 
 $$
S \ = \ 
\left(
\begin{array}{rr}
1 & a\\
0 & -1
\end{array}
\right)\, ,                                                           
$$
where $a$ is an arbitrary number. If the images $A\bx$ and $A\by$
of two vertices~$\bx \ne \by$  are equidistant from~$O$, then the vectors~$A(\bx + \by)$
and $A(\bx - \by)$ are orthogonal and hence $(\bx - \by)A^TA (\bx + \by) = 0$. 
This is a quadratic equation in~$a$, which has at most two solutions. 
Hence, there  exists only a finite number of values of~$a$ for which 
some of images of vertices are equidistant from~$O$.  
\end{proof}

  \begin{prop}\label{p.30}
For every~$n\ge 2$ and $\varepsilon > 0$, there exists a polyhedron~$Q_n$ in~$\re^{n+2}$
with at most~$2n+3$ facets 
whose orthogonal projection to some two-dimensional plane 
is a $2^n$-gon  such that: 
1) Its distance  (in the Hausdorff metric) to a regular $2^n$-gon
centred at the origin is less than~$\varepsilon$.
2) The distances from its~$2^n$ 
vertices to the origin are all different.  
 \end{prop}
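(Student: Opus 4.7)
The plan is to adapt the iterative doubling construction of Section~\ref{sec_methodE} that realizes the regular $2^n$-gon as the orthogonal projection, onto a two-dimensional coordinate plane, of a polyhedron in $\re^{n+2}$ with $2n+3$ facets (after eliminating the $n$ linear equations of the system~\eqref{eq.rTn}). The modification is to replace each of the $n$ orthogonal reflections in the doubling procedure with an affine symmetry along a direction slightly off the orthogonal one.

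Concretely, start with the same initial isosceles triangle $AOB$, with $O$ the origin. At the $k$-th doubling step, instead of reflecting the current polygon across the line $\ell_{\alpha_{n-k}}$ orthogonally, use an affine symmetry along a direction parametrized by a real number $a_k$, with $a_k = 0$ recovering the orthogonal case. A direct rewriting shows that this only modifies the coefficients of the $k$-th linear equation of~\eqref{eq.rTn} relating $(x_{2k+1}, x_{2k+2})$ to $(x_{2k-1}, x_{2k})$; the accompanying modulus inequality keeps the same form. Hence the modified polyhedron $Q_n$ lies in $\re^{n+2}$, has at most $2n+3$ facets, and projects onto the coordinate plane spanned by $(x_{2n+1}, x_{2n+2})$ to a convex polygon with $2^n$ vertices, provided the parameters are small enough to preserve the combinatorial structure.

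The two required properties are enforced inductively. For property~(1), the final polygon depends continuously on $(a_1, \ldots, a_n)$ and coincides with the regular $2^n$-gon at $(0, \ldots, 0)$; so any sufficiently small choice of parameters guarantees Hausdorff proximity below $\varepsilon$. For property~(2), assume inductively that the distances from $O$ to the vertices of $P_k$ are pairwise distinct. At the next step the new vertices of $P_{k+1}$ are the affine images $S_{k+1}(\bv)$ of the vertices $\bv$ of $P_k$, and their squared distances to $O$ are quadratic polynomials in $a_{k+1}$. By the argument of Lemma~\ref{l.60}, together with the analogous reasoning for coincidences between new and old distances, each of the finitely many possible pairwise coincidences is a polynomial equation in $a_{k+1}$ of bounded degree, with only finitely many solutions. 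Excluding these finitely many forbidden values leaves a dense open set of admissible small $a_{k+1}$.

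The main obstacle is carrying the distinctness through all $n$ inductive steps while staying close to the orthogonal construction. It reduces to the observation that at each step only finitely many parameter values are forbidden, so a generic perturbation of arbitrarily small magnitude works. A compatible vector $(a_1,\ldots,a_n)$ of small parameters therefore simultaneously achieves the Hausdorff approximation and the distance-distinctness conditions, yielding the desired polyhedron $Q_n$.
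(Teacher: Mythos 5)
Your proposal follows essentially the same route as the paper: take the doubling construction~\eqref{eq.rTn} with $r=1$ (an $(n+2)$-dimensional polyhedron with at most $2n+3$ facets projecting to the regular $2^n$-gon), perturb each orthogonal reflection to a nearby affine symmetry, and use the finitely-many-forbidden-parameters argument of Lemma~\ref{l.60} to make all vertex distances to the origin distinct. If anything, you are slightly more explicit than the paper in spelling out the induction and the new-versus-old distance coincidences, which Lemma~\ref{l.60} as stated does not literally cover; this is a welcome clarification rather than a deviation.
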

\begin{proof}
Applying the construction~\eqref{eq.rTn} for~$r= 1$, we obtain 
 a polyhedron that consists of points~$(x_1, \ldots , x_{2n+2})^T \in \re^{2n+2}$
 satisfying the system~\eqref{eq.rTn}. 
 That system contains $n$ linear equations and $2n+3$ linear inequalities. 
Hence, it defines an $(n+2)$-dimensional polyhedron with at most~$2n+3$ facets. 
Its   projection to the plane~$(x_{2n+1}, x_{2n+2})$ is a regular 
 $2^n$-gon. 
 Now, in each  iteration $j = 1, \dots, n$ of the construction~\eqref{eq.rTn}, 
 we replace the symmetry about the line~$\ell_{\alpha_{n-j+1}}$ 
 by a close affine symmetry about the same line. 
 Invoking Lemma~\ref{l.60} we can choose this symmetry so that 
 the resulting polygon  has all its vertices on different distances 
 from the origin. Hence, the polygon obtained after the last iteration also possesses 
 this property. 
\end{proof}

\begin{proof}[Proof of Theorem~\ref{th.8}.]
After applying Proposition~\ref{p.30} 
for $n=N-2$, we obtain a polyhedron~$Q_{N-2} \subset \re^{N}$
whose two-dimensional projection to the plane~$(x_{2N-3}, x_{2N-2})$ 
is a~$2^{N-2}$-gon close to a regular~$2^{N-2}$-gon. 
Then for the quadratic form~$x_{2N-3}^2 + x_{2N-2}^2$, each vertex of this 
polygon is a local maximum and all the values in those points are different. 
\end{proof}

\subsection*{Set of matrices with spectral maximizing product of length 2}

\begin{ex}\label{ex.110}
{\em For $\alpha,\beta\in(-\pi/2,\pi/2)$, $\alpha\ne\beta$, the set $\{T_0,T_1\}$, 
\begin{equation*}
T_0 = 
\begin{pmatrix} 
0 & 0 & 0\ \\ -\sin\, \alpha & \cos\, \alpha & 0\ \\ 
\phantom{-}\cos\, \alpha & \sin \, \alpha & 0\ 
\end{pmatrix},
\quad
T_1 =
\begin{pmatrix}
\ 0 & -\sin\, \beta & \cos\, \beta\ \\ \ 0 & \phantom{-}\cos\, \beta & 
\sin\, \beta\ \\ \ 0 & 0 & 0\ 
\end{pmatrix},
\end{equation*}
has $T_0T_1$ as spectrum maximizing product,
i.e.\ up to permutations and powers the normalized spectral radius 
of all other products of matrices $T_0$ and $T_1$ 
is strictly less than $\rho(T_0T_1)^{1/2}=1$.}
\end{ex}

\newcommand{\doi}[1]{\href{https://doi.org/#1}{doi: #1}}
\newcommand{\arxiv}[1]{\href{https://arxiv.org/abs/#1}{arXiv: #1}}

 \end{document}